\newtheorem{dfn}{Definition}[section]
\newtheorem{thm}[dfn]{Theorem}
\newtheorem{lmma}[dfn]{Lemma}
\newtheorem{ppsn}[dfn]{Proposition}
\newtheorem{crlre}[dfn]{Corollary}
\newtheorem{xmpl}[dfn]{Example}
\newtheorem{rmrk}[dfn]{Remark}
\newtheorem*{question*}{Question}
\numberwithin{equation}{section}
\newcommand{\bbc}{\mathbb{C}}
\newcommand{\bbz}{\mathbb{Z}}
\newcommand{\bbn}{\mathbb{N}}
\newcommand{\bbr}{\mathbb{R}}
\newcommand{\bbt}{\mathbb{T}}
\date{}
\title{ Is Every Product System Concrete?}
\author{S. Sundar}
\begin{document}

\maketitle

\begin{abstract}
Is every product system of Hilbert spaces over a semigroup $P$  concrete, i.e. isomorphic to the product system of an $E_0$-semigroup over $P$? The answer is no if $P$ is discrete, cancellative and does not embedd in a group. However, we show that the answer is yes for a reasonable class of semigroups.

More precisely, let $P$ be a Borel subsemigroup with non-empty interior of a locally compact, second countable, Hausdorff topological group $G$ such that one of the following conditions hold.
\begin{enumerate}
\item[(1)] The group $G$ is discrete and $P$ is normal in $G$, i.e. $gPg^{-1}= P$ for every $g \in G$.
\item[(2)] The group $G$ is abelian.
\item[(3)] The semigroup $P$ is right Ore, i.e. $PP^{-1}$ is a group, and there exists $a \in P$ such that $a$ is an order unit for $P$.
\end{enumerate}
Then, every product system  of Hilbert spaces over $P$ is isomorphic to the product system of an $E_0$-semigroup over $P$.  

We also extend a result of Liebscher. We  prove that, in the setting of abelian subsemigroups of locally compact groups,  any two measurable structures on a product system differ by a character (possibly non-measurable), which in turn implies that two product systems are isomorphic if and only if they are algebraically isomorphic. 

\end{abstract}
\noindent {\bf AMS Classification No. :} {Primary 46L55, Secondary 46L99}  \\
{\textbf{Keywords :}} Product systems, $E_0$-semigroups, Essential and Induced representations.

\tableofcontents

\section{Introduction}

 In his seminal paper (\cite{Arv_Fock}), Arveson introduced product systems to classify $E_0$-semigroups.  A fundamental result of him states that  the class of product systems over $(0,\infty)$ is in bijective correspondence with the class of $E_0$-semigroups over $(0,\infty)$.  From a purely mathematical standpoint, there is no reason to restrict ourselves
to the semigroup $(0,\infty)$. We can replace $(0,\infty)$ by any semigroup with a measurable structure and  still the notion of product systems and the notion of $E_0$-semigroups make perfect sense. 
 It is then  of  interest to know whether Arveson's bijective correspondence holds in general. The purpose of this paper is to pose this question precisely  and to answer it in the affirmative for a reasonably large class of semigroups.
Based on Arveson's ideas (\cite{Arv}), the author in a joint work with Murugan settled the case   when the semigroup involved is a closed convex cone in $\bbr^d$ (\cite{Murugan_sundar_continuous}), or when the semigroup is a finitely generated subsemigroup of $\bbz^d$ (\cite{Murugan_sundar_discrete_mathsci}).  However, the proofs  depend heavily on the fact that the semigroup involved is abelian and has an order unit. It does not work in the non-commutative situation nor does it work for abelian semigroups that does not have an order unit.  The simplest example of an abelian semigroup that does not have an order unit is the multiplicative semigroup of natural numbers. 

Before we define the objects of interest, let us mention that, in the context of $C^{*}$-algebras, Fowler (\cite{Fowler_2002}), inspired by Pimsner's seminal paper (\cite{Pimsner}) and also by the works of Dinh (\cite{Dinh}, \cite{Dinh_Ktheory}), considered the notion of product systems of $C^{*}$-correspondences over discrete semigroups.  In the discrete case, it is well known that higher rank graphs, or more generally $P$-graphs (\cite{Kumjian_higher}, \cite{Sims_Kumjian}, \cite{Yeend}, \cite{Rennie_Sims}, \cite{Renault_Pgraph}), provide a large supply of product system of $C^{*}$-correspondences  and  the resulting product systems are  product system of Hilbert spaces if the vertex set is singleton. There is a considerable amount of   literature available today that deals with the associated $C^{*}$-algebras and  the non self-adjoint ones.  Also, product and subproduct systems play a vital role  in dilation theory as well, and we  refer the reader to \cite{Skeide_Shalit} for more on this topic. Given this  interest on product systems over general monoids,  the author believes that it is natural  to investigate to what extent Arveson's bijection holds.  In this paper, we restrict our attention to product system of Hilbert spaces, and we do not consider product system of $C^{*}$-correspondences.  Similarly, we restrict attention to $E_0$-semigroups on $B(H)$. The reader interested beyond the case of type $\rm {I}$ factors  may  consult \cite{Skeide4}, \cite{Alexi} and \cite{Srinivasan_Oliver1}, where $1$-parameter $E_0$-semigroups on algebras other than $B(H)$ were considered.

\subsection{Statement of the problem.}
In this subsection, we formulate the question and state the problem precisely. The required definitions  are also collected. 
Let $P$ be a semigroup, and let $\mathcal{B}$ be a $\sigma$-algebra of subsets of $P$. We call $P$ a measurable semigroup if the map \[P \times P \ni (x,y) \to xy \in P\] is measurable. The $\sigma$-algebra that we consider on the product $P \times P$ is the product $\sigma$-algebra. If $(P,\mathcal{B})$ is a standard Borel space, then $P$ is said to be a standard Borel semigroup. A standard Borel semigroup $P$ is said to be discrete if $P$ is countable. 
Let $P$ be a standard Borel semigroup which is fixed until further mention.

\begin{dfn}
Let $E:=\{E(x)\}_{x \in P}$ be a field of non-zero separable Hilbert spaces. Suppose that an associative product is defined on the disjoint union $\displaystyle \coprod_{x \in P}E(x)$. Then, $E$ is called an algebraic product system if 
\begin{enumerate}
\item[(1)] for $x,y \in P$, $u \in E(x)$ and $v \in E(y)$, $uv \in E(xy)$, and
\item[(2)] for $x, y \in P$, there exists a unitary  $U_{x,y}:E(x) \otimes E(y) \to  E(xy)$ such that \[U_{x,y}(u \otimes v)=uv\] for $u \in E(x)$ and $v \in E(y)$.
\end{enumerate}

Let $E:=\{E(x)\}_{x \in P}$ be an algebraic product system. Let $\Gamma$ be a set of sections of $E$. We say the pair $(E,\Gamma)$ is a product system if 
\begin{enumerate}
\item[(1)] $E$ is a measurable field of Hilbert spaces with $\Gamma$ being the space of measurable sections, and 
\item[(2)] for $r,s,t \in \Gamma$, the map 
\[
P \times P \ni (x,y) \to \langle r(x)s(y)|t(xy) \in \bbc\]
is measurable. 
\end{enumerate}
\end{dfn}
If $\big(\{E(x)\}_{x \in P},\Gamma)$ is a product system, we usually suppress $\Gamma$ from the notation, and we simply denote the pair $\big(\{E(x)\}_{x \in P},\Gamma \big)$ by $E$.

\begin{dfn}
Let $E=\{E(x)\}_{x \in P}$ and $F=\{F(x)\}_{x \in P}$ be product systems over $P$. We say that $E$ and $F$ are isomorphic if for each $x \in P$, there exists a unitary operator $\theta_x:E(x) \to F(x)$ such that 
\begin{enumerate}
\item[(1)] for $x,y \in P$, $u \in E(x)$ and $v \in E(y)$, $\theta_{xy}(uv)=\theta_x(u)\theta_y(v)$, and
\item[(2)] $\{\theta_x\}_{x \in P}$ is a measurable field of operators.
\end{enumerate}
\end{dfn}

Next, we recall the notion of $E_0$-semigroups. 
\begin{dfn}
Let $H$ be a separable Hilbert space. An $E_0$-semigroup over $P$ acting on $B(H)$ is a semigroup $\alpha:=\{\alpha_x\}_{x \in P}$ of unital, normal $^*$-endomorphisms of $B(H)$ such that 
for every $A \in B(H)$, the map $P \ni x \to \alpha_x(A) \in B(H)$ is weakly measurable, i.e. for every $\xi,\eta \in H$, the map 
\[
P \ni x \to \langle \alpha_x(A)\xi|\eta \rangle \in \bbc\]
is measurable. 
\end{dfn}

Let $\alpha$ and $\beta$ be $E_0$-semigroups over $P$ on $B(H)$. We say $\beta$ is  a \emph{cocycle perturbation} of $\alpha$, and we write $\beta \sim \alpha$, if there exists a family $U:=\{U_x\}_{x \in P}$ of unitary operators such that 
\begin{enumerate}
\item[(1)] the map $P \ni x \to U_x \in B(H)$ is weakly measurable,
\item[(2)] for $x \in P$, $\beta_x(\cdot)=U_x\alpha_x(\cdot)U_x^*$, and
\item[(3)] for $x, y \in P$, $U_{xy}=U_x\alpha_x(U_y)$.
\end{enumerate}
Note that $\sim$ is an equivalence relation on the set of $E_0$-semigroups on $B(H)$. Let $\alpha$ be an $E_0$-semigroup over $P$ on $B(H)$, and let $\beta$ be an $E_0$-semigroup over $P$ on $B(K)$, where $K$ is another separable Hilbert space. We say that $\alpha$ and $\beta$ are \emph{cocycle conjugate} if there exists a unitary operator $U:H \to K$ such that  $\beta$ is a cocycle perturbation of $\{Ad(U) \circ \alpha_x \circ Ad(U^*)\}_{x \in P}$.

 Let $\alpha:=\{\alpha_x\}_{x \in P}$ be an $E_0$-semigroup over $P$ acting on $B(H)$, where $H$ is a separable Hilbert space. 
For $x \in P$, let $E(x)$ be the intertwining space of $\alpha_x$, i.e. 
\[
E(x):=\{T \in B(H): \alpha_x(A)T=TA \textrm{~~for all $A \in B(H)$}\}.\]
Recall that $E(x)$ is a  separable Hilbert space where the inner product is defined by $\langle S|T \rangle=T^*S$.  Let 
\[
\Gamma:=\{f:P \to B(H): f(x) \in E(x) \textrm{~~and $f$ is weakly measurable}\}.\] 
Then, $E_\alpha:=\big(\{E(x)\}_{x \in P},\Gamma \big)$ is a product system over $P$. The product rule on $E_\alpha$ is the usual composition of operators. 
The product system $E_\alpha$ is called \emph{the product system associated with $\alpha$}. Arguing as in the $1$-parameter case, we can prove that the association 
\[
\alpha \to E_\alpha\] is injective. More details for the case of a cone can be found in \cite{Murugan_Sundar}.

\begin{dfn}
A product system over $P$ is said to be \emph{concrete} if it is isomorphic to the product system associated with an $E_0$-semigroup over $P$. 
\end{dfn}

We can now pose the question of interest.

\begin{question*}
 Let $P$ be a standard Borel semigroup. Is every product system over $P$ concrete? 
\end{question*}

It is elementary to prove that a product system is concrete if and only if it has an essential representation on a separable Hilbert space (see \cite{Arveson} for a proof in the $1$-parameter case). 
Let $E$ be a product system over $P$, and let $H$ be a Hilbert space. A  map $\phi:\displaystyle \coprod_{x \in P}E(x) \to B(H)$ is called a representation of $E$ on $H$ if 
\begin{enumerate}
\item[(1)] for $x \in P$, $u,v \in E(x)$, $\phi(v)^*\phi(u)=\langle u|v \rangle$, and
\item[(2)] for $x,y \in P$, $u \in E(x)$ and $v \in E(y)$, $\phi(uv)=\phi(u)\phi(v)$. 
\end{enumerate} 
Let $\phi$ be a representation of $E$ on  a Hilbert space $H$. We say that $\phi$ is measurable if for any measurable section $s$ of $E$, the map 
\[
P \ni x \to \langle \phi(s(x))\xi|\eta \rangle \in \bbc\]
is measurable for every $\xi,\eta \in H$. A representation $\phi$ is said to be essential if for every $x \in P$, $[\phi(E(x))H]=H$ (for a subset $\mathcal{S} \subset B(H)$, $[\mathcal{S}H]$ denotes the closure of the linear span of $\{T\xi: T \in \mathcal{S}, \xi \in H\}$). 

We can rephrase our question as follows.

\begin{question*}
 Let $P$ be a standard Borel semigroup, and let  $E$ be a product system over $P$. Does there exist a separable Hilbert space $H$ and a representation $\phi$ of $E$ on $H$ such that $\phi$ is essential and $\phi$ is measurable? 
 \end{question*}

The answer is no in general. We show that if $P$ is a discrete, cancellative semigroup that does not embedd in a group, then there is  a product system over $P$ that is not concrete. It may be too ambitious to expect a positive answer even for  subsemigroups of groups.  Nevertheless, we  show that the answer is yes for a reasonably good class of semigroups of locally compact groups. 

Before we proceed further, let us mention the examples of semigroups for which the above question is resolved in the literature.  It is known that every product system over a semigroup $P$ has a measurable, essential representation on a separable Hilbert space  if  eithier $P=(0,\infty)$ (\cite{Arv_Fock4}, \cite{Arv}, \cite{Skeide}, \cite{Liebscher}), or if $P$ is a closed, pointed cone in $\bbr^d$ (\cite{Murugan_sundar_continuous}), or if $P$ is a finitely generated subsemigroup of $\bbz^d$ (\cite{Murugan_sundar_discrete_mathsci}). The classical case is when $P=(0,\infty)$ which started the subject. The first proof for the case $P=(0,\infty)$ was due to Arveson (\cite{Arv_Fock4}). Alternate proofs by Liebscher (\cite{Liebscher}), Skeide (\cite{Skeide}, \cite{Skeide1}) and by Arveson (\cite{Arv}) himself are now available in the literature.   We also wish to mention  that if $P$ is discrete and if each fibre of $E$ is finite dimensional, then our problem amounts to asking whether the Cuntz-Pimsner algebra $\mathcal{O}_E$ defined by Fowler in \cite{Fowler_2002} is non-zero. A positive solution in the case when  $P$ is a discrete Ore semigroup   and when the fibres of $E$ are finite dimensional can be found in \cite{Szymanski} or in \cite{Meyer} where the  authors work in the more general framework of  regular/proper $C^{*}$-correspondences. 

\subsection{Main results.}
We next summarise  the results obtained and give a brief overview of the ideas involved in proving them.  The main result of this paper is the following. 
\begin{thm}
\label{main}
Let $G$ be a locally compact, second countable, Hausdorff topological group, and let $P \subset G$ be a Borel subsemigroup with non-empty interior. Assume that  
\begin{enumerate}
\item[(C1)] either the group $G$ is discrete and $P$ is normal in $G$, i.e. $gPg^{-1}=P$ for every $g \in G$, or
\item[(C2)] the group $G$ is abelian, or
\item[(C3)] the set $PP^{-1}$ is a subgroup of $G$, and there exists $a \in P$ such that $a$ is an order unit for $P$, i.e. $\bigcup_{n=1}^{\infty}Pa^{-n}=PP^{-1}$.
\end{enumerate}
Then, every product system  of  Hilbert spaces over $P$ has a representation $\phi$ on a separable Hilbert space such that $\phi$ is measurable and $\phi$ is essential. In particluar, every product system over $P$ is concrete. 
\end{thm}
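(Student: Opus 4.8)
The plan is to use the stated equivalence and produce, for a given product system $E$ over $P$, a measurable essential representation on a separable Hilbert space. The starting point is the \emph{left regular representation}. Fix a left Haar measure $\lambda$ on $G$; since $P$ has non-empty interior, $\lambda(P)>0$ and $P$ carries a genuine $L^2$-theory. Let $H_0:=L^2(P,E,\lambda)$ be the space of square-integrable measurable sections, and for $u\in E(y)$ define $(\phi_0(u)f)(x):=u\,f(y^{-1}x)$ when $y^{-1}x\in P$ and $0$ otherwise; here $u\,f(y^{-1}x)\in E(y)E(y^{-1}x)=E(x)$ via the structure unitaries. A direct computation, using associativity for multiplicativity and the left-invariance of $\lambda$ for the isometry relation $\phi_0(v)^*\phi_0(u)=\langle u|v\rangle$, shows $\phi_0$ is a measurable representation. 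It is, however, never essential: $\phi_0(u)f$ is supported on $yP\subsetneq P$, so $[\phi_0(E(y))H_0]\subseteq L^2(yP,E)$.

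To repair essentiality I would push the ``missing'' part off to infinity by an inductive limit of shifted regular representations along a cofinal chain. Each of the three hypotheses supplies a cofinal sequence $t_1\preceq t_2\preceq\cdots$ in $(P,\preceq)$, where $s\preceq t$ means $s^{-1}t\in P$: under (C3) one takes $t_n=a^n$, the order-unit condition $\bigcup_n Pa^{-n}=PP^{-1}$ being exactly cofinality; under (C2) the semigroup is directed and second countability yields a cofinal sequence; under (C1) discreteness does the same. Writing $t_{n+1}=t_ns_n$, set $E_{t_n}(x):=E(t_nx)$ for $x\in t_n^{-1}P$ and form $H_n:=L^2(t_n^{-1}P,E_{t_n},\lambda)$. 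Choosing unit vectors $\omega_{s_n}\in E(s_n)$, define connecting maps $c_n:H_n\to H_{n+1}$ by $(c_nf)(x)=\omega_{s_n}f(x)$ on $t_n^{-1}P$ and $0$ elsewhere. The decisive point is that these are \emph{isometric} because one tensors with a unit \emph{vector} rather than with an identity operator, so no divergent amplification occurs; and because the index set is a chain there is no cocycle obstruction to the coherence of the $c_n$, so---unlike a directed-set construction---no multiplicative unit of $E$ is required and even unit-less (type $\mathrm{III}$) systems are covered. Let $H_\infty:=\varinjlim_n H_n$, a separable Hilbert space, and let $\phi=\varinjlim_n\phi_n$ be the limit of the shifted regular representations $\phi_n$ on $H_n$.

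Essentiality should now come from the geometry: since the $t_n$ are cofinal and $PP^{-1}$ is a group, the bases increase to $\bigcup_n t_n^{-1}P=PP^{-1}$, and in the limit the $P$-shift underlying $\phi$ becomes genuine left translation of $L^2(PP^{-1})$, which has no boundary across which to lose mass. Concretely, at level $n$ the range of $\phi_n(E(y))$ has support in $t_n^{-1}P\cap yt_n^{-1}P$ and, over each point there, spans the full fibre via the unitary $E(y)\otimes E(y^{-1}\,\cdot\,)\cong E(\,\cdot\,)$; as $n\to\infty$ the overlap exhausts $PP^{-1}$, giving $[\phi(E(y))H_\infty]=H_\infty$. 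Measurability of $\phi$ is inherited from that of $E$, of $\lambda$, and of a measurable unit-vector section, all of which persist through a countable limit.

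The main obstacle is verifying that $\phi_n$ descends consistently to the limit while remaining a representation, and this is exactly where the three hypotheses do unequal amounts of work. In the abelian case (C2) the commutation $ys_n=s_ny$ makes the compatibility $\phi_{n+1}(u)c_n=c_n\phi_n(u)$ immediate, and the isometry and multiplicativity relations pass to the limit verbatim. In the normal discrete case (C1) the same identities fail for the naive shift, and one must twist the translation by the conjugation $gPg^{-1}=P$; the role of normality is precisely to make the shifted fields $E_{t_n}$ transform compatibly under left multiplication so that coherence is restored. The Ore case (C3) is cleanest algebraically but needs the order unit to guarantee both cofinality and that $\bigcup_n t_n^{-1}P$ fills the group. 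Throughout, one must check that separability and measurability survive the inductive limit; granting these, the limit $\phi$ is a measurable essential representation, and by the equivalence recalled above $E$ is concrete.
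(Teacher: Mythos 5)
There is a genuine gap, and it sits at the central step of your construction: the intertwining $\phi_{n+1}(u)c_n=c_n\phi_n(u)$. With your conventions the maps do not even go between the right spaces outside the abelian case: for $f\in H_n$ you have $f(x)\in E(t_nx)$, so $(c_nf)(x)=\omega_{s_n}f(x)$ lies in $E(s_nt_nx)$, whereas the target fibre is $E(t_{n+1}x)=E(t_ns_nx)$; likewise the shifted left-regular action sends $f(y^{-1}x)\in E(t_ny^{-1}x)$ to $u\,f(y^{-1}x)\in E(yt_ny^{-1}x)\neq E(t_nx)$. Even the nesting of the bases fails under (C3): $t_n^{-1}P\subseteq t_{n+1}^{-1}P$ requires $t_ns_nt_n^{-1}\in P$, which is exactly normality, not the Ore/order-unit hypothesis. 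The promised ``twist by conjugation'' for (C1) is never constructed, and it is precisely at this point that the paper has to do real work: it proves that under an order unit the cyclic group $\langle a\rangle$ is \emph{closed} (Prop.\ \ref{order unit}) so that the induced-representation machine of Thm.\ \ref{induced} applies, handles normal discrete $P$ by exhausting it with order-unit subsemigroups and passing through the ultraproduct Theorem \ref{inductive limit}, and in the abelian case constructs a countable cofinal $Q$ with $Q-Q$ closed (Thm.\ \ref{main_abelian}). Worse, your claim that in case (C2) commutativity makes the compatibility ``immediate'' is false: what is needed is $u\,\omega_{s_n}=\omega_{s_n}u$ \emph{in the product system}, and commutativity of $P$ does not make the multiplication of $E$ commutative --- the flip composed with the structure unitaries $U_{x,y}$ is a nontrivial unitary in general (already for exponential/CCR product systems $uv\neq vu$). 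So, as written, the inductive system does not carry the representations under any of the three hypotheses.

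The skeleton is salvageable, but only after switching sides, and then you should notice what the repair does and does not buy. Take bases $Pt_n^{-1}$ with fibres $E(xt_n)$, let the representation act by left translation, $(\phi_n(u)f)(x)=u\,f(y^{-1}x)\in E(y\,y^{-1}xt_n)=E(xt_n)$, and let the connecting isometries act by \emph{right} multiplication, $(c_nf)(x)=f(x)\,\omega_{s_n}\in E(xt_ns_n)=E(xt_{n+1})$; then nesting reads $Ps_n\subseteq P$ and compatibility is an instance of associativity, with no commutation anywhere. In that form your idea is genuinely different in presentation from the paper's proof but close to it in substance: your chain of unit vectors secretly encodes an essential representation of $E$ restricted to the chain, on $\varinjlim\big(E(t_n),\,\cdot\,\omega_{s_n}\big)$, so you are re-deriving a special case of the induction of Thm.\ \ref{induced} while avoiding the quotient $G/K$ and hence the closedness of $QQ^{-1}$. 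But that should make you suspicious rather than pleased: the repaired outline appears to use normality and the order unit only to produce a cofinal chain, which exists for every right Ore $P$ with non-empty interior, while the paper carefully refrains from claiming the general right Ore case (its $ax+b$ example without an order unit is left untouched, and the introduction explicitly doubts a positive answer for general subsemigroups of groups). So either a gap is hiding in the parts you wave at --- the measurable-field structure on the limit over varying Borel bases, the $L^2$-density of $[\phi_n(E(y))H_n]$ in the sections supported on $yPt_n^{-1}$, and the dominated-convergence essentiality argument via $yPt_n^{-1}\uparrow PP^{-1}$ --- or you are claiming strictly more than Thm.\ \ref{main}; a complete proof must resolve this tension explicitly, and your proposal does neither.
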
 
   The main ideas and the sequence of steps involved in the proof are described below.

For a product system $E$ over $(0,\infty)$, Skeide presented a construction in \cite{Skeide} (essentially an induced construction) that allows us to define an essential representation of $E$ starting from an essential representation of $E|_\bbn$. Here, $E|_\bbn$ stands for the product system restricted to $\bbn$. The conclusion in the $1$-parameter case is immediate as it is elementary to see that every product system over $\bbn$ has an essential representation.
  We generalise this trick  which is one 
of the key steps in the proof of Thm. \ref{main}. 

Let  $G$ be a locally compact, second countable, Hausdorff topological group, and let $P \subset G$ be Borel subsemigroup with non-empty interior. Assume that $PP^{-1}$ is a group. Suppose $E$ is a product system over $P$, and $Q$ is a  Borel subsemigroup of $P$. We denote the product system restricted to $Q$ by $E|_Q$. Then, under the assumption that $QQ^{-1}$ is a closed subgroup and $Q$ is cofinal, i.e. $PQ^{-1}=PP^{-1}$, we show that we can induce an essential representation of $E|_Q$ to an essential representation of $E$. Thus, if $E|_Q$ is concrete, then $E$ is concrete.  It is now clear that  under (C3), Thm. \ref{main} follows. To tackle the case when $G$ is abelian, we show the existence of a countable cofinal semigroup $Q$ such that $Q-Q$ is closed. This reduces proving Thm. \ref{main} to the case when $G$ is discrete. 

The second crucial step that allows to prove Thm. \ref{main} under the assumption that $G$ is discrete is the following theorem.

\begin{thm}
\label{inductive limit}
Let $P$ be a countable, discrete  semigroup. Assume that $P$ is left reversible, i.e. for $a,b \in P$, $aP \cap bP \neq \emptyset$. Suppose $(P_n)$ is a sequence of subsemigroups such that $P_n \nearrow P$. Let $E$ be a product system over $P$, and let $E_n$ be the restriction of $E$ to $P_n$. If $E_n$ is concrete for every $n$, then $E$ is concrete. 
\end{thm}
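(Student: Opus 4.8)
The plan is to reformulate concreteness in representation-theoretic terms and then build the desired representation of $E$ as an inductive limit along the exhausting sequence $(P_n)$. Since $P$ is countable and discrete, every representation is automatically measurable, so $E$ is concrete precisely when it admits an essential representation on a separable Hilbert space, and likewise each $E_n$ is concrete means $E_n$ has an essential representation on a separable space. A representation of $E=\bigcup_n E_n$ restricts to a representation of each $E_n$, and these restrictions are coherent; conversely I would like to assemble the given data into one representation. Concretely, I would aim to produce a separable Hilbert space $H=\varinjlim(H_n,W_n)$, where $W_n\colon H_n\to H_{n+1}$ are isometries, together with an essential representation $\rho_n$ of $E_n$ on $H_n$ for each $n$, such that $W_n$ intertwines $\rho_n$ with $\rho_{n+1}|_{E_n}$, i.e. $W_n\rho_n(u)=\rho_{n+1}(u)W_n$ for every $u\in E(x)$ with $x\in P_n$. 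Granting such a coherent tower, the limit Hilbert space $H$ carries a representation $\rho$ of $E$: for $x\in P$ we have $x\in P_m$ for large $m$, and the operators $\rho_n(u)$ ($n\ge m$) are compatible with the $W_n$, so they define $\rho(u)$ on the dense subspace $\bigcup_n W_{n,\infty}(H_n)$. The representation relations and multiplicativity pass to the limit, and essentiality at a fixed $x$ follows from essentiality of $\rho_n$ for $n$ large together with density of $\bigcup_n W_{n,\infty}(H_n)$; separability is inherited since $P$ is countable and the fibres are separable.

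The heart of the argument is the inductive step: given an essential representation $\rho_n$ of $E_n$ on $H_n$, construct $\rho_{n+1}$, $H_{n+1}$ and the intertwining isometry $W_n$. Here I would use that $E_{n+1}$ is concrete, so it has an essential representation $\sigma_{n+1}$ on a separable space $M_{n+1}$. Its restriction $\sigma_{n+1}|_{E_n}$ is an essential representation of $E_n$ whose associated product system is $E_{n+1}|_{E_n}=E_n$, the very same product system carried by $\rho_n$. Passing to the associated $E_0$-semigroups and invoking that the assignment $\alpha\mapsto E_\alpha$ descends to an injection on cocycle-conjugacy classes, the $E_0$-semigroup of $\rho_n$ and that of $\sigma_{n+1}|_{E_n}$ must be cocycle conjugate. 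I would then exploit this: a cocycle perturbation of $\sigma_{n+1}$ is again an essential representation of $E_{n+1}$, and the perturbation data, after replacing $M_{n+1}$ by a suitable (still separable) amplification, is used to manufacture an isometric embedding of $\rho_n$ into the restriction to $E_n$ of the perturbed $\sigma_{n+1}$. This perturbed representation is taken to be $\rho_{n+1}$, and the embedding is $W_n$.

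The step just sketched is where the main obstacle lies, and where left reversibility must do its work. The difficulty is that essential representations of a fixed product system are highly non-unique: already for $P=\bbn$ the single fibre $E(1)$ turns an essential representation into a representation of a Cuntz algebra, which has a vast supply of mutually quasi-inequivalent representations, so one cannot simply appeal to a universal separable absorbing representation into which everything embeds. Thus the cocycle-conjugacy comparison at level $n$ must be upgraded to an honest, separability-preserving isometric intertwiner, and — crucially — these intertwiners must be chosen coherently across all levels so as not to be destroyed at the next stage; this is in effect a cohomological gluing problem for the connecting cocycles. Left reversibility enters precisely here: the relation $a\preceq b\iff b\in aP$ is directed exactly when $aP\cap bP\neq\emptyset$ for all $a,b$, which is what allows the inductive limit to be formed coherently and, together with the cocycle-conjugacy uniqueness, lets the comparison at one level be propagated compatibly to all higher levels. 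The remaining checks — that the limiting family of endomorphisms is unital (equivalently, that $\rho$ stays essential on the directed union) and defines a genuine $E_0$-semigroup over all of $P$ — are then driven by this directedness, and I expect the coherent choice of the gluing data to be the technically demanding part of the proof.
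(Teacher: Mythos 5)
There is a genuine gap, and it sits exactly where you flag it: the inductive step producing the intertwining isometry $W_n$ is never carried out, and the route you sketch for it does not work as stated. Comparing $\rho_n$ with $\sigma_{n+1}|_{E_n}$ via the associated $E_0$-semigroups can, at best, yield a unitary $V:H_n\to M_{n+1}$ and a unitary cocycle $\{C_x\}_{x\in P_n}$ \emph{indexed by $P_n$ only}. Perturbing $\sigma_{n+1}$ by such a cocycle does not give a representation of $E_{n+1}$: multiplicativity $\phi(uv)=\phi(u)\phi(v)$ breaks for products involving elements of $P_{n+1}\setminus P_n$ unless the cocycle extends from $P_n$ to $P_{n+1}$. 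This extension problem is precisely the ``cohomological gluing'' you acknowledge, and no mechanism is offered to solve it; left reversibility only makes the order $a\preceq b \iff b\in aP$ directed, it says nothing about extendability of cocycles, and your own observation about the vast supply of inequivalent representations (already for $P=\bbn$, via Cuntz algebra representations) is evidence that $\rho_n$ and $\sigma_{n+1}|_{E_n}$ need not be related by any intertwiner compatible with the next level. A secondary soft spot: the input ``isomorphic product systems $\Rightarrow$ cocycle conjugate $E_0$-semigroups'' is not established in this paper in the generality of arbitrary discrete semigroups (only injectivity of $\alpha\mapsto E_\alpha$ is invoked), though this is the lesser issue.

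The paper's proof is designed to avoid this coherence problem altogether, and it is worth seeing how. Instead of a direct limit, it takes arbitrary essential representations $\phi_n$ of $E_n$ on $H_n$ with \emph{no compatibility whatsoever}, fixes a state $\omega$ on $\ell^{\infty}(\bbn)$ vanishing on $c_0(\bbn)$, and forms the ultraproduct Hilbert space $K$ from bounded sequences $(\xi_n)_n$, $\xi_n\in H_n$. Since two sequences agreeing eventually give the same class, the formula $\phi(u)[\xi]=[(\phi_n(u)\xi_n)_n]$ (defined for $n$ large enough that $u\in E(x)$, $x\in P_n$) is a well-defined representation of all of $E$ on $K$ --- no intertwiners are needed. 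Essentiality is then not proved globally but recovered by cutting down: left reversibility is used exactly to produce an increasing cofinal sequence $s_n=t_1t_2\cdots t_n$ with $t_k\in P_k$, and choosing unit vectors $u_k\in E(t_k)$ and $\eta_n=\phi_n(u_1\cdots u_n)\xi_n$ exhibits a single unit vector $[\eta]$ in the essential part $K_\infty=\bigcap_{n\geq 1}\phi(E(s_n))K$; one then restricts to $K_\infty$ (Remark \ref{the most crucial remark}) and to a separable reducing subspace (Lemma \ref{reduction}). If you want to salvage your approach, you would need to solve the cocycle-extension problem for each inclusion $P_n\subset P_{n+1}$ and make the solutions coherent, which appears to require substantially new ideas; the ultraproduct trades all of that for the mild price of a possibly non-separable $K$, which Lemma \ref{reduction} then removes.
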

The proof of Thm. \ref{inductive limit} is nice application of an ultraproduct construction.  With the notation of Thm. \ref{inductive limit}, let $\phi_n$ be an essential representation of $E_n$ on a separable Hilbert space $H_n$.   Choose a bounded linear functional $\omega:\ell^{\infty}(\bbn) \to \bbc$ that vanishes on $c_0(\bbn)$. Let $H_\infty$ be the set of norm bounded sequences $(\xi_n)_n$ with $\xi_n \in H_n$. The formula 
\[
\langle (\xi_n)_n|(\eta_n)_n\rangle: =\omega ((\langle \xi_n|\eta_n \rangle)_n)\]
defines a semi-definite inner product on $H_\infty$ whose completion we still denote by $H_\infty$.   Morever, the formula 
\[
\phi(u)((\xi_n)_n)=(\phi_n(u)\xi_n)_n\]
defines a representation of $E$ on $H_\infty$. We show that while $H_\infty$ may not be separable, it has a separable reducing subspace on which $\phi$ is essential.

With Thm. \ref{inductive limit} in hand, the proof of Thm. \ref{main} is completed as follows. As already observed, we are only left with proving Thm. \ref{main} when $G$ is discrete and $P$ is a normal subsemigroup. In this case, we show that we can write $P$ as an increasing union $ \bigcup_{n \geq 1}P_n$ where each $P_n$ is a normal subsemigroup for which Condition (C3) of Thm. \ref{main} is satisfied. As we have already seen  that Thm. \ref{main} holds under (C3), we can apply Thm. \ref{inductive limit} to finish the proof.

We end our paper with a non-example and by extending a result of Liebscher. 
For a non-example, let $P$ be a discrete, countable, cancellative semigroup that does not embedd in a group.  It was  Malcev (\cite{Malcev}) who first showed the existence of such a semigroup.  Let $E$ be the product system of the  CCR flow associated with the left regular representation of $P$ on $\ell^2(P)$. The opposite $E^{op}$ of $E$ is then a product system over the opposite semigroup $P^{op}$, and we show that $E^{op}$ is not concrete. We conclude by extending a result of Liebscher, by proving that, in the setting of abelian subsemigroups of locally compact abelian groups, two product systems are isomorphic if and only if they are algebraically isomorphic. 
 The proof is a nice application of our main result and is inspired by a trick  that Arveson  (\cite{Arv_Spectral1990}) exploited to show that every $1$-parameter $E_0$-semigroup admits an extension to a group of automorphisms.   It is irresistible for me not to record this trick, and I believe that the proof has some elegance attached to it. 


\textit{Notation:} For us, $\bbn=\{1,2,\cdots \}$ and $\bbn_0:=\bbn \cup \{0\}$. 


\section{Induced representations}

As mentioned earlier, the inspiration for this section comes from Skeide's work (\cite{Skeide}). Liebscher's paper (\cite{Liebscher}) is yet another motivation for the author especially  to consider the Hilbert space $H_{[x]}$ that appears below. 

For the rest of this section, let $G$ be an arbitrary locally compact, second countable, Hausdorff  topological group, and let $P \subset G$ be a Borel subsemigroup, i.e. $P$ is  a Borel set which is also a subsemigroup.  We assume that $P$ is right Ore, i.e. $PP^{-1}$ is a subgroup. This ensures that $P$ is left reversible, i.e. given $a,b \in P$, $aP \cap bP \neq \emptyset$.  
 We  assume that $P$  has non-empty interior. Since $P$ has non-empty interior, $PP^{-1}=Int(P)Int(P)^{-1}$.  Consequently, $PP^{-1}$ is an open subgroup. Thus, we may assume $PP^{-1}=G$. 

 For $x, y \in P$, we say $x \leq y$ if there exists $a \in P$ such that $y=xa$.  Since $P$ is left reversible, given $x,y \in P$, there exists $z \in P$ such that $z \geq x$ and $z \geq y$. 
A subset $F \subset P$ is said to be cofinal if given $x \in P$, there exists $f \in F$ such that $x \leq f$. Since $PP^{-1}=G$, if $F \subset P$ is cofinal, then  it is clear that given $x \in G$, there exists $f \in F$
such that $f \ge x$, i.e. $x^{-1}f \in P$.

\begin{ppsn}
There exists a sequence $(s_n) \in Int(P)$ such that $\{s_n: n \in \bbn\}$ is cofinal  and $s_n \leq s_{n+1}$ for every $n$. 
\end{ppsn}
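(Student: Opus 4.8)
The plan is to produce the sequence in two stages: first extract a countable cofinal subset of $Int(P)$, and then thread an increasing chain through it using left reversibility, taking care to remain inside $Int(P)$ at every step. The starting observation is that left translations are homeomorphisms of $G$, so for any $w \in P$ the set $w\,Int(P)$ is open; since $w\,Int(P) \subseteq wP \subseteq P$, an open subset of $P$ must lie in $Int(P)$, whence $w\,Int(P) \subseteq Int(P)$ for every $w \in P$. In particular, fixing any $u \in Int(P)$, for each $x \in P$ we have $xu \in Int(P)$ and $x \le xu$, so $Int(P)$ is itself cofinal.

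Next I would fix a countable cofinal set. Because $G$ is second countable, the subspace $Int(P)$ is separable; let $D = \{d_n : n \in \bbn\} \subseteq Int(P)$ be dense. I claim $D$ is cofinal: given $x \in P$, the set $x\,Int(P)$ is a non-empty open subset of $Int(P)$ by the previous step, so density gives some $d_n \in x\,Int(P)$, and writing $d_n = xa$ with $a \in Int(P) \subseteq P$ yields $x \le d_n$. Thus every $x \in P$ is dominated by a member of $D$.

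Finally I would build the chain by recursion. Fix once and for all a point $u_0 \in Int(P)$, and set $s_1 := d_1 u_0$, which lies in $Int(P)$ by the first step and satisfies $d_1 \le s_1$. Assuming $s_n \in Int(P)$ with $d_n \le s_n$ has been defined, left reversibility supplies a point $w \in s_nP \cap d_{n+1}P$, so that $s_n \le w$ and $d_{n+1} \le w$; I then put $s_{n+1} := w u_0$. Once more $s_{n+1} \in Int(P)$ since $w \in P$, and $w \le s_{n+1}$, so by transitivity of $\le$ both $s_n \le s_{n+1}$ and $d_{n+1} \le s_{n+1}$ hold. The resulting sequence is increasing and lies in $Int(P)$, and it is cofinal: for $x \in P$, cofinality of $D$ gives $m$ with $x \le d_m \le s_m$.

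The step demanding the most care — and the only genuine obstacle — is remaining inside the interior while simultaneously arranging monotonicity and cofinality. The relation $\le$ is merely a preorder (the identity of $G$ need not belong to $P$, so $\le$ need not even be reflexive), and a representative produced by left reversibility need not be an interior point. Multiplying on the right by the fixed interior element $u_0$ repairs both issues uniformly, using only the inclusion $w\,Int(P) \subseteq Int(P)$ for $w \in P$; everything else reduces to the elementary bookkeeping of transitivity and density.
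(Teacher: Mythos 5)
Your proof is correct and takes essentially the same two-step route as the paper: use second countability to extract a countable cofinal subset of $Int(P)$, then thread an increasing sequence through it by left reversibility. The only implementation differences are minor: the paper obtains the countable cofinal set as a countable subcover (Lindel\"of) of the open cover of $G$ by translates of $Int(P)$, where you use a dense sequence in $Int(P)$, and the paper invokes left reversibility of $Int(P)$ itself to choose each $s_n$ directly in the interior, where you apply reversibility of $P$ and then right-multiply by the fixed interior element $u_0$ to land back in $Int(P)$.
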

\textit{Proof.} By assumption, $\{Int(P) a^{-1}\}_{a \in Int(P)}$ is an open cover of $G$. Since $G$ is second countable,  there exists a sequence $(t_n)$ in $Int(P)$ such that $G=\bigcup_{n=1}^{\infty}Int(P) t_n^{-1}$, i.e $\{t_n:n \in \bbn\}$ is cofinal. Set $s_1=t_1$ and define $s_n$ inductively as follows: for $n \in \bbn$, pick $s_n \in Int(P)$ such that $s_n \geq t_n$  and $s_n \geq s_{n-1}$. 
This is possible since $Int(P)$ is left reversible. By definition, $(s_n)$ is increasing.  Since $s_n \geq t_n$ and $\{t_n: n \in \bbn\}$ is cofinal, it follows that $\{s_n: n \geq 1\}$ is cofinal. \hfill $\Box$ 

 For a closed subgroup $K$, we denote the homogeneous space of left cosets of $K$ by $G/K$. For $x \in G$, we denote the left coset $xK$ by $[x]$. 

\begin{ppsn}
Let $Q \subset P$ be a Borel subsemigroup such that $Q$ is cofinal. Suppose $K:=QQ^{-1}$ is a closed subgroup of $G$. 
Then, the quotient map $\pi:G \to G/K$ has a Borel cross section $\kappa:G/K \to G$ such that $\kappa(G/K) \subset P$.
\end{ppsn}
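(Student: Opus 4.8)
The plan is to produce the section not on all of $P$ but on its interior, exploiting the fact that the restriction of $\pi$ to an open set is again an open map. Concretely, I would first reduce the statement to the single assertion that $\pi(Int(P))=G/K$, i.e. that every left coset $xK$ meets the open set $Int(P)$. Once this is known, the map $\pi|_{Int(P)}\colon Int(P)\to G/K$ is a continuous, open, surjective map between Polish spaces (note that $Int(P)$ is an open, hence Polish, subset of the locally compact second countable group $G$, and $G/K$ is locally compact, second countable and Hausdorff since $K$ is closed), and a classical selection theorem for open maps then yields a Borel section.

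The geometric heart of the argument is therefore the claim that every coset meets $Int(P)$. I would begin with two elementary absorption facts: since right and left translations are homeomorphisms of $G$ and $P$ is a semigroup, one has $Int(P)\,P\subseteq Int(P)$ and $P\,Int(P)\subseteq Int(P)$ (if $V\subseteq P$ is open and $b\in P$, then $Vb\subseteq Pb\subseteq P$ is open, and symmetrically $bV\subseteq P$ is open). Using these I would first check that $Q$ itself meets the interior: fixing any $u\in Int(P)$ and applying the cofinality of $Q$ to $u$ yields $q_0\in Q$ with $u^{-1}q_0\in P$, so that $q_0=u\,(u^{-1}q_0)\in Int(P)\,P\subseteq Int(P)$; hence $q_0\in Q\cap Int(P)$.

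With such a $q_0$ in hand, the general case follows quickly. Given $x\in G$, apply the cofinality of $Q$ to the element $x^{-1}$ to obtain $f\in Q$ with $xf\in P$. Then
\[
x\,(fq_0)=(xf)\,q_0\in P\cdot Int(P)\subseteq Int(P),
\]
while $fq_0\in Q\subseteq QQ^{-1}=K$ (recall $q=(qq')q'^{-1}\in QQ^{-1}$ for any $q,q'\in Q$). Thus $x(fq_0)$ is a representative of the coset $xK$ lying in $Int(P)$, which proves $\pi(Int(P))=G/K$.

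Finally I would invoke the selection theorem for open maps: a continuous open surjection between Polish spaces admits a Borel section (this is classical and can, if desired, be proved directly by a Souslin-scheme construction producing a section of Baire class one). Applying it to $\pi|_{Int(P)}$ produces a Borel map $\kappa\colon G/K\to Int(P)$ with $\pi\circ\kappa=\mathrm{id}$, and since $Int(P)\subseteq P$ this $\kappa$ is the desired cross section. I expect the main obstacle to be the insistence on a genuinely Borel (rather than merely universally measurable) section whose image lies in $P$: the bare map $\pi|_P$ is only a Borel surjection of standard Borel spaces with uncountable fibres, for which one cannot in general select in a Borel fashion, so the decisive maneuver is to shrink to the open interior—at the price of proving that $Int(P)$ still surjects onto $G/K$, which is exactly where cofinality of $Q$ and the absorption properties of $P$ enter.
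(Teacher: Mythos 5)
Your proof is correct, but it takes a genuinely different route from the paper's. The paper never proves that every coset meets $Int(P)$ and never invokes a selection theorem for open maps; instead it starts from an arbitrary Borel cross section $\kappa_0:G/K \to G$ (quoted from Thm.\ 3.4.1 of Arveson's book) and then \emph{corrects} it into $P$: it chooses an increasing sequence $t_n \in Q$ with $\bigcup_{n}Pt_n^{-1}=G$, partitions $G$ into the Borel sets $A_1:=Pt_1^{-1}$ and $A_k:=Pt_k^{-1}\setminus Pt_{k-1}^{-1}$, defines the Borel map $\phi(x):=xt_k$ for $x \in A_k$ (which lands in $P$ and, since $t_k \in Q \subseteq K$, does not change the left coset), and sets $\kappa:=\phi\circ\kappa_0$. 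You replace this correction by the geometric lemma $\pi(Int(P))=G/K$, whose proof via the absorption properties $Int(P)\,P\subseteq Int(P)$ and $P\,Int(P)\subseteq Int(P)$, together with the extension of cofinality of $Q$ to all of $G$ (legitimate here because the section's standing normalization $PP^{-1}=G$ is in force), is sound; you then apply a measurable selection theorem to the continuous open surjection $\pi|_{Int(P)}$ between Polish spaces — this is valid, e.g.\ via Kuratowski--Ryll-Nardzewski, since the fibres $xK \cap Int(P)$ are nonempty closed subsets of the Polish space $Int(P)$ and $\pi(U)$ is open for open $U$. Comparing the two: the paper's argument is lighter in descriptive-set-theoretic input, needing only the standard cross section for $G \to G/K$ plus elementary Borel bookkeeping, and it uses the cofinal sequence directly; your argument leans on heavier selection machinery but yields the marginally stronger conclusion that $\kappa$ can be taken with values in $Int(P)$, and it isolates a reusable geometric fact (every coset of $K$ meets $Int(P)$) that the paper records only in the weaker form that every coset meets $P$.
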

\begin{proof}
 Let $(s_n) \in Int(P)$ be a cofinal sequence and for each $n \in \bbn$, let $t_n \in Q$ be such that $t_n \geq s_n$. Then, $\displaystyle \bigcup_{n=1}^{\infty}Pt_n^{-1}=G$.  We can also ensure that $t_n \leq t_{n+1}$ for every $n$. Set $A_1:=Pt_{1}^{-1}$, and let $A_k:=Pt_{k}^{-1} \backslash Pt_{k-1}^{-1}$ for $k \geq 2$. Then, 
$G=\displaystyle \coprod_{k=1}^{\infty}A_k$. 

Define a map $\phi:G \to P$ by setting $\phi(x)=xt_k$ if $x \in A_k$. Then, $\phi$ is a Borel map. 
Let $\kappa_0: G/K \to G$ be a Borel cross section. For the existence of such a cross section, we refer the reader to Thm. 3.4.1 of \cite{Arveson_invitation}. 
Define $\kappa:=\phi \circ \kappa_0$. Then, $\kappa$ is a Borel cross section of the quotient map $\pi:G \to G/K$ such that the range of $\kappa$ is contained in $P$. The proof is complete. \end{proof}

Let $E:=\{E(x)\}_{x \in P}$ be a product system over $P$. 
For $x,y \in P$, denote the unitary operator $E(x) \otimes E(y) \ni u \otimes v  \to uv \in E(xy)$ by $U_{x,y}$.
Let $Q \subset P$ be a cofinal Borel subsemigroup such that $K:= QQ^{-1}$ is a closed subgroup of $G$. 
Let $\sigma$ be an essential representation of $E|_Q$ on a separable Hilbert space $H$. Further, assume
that $\sigma$ is measurable. 
For $a \in Q$, let $\sigma_a:E(a) \otimes H \to H$ be the unitary operator defined by 
\[
\sigma_a(u \otimes \xi)=\sigma(u)\xi.\]
For $[x] \in G/K $, define \[
\Delta([x]):=\coprod_{y \in P, [y]=  [x]}E(y)\otimes H.\]
Note that since $Q$ is cofinal, given $x \in G$, there exists $y \in P$ such that $[y]=[x]$. Hence, $\Delta([x)]$ is non-empty. 
For $y,z \in [x] \cap P$, $u \in E(y) \otimes H$ and $v \in E(z) \otimes H$, we say $u \sim v$ if there exists $a,b \in Q$ such that $ya=zb$ and 
\[
(U_{y,a} \otimes 1)(1 \otimes \sigma_a^*)u=(U_{z,b} \otimes 1)(1 \otimes \sigma_b^*)v.\]

\begin{ppsn}
\label{equivalencegoat}
With the above notation, the relation $\sim$ on $\Delta([x])$ is an equivalence relation for every $[x] \in G/K$. 
\end{ppsn}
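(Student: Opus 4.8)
The plan is to isolate a single ``cocycle'' identity satisfied by the maps implicit in the definition of $\sim$, and then read off the three axioms of an equivalence relation, with transitivity as the only substantial point. For $y \in [x]\cap P$ and $a \in Q$, write
\[
\beta_{y,a} := (U_{y,a}\otimes 1_H)(1_{E(y)}\otimes \sigma_a^*): E(y)\otimes H \to E(ya)\otimes H ,
\]
so that, by definition, $u\sim v$ for $u\in E(y)\otimes H$ and $v\in E(z)\otimes H$ means precisely that there are $a,b\in Q$ with $ya=zb$ and $\beta_{y,a}u=\beta_{z,b}v$. Each $\beta_{y,a}$ is a composite of unitaries, hence unitary. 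Note also that $Q\subset QQ^{-1}=K$, so $[ya]=[y]=[x]$ for every $a\in Q$; thus $ya\in[x]\cap P$ and the maps $\beta_{ya,\cdot}$ are again of this form.

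The heart of the argument is the identity $\beta_{ya,a'}\circ\beta_{y,a}=\beta_{y,aa'}$ for $y\in[x]\cap P$ and $a,a'\in Q$. I would derive it from two ingredients. First, associativity of the product system gives $U_{ya,a'}(U_{y,a}\otimes 1_{E(a')})=U_{y,aa'}(1_{E(y)}\otimes U_{a,a'})$ on $E(y)\otimes E(a)\otimes E(a')$. Second, since $\sigma$ is a representation and $Q$ is a subsemigroup, the relation $\sigma(uv)=\sigma(u)\sigma(v)$ for $u\in E(a)$, $v\in E(a')$ translates into $\sigma_{aa'}(U_{a,a'}\otimes 1_H)=\sigma_a(1_{E(a)}\otimes\sigma_{a'})$, equivalently $(1_{E(a)}\otimes\sigma_{a'}^*)\sigma_a^*=(U_{a,a'}^*\otimes 1_H)\sigma_{aa'}^*$. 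Substituting the second relation into $\beta_{ya,a'}\beta_{y,a}$ and rearranging tensor legs by the first yields $\beta_{y,aa'}$; this is a routine, if notation-heavy, diagram chase, which I would present as such rather than expanding in coordinates.

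With this identity in hand the rest is formal. Reflexivity holds because $Q$ is cofinal, hence non-empty: any $a\in Q$ with $b=a$ witnesses $u\sim u$. Symmetry is immediate, the witnesses $(a,b)$ for $u\sim v$ serving as witnesses $(b,a)$ for $v\sim u$. For transitivity, suppose $u\sim v$ and $v\sim w$ with $u\in E(y)\otimes H$, $v\in E(z)\otimes H$, $w\in E(t)\otimes H$; fix witnesses $a,b\in Q$ with $ya=zb$, $\beta_{y,a}u=\beta_{z,b}v$, and $c,d\in Q$ with $zc=td$, $\beta_{z,c}v=\beta_{t,d}w$. Since $QQ^{-1}=K$ is a group, $Q$ is right Ore and hence left reversible, so $bQ\cap cQ\neq\emptyset$; pick $r,s\in Q$ with $br=cs$. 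Applying the unitary $\beta_{ya,r}=\beta_{zb,r}$ to the first equality and using the cocycle identity gives $\beta_{y,ar}u=\beta_{z,br}v$, and applying $\beta_{zc,s}=\beta_{td,s}$ to the second gives $\beta_{z,cs}v=\beta_{t,ds}w$. As $br=cs$, the middle terms agree, so $\beta_{y,ar}u=\beta_{t,ds}w$; and $y(ar)=(ya)r=(zb)r=z(br)=z(cs)=(zc)s=(td)s=t(ds)$ with $ar,ds\in Q$, so these are legitimate witnesses and $u\sim w$.

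The main obstacle is the cocycle identity of the second paragraph: it is the sole place where associativity of $E$ and the multiplicativity of $\sigma$ are invoked, and the delicate part is the bookkeeping of the tensor legs, keeping straight which copy of $H$ and which fibre each factor acts on. Everything after that—including the use of left reversibility of $Q$ to manufacture the common refinement $br=cs$—is purely formal.
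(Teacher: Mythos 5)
Your proof is correct, and its engine is the same identity the paper uses: your cocycle relation $\beta_{ya,c}\circ\beta_{y,a}=\beta_{y,ac}$ is exactly the paper's Eq.\ (2.1) (the displayed identity labelled \texttt{goat}), derived there by the same appeal to associativity of the product and multiplicativity of $\sigma$, and your refinement of witnesses via left reversibility of $Q$ mirrors the paper's step of replacing $(a,b,c,d)$ by $(a\alpha,b\alpha,c\beta,d\beta)$ with $b\alpha=c\beta$. Where you genuinely diverge is in the packaging. The paper first defines, for each pair $y,z\in[x]\cap P$, a unitary $W_{z,y}:=(1_{E(z)}\otimes\sigma_b)(U_{z,b}\otimes 1_H)^*(U_{y,a}\otimes 1_H)(1_{E(y)}\otimes\sigma_a)^*$ (in your notation $\beta_{z,b}^{*}\beta_{y,a}$), proves it is \emph{well defined} independently of the witnesses $(a,b)$ --- this is the bulk of the paper's proof, using the Ore property to compare two pairs of witnesses --- and then verifies that the family satisfies $W_{z,z}=1$, $W_{z,y}^{*}=W_{y,z}$ and $W_{z_1,z_2}W_{z_2,z_3}=W_{z_1,z_3}$, from which the equivalence-relation axioms drop out. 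You skip the well-definedness entirely: since $\sim$ is defined by mere \emph{existence} of witnesses, you only need to produce witnesses for $u\sim w$, which your direct refinement argument does; this makes your proof of the proposition itself leaner. The trade-off is that the paper's stronger statement is not gratuitous: the representative-independence of $W_{z,y}$ is precisely what is used immediately afterwards to show that the vector space and inner product structure imposed on $H_{[x]}$ via the identification $q\circ i:E(y)\otimes H\to H_{[x]}$ does not depend on the chosen $y\in[x]\cap P$. So your route establishes the proposition as stated, but if you were rewriting the section you would still need the well-definedness computation (or could extract it from your cocycle identity by the same refinement trick) before constructing $H_{[x]}$.
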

\textit{Proof.}
  For a Hilbert space $\mathcal{L}$, we denote the identity operator on $\mathcal{L}$ by $1_{\mathcal{L}}$.
 Let $[x] \in G/K$. Let $y,z \in [x] \cap P$. Choose $a,b \in Q$ such that $ya=zb$. Define 
\[
W_{z,y}:=(1_{E(z)} \otimes \sigma_b)(U_{z,b} \otimes 1_H)^*(U_{y,a} \otimes 1_H)(1_{E(y)} \otimes \sigma_a)^{*}.\]
We claim that $W_{z,y}$ does not depend on $a,b$.

Suppose $\alpha,\beta \in Q$ are such that $y\alpha=z\beta$. Since $a^{-1}\alpha \in QQ^{-1}$, choose $c,d \in Q$ such that $a^{-1}\alpha=cd^{-1}$. Note that \[cd^{-1}=a^{-1}\alpha=(ya)^{-1}(y\alpha)=(zb)^{-1}z\beta=b^{-1}\beta.\] Hence, 
$ac=\alpha d$ and $bc=\beta d$. 
Using the fact that multiplication on $E$ is associative and the fact that $\sigma$ is a representation of $E|_Q$, we  calculate as follows to observe that 
\begin{align*}
&(U_{y,ac} \otimes 1_H)(1_{E(y)}\otimes \sigma_{ac}^{*})\\&=(U_{y,ac} \otimes 1_H)(1_{E(y)}\otimes \big(U_{a,c} \otimes 1_H)(1_{E(a)}\otimes \sigma_c^*)\sigma_a^*\big)\\
&=(U_{y,ac}\otimes 1_H)(1_{E(y)}\otimes U_{a,c} \otimes 1_H)(1_{E(y)}\otimes (1_{E(a)} \otimes \sigma_c^*)\sigma_a^*)\\
&=(U_{ya,c} \otimes 1_H)(U_{y,a} \otimes 1_{E(c)} \otimes 1_H)(1_{E(y)}\otimes 1_{E(a)} \otimes \sigma_c^*)(1_{E(y)} \otimes \sigma_a^*)\\
&=(U_{ya,c} \otimes 1_H)(U_{y,a}\otimes \sigma_c^*)(1_{E(y)}\otimes \sigma_a^*).
\end{align*}
Hence, 
\begin{equation}
\label{goat}
(U_{y,ac} \otimes 1_H)(1_{E(y)}\otimes \sigma_{ac}^{*})=(U_{ya,c} \otimes 1_H)(U_{y,a}\otimes \sigma_c^*)(1_{E(y)}\otimes \sigma_a^*).
\end{equation}
Similarly, 
\begin{equation}
\label{goat1}
(U_{z,bc} \otimes 1_H)(1_{E(z)}\otimes \sigma_{bc}^{*})=(U_{zb,c} \otimes 1_H)(U_{z,b}\otimes \sigma_c^*)(1_{E(z)}\otimes \sigma_b^*).
\end{equation}
Eq. \ref{goat}, Eq. \ref{goat1} and the fact that $ya=zb$ imply that 
\begin{equation}
\label{goat2}
(1_{E(z)} \otimes \sigma_b)(U_{z,b} \otimes 1_H)^*(U_{y,a} \otimes 1_H)(1_{E(y)} \otimes \sigma_a)^{*}
=(1_{E(z)} \otimes \sigma_{bc})(U_{z,bc} \otimes 1_H)^*(U_{y,ac} \otimes 1_H)(1_{E(y)} \otimes \sigma_{ac})^{*}
\end{equation}
Similarly, 
\begin{equation}
\label{goat3}
(1_{E(z)} \otimes \sigma_\beta)(U_{z,\beta} \otimes 1_H)^*(U_{y,\alpha} \otimes 1_H)(1_{E(y)} \otimes \sigma_\alpha)^{*}=(1_{E(z)} \otimes \sigma_{\beta d})(U_{z,\beta d} \otimes 1_H)^*(U_{y,\alpha d} \otimes 1_H)(1_{E(y)} \otimes \sigma_{\alpha d})^{*}
\end{equation}
Since $ac=\alpha d$ and $bc=\beta d$, it follows from Eq. \ref{goat2} and Eq. \ref{goat3} that the operator $W_{z,y}$ is well defined. It is clear that $W_{z,y}$ is a unitary operator.

We claim that the family of operators $\{W_{z,y}\}_{z,y \in [x] \cap P}$ satisfies the following properties. 
\begin{enumerate}
\item[(C1)] For $z \in [x]$, $W_{z,z}=1$,
\item[(C2)] for $z, y \in [x]$, $W_{z,y}^{*}=W_{y,z}$, and 
\item[(C3)] for $z_1,z_2,z_3 \in [x]$,
\[
W_{z_1,z_2}W_{z_2,z_3}=W_{z_1,z_3}.\]
\end{enumerate}

Condition (C1) and (C2) follow from definition. To see (C3), let 
$z_1,z_2,z_3 \in [x] \cap P$ be given. Choose $a,b,c,d \in Q$ such that $z_1a=z_2b$ and $z_2c=z_3d$. We can ensure $b=c$. Otherwise, choose $\alpha,\beta \in Q$ such that $b\alpha=c\beta$, and replace $a,b,c,d$  by $a\alpha,b\alpha,c\beta, d\beta$ respectively. Then, it is clear from the definition that 
\[
W_{z_1,z_2}W_{z_2,z_3}=W_{z_1,z_3}.\]
This proves the claim. It is now immediate that $\sim$ is an equivalence relation. \hfill $\Box$. 

Denote the set of equivalence classes of $\Delta([x])$ by $H_{[x]}$. Then, $H_{[x]}$ has a natural vector space structure and also has an inner product that makes $H_{[x]}$ a separable Hilbert space. 
To see this, pick $y \in [x] \cap P$. Let $i:E(y) \otimes H \to \Delta([x])$ be the `natural inclusion' map, and let $q:\Delta([x]) \to H_{[x]}$ be the quotient map. Then, $q \circ i:E(y) \otimes H \to H_{[x]}$ is a bijective map. We impose the vector space structure and the inner product structure on $H_{[x]}$ after identifying it with $E(y) \otimes H$ via the map $q \circ i$.  Then, the vector space structure as 
well as the inner product is independent of the chosen representative.

For $a \in P$, $u \in E(a)$ and $[x] \in G/K$, let  $\theta([x], u):H_{[x]} \to H_{[ax]}$ be the bounded operator  defined by 
\[
\theta([x], u)[v \otimes \xi]=[uv \otimes \xi].\]
Note that for $a,b \in P$, $u \in E(a)$ and $v \in E(b)$, 
\begin{equation}
\label{associativity}
\theta([bx],u)\theta([x],v)=\theta([x],uv).
\end{equation}
Also, for $a \in P$, $u,v \in E(a)$,
\begin{equation}
\label{inner product preserving}
\theta([x],v)^*\theta([x],u)=\langle u|v \rangle.\end{equation}

\begin{ppsn}
\label{essential}
Let $a \in P$, and  let $\{e_i\}_{i=1}^{d}$ be an orthonormal basis for $E(a)$, where $d:=\dim E(a)$. Then, for every $[x] \in G/K$, 
\[
\sum_{i=1}^{d}\theta([x],e_i)\theta([x],e_i)^*=1.\]
\end{ppsn}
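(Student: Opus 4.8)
The plan is to fix a coset $[x]$, transport everything to concrete tensor products via the bijections $q\circ i$ established above, and observe that under a coherent choice of representatives the operator $\theta([x],u)$ becomes a left-multiplication operator tensored with $1_H$. The desired identity then collapses to the completeness relation $\sum_i|e_i\rangle\langle e_i|=1_{E(a)}$ conjugated by a unitary. As a preliminary remark, I would note from \eqref{inner product preserving} that $\theta([x],e_j)^*\theta([x],e_i)=\langle e_i|e_j\rangle=\delta_{ij}$, so the $\theta([x],e_i)$ are isometries of $H_{[x]}$ into $H_{[ax]}$ with pairwise orthogonal ranges; hence $\sum_{i}\theta([x],e_i)\theta([x],e_i)^*$ is the orthogonal projection onto the closed span of these ranges, and the assertion is equivalent to saying this projection is all of $H_{[ax]}$.

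To make the identification, I would choose $y\in [x]\cap P$ (this set is non-empty, as noted above since $Q$ is cofinal). Because $[y]=[x]$ we have $x^{-1}y\in K$, and therefore $(ax)^{-1}(ay)=x^{-1}y\in K$, so that $ay\in[ax]\cap P$. Using $y$ and $ay$ as representatives, I identify $H_{[x]}\cong E(y)\otimes H$ and $H_{[ax]}\cong E(ay)\otimes H$ through the bijections $q\circ i$ described before the statement. Under these identifications, the defining formula $\theta([x],u)[v\otimes\xi]=[uv\otimes\xi]$ becomes the operator $E(y)\otimes H\to E(ay)\otimes H$, $v\otimes\xi\mapsto (uv)\otimes\xi$. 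Writing $uv=U_{a,y}(u\otimes v)$, this is precisely $L_u\otimes 1_H$, where $L_u\colon E(y)\to E(ay)$ is given by $L_u(v)=U_{a,y}(u\otimes v)$.

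The remaining computation is then short. Since $L_u=U_{a,y}\circ T_u$ with $T_u\colon E(y)\to E(a)\otimes E(y)$, $T_u(v)=u\otimes v$, one checks $T_{e_i}T_{e_i}^*=P_i\otimes 1_{E(y)}$, where $P_i\in B(E(a))$ is the projection onto $\mathbb{C}e_i$; hence $L_{e_i}L_{e_i}^*=U_{a,y}(P_i\otimes 1_{E(y)})U_{a,y}^*$. Summing over $i$, using $\sum_{i}P_i=1_{E(a)}$ (as $\{e_i\}$ is an orthonormal basis of $E(a)$) and the unitarity of $U_{a,y}$, gives $\sum_i L_{e_i}L_{e_i}^*=U_{a,y}U_{a,y}^*=1_{E(ay)}$, and tensoring with $1_H$ yields $\sum_i\theta([x],e_i)\theta([x],e_i)^*=1_{H_{[ax]}}$. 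I expect the only genuinely delicate point to be the second paragraph: verifying that, under the \emph{coherent} choice of representatives $y$ for $[x]$ and $ay$ for $[ax]$, the operator $\theta([x],u)$ is the plain tensor $L_u\otimes 1_H$. This rests on $ay$ being a legitimate element of $[ax]\cap P$ (so that the Hilbert-space structure of $H_{[ax]}$ may be computed through it, independence of representative being already established) and on the classes $[v\otimes\xi]$ and $[uv\otimes\xi]$ corresponding exactly to $v\otimes\xi$ and $uv\otimes\xi$ under $q\circ i_y$ and $q\circ i_{ay}$. Once this is pinned down, the algebra with $U_{a,y}$ is routine.
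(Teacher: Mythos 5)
Your proof is correct, and while it starts from the same two observations as the paper --- fix a representative of $[x]$ in $P$ (the paper says ``we can assume $x\in P$''), and note via Eq.~\ref{inner product preserving} that the $\theta([x],e_i)$ are isometries with mutually orthogonal ranges --- it finishes along a genuinely different route. The paper argues by totality: the vectors $[uv\otimes\xi]$ with $u\in E(a)$, $v\in E(x)$, $\xi\in H$ are total in $H_{[ax]}$, and expanding $uv$ in the orthonormal basis $\{e_iv_j\}$ of $E(ax)$ (with $\{v_j\}$ an orthonormal basis of $E(x)$) exhibits each such vector inside $\bigoplus_{i}\theta([x],e_i)H_{[x]}$, which suffices. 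You instead transport everything through the coherent representatives $y$ and $ay$, identify $\theta([x],u)$ with $L_u\otimes 1_H$ where $L_u=U_{a,y}T_u$, and conjugate the completeness relation $\sum_i P_i=1_{E(a)}$ by the unitary $U_{a,y}$; your check that $ay\in[ax]\cap P$ via $(ax)^{-1}(ay)=x^{-1}y\in K$ is correct and works in the non-abelian setting. At bottom the two computations coincide --- the paper's basis expansion is precisely the matrix-element form of your identity $\sum_i L_{e_i}L_{e_i}^{*}=U_{a,y}U_{a,y}^{*}=1_{E(ay)}$, and both rest on the unitarity of the multiplication map --- but your packaging buys something the paper glosses over: the identification $\theta([x],u)=(q\circ i_{ay})(L_u\otimes 1_H)(q\circ i_y)^{-1}$ simultaneously establishes that $\theta([x],u)$ is well defined on all of $H_{[x]}$ (not merely on classes of elementary tensors) and bounded, facts the paper simply asserts when introducing $\theta$. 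Conversely, the paper's totality argument avoids having to verify the coherence of the two identifications, which, as you rightly flag, is the one delicate point in your approach; your justification of it (bijectivity of $q\circ i_y$, independence of the Hilbert structure from the representative, and agreement of two bounded operators on a total set) is sound, and both arguments handle $d=\infty$ without change since the sums converge strongly.
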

\textit{Proof.} Fix $[x] \in G/K$.  We can assume $x \in P$. Denote $\theta([x],e_i)$ by $\theta(e_i)$. Thanks to Eq. \ref{inner product preserving}, $\{\theta(e_i)\}_{i=1}^{d}$ is a family of isometries with mutually orthogonal ranges. It suffices to prove that a total subset of $H_{[ax]}$ is contained in $\displaystyle \bigoplus_{i=1}^{d}\theta(e_i)H_{[x]}$. 

Note that $\{[uv \otimes \xi]: u \in E(a), v \in E(x), \xi \in H\}$ is total in $H_{[ax]}$. Let $u \in E(a)$, $v \in E(x)$ and $\xi \in H$ be given. Let $\{v_j\}_{j =1}^{m}$ be an orthonormal basis for $E(x)$. Then, 
\begin{align*}
[uv \otimes \xi]&=\sum_{i,j}\langle uv|e_iv_j\rangle [e_iv_j \otimes \xi]\\
&=\sum_{i,j}\langle uv|e_iv_j \rangle \theta(e_i)[v_j \otimes \xi] \\ & \in \bigoplus_{i=1}^{d}\theta(e_i)H_{[x]}.\end{align*}
Hence the proof. \hfill $\Box$

Let $d$ be the dimension function, i.e. $d(x)=\dim E(x)$ for $x \in P$. We next make $\Big\{H_{[x]}\Big\}_{[x] \in G/K}$ a measurable  field of separable Hilbert spaces.  Pick a Borel cross section $\kappa:G/ K \to G$ such that the image of $\kappa$ is contained in $P$. Let $\{e_1,e_2,\cdots \}$ be a countable family of measurable sections such that $\{e_i(x)\}_{i=1}^{d(x)}$ is an orthornormal basis for $E(x)$ for each $x \in P$.
Let $\{\xi_1,\xi_2,\cdots\}$ be an orthonormal basis for $H$. Call a section $s:G/K \to \displaystyle \coprod_{[x] \in G/K}H_{[x]}$ measurable if the map 
\[
G/K \ni [x] \to \Big \langle s([x])\Big|[e_i(\kappa([x]))\otimes \xi_j]\Big  \rangle \in \bbc\]
is measurable for every $i,j$. Denote the set of measurable sections by $\Gamma$. Then, it is not difficult to see that $\Big(\{H_{[x]}\}_{[x] \in G/K},\Gamma\Big)$ is a measurable field of Hilbert spaces over $G/K$.

 For $s \in G$, define $\tau_s:G/K \to G/K$  by $\tau_s([x])=[sx]$.  Let $\mu$ be a quasi-invariant measure on $G/K$. We can choose the measure $\mu$ such that there exists a continuous function $\omega:G \times G/K \to (0,\infty)$ and for $s \in G$,  
\[
\frac{d(\mu \circ \tau_s)}{d\mu}([x])=\omega(s,[x]).\]
We refer the reader to \cite{Folland} for the existence of such a quasi-invariant measure. 
Let 
\[
L:=\Big \{f \in \Gamma:  \int_{G/K} ||f([x])||^{2}d\mu([x])<\infty\Big \}.\]
Define an inner product on $L$ by setting 
\[
\langle f|g \rangle:=\int_{G/K} \langle f([x])|g([x]) \rangle d\mu([x]).\]
Then, $L$ is a separable Hilbert space.  

Let $a \in P$ and let $u \in E(a)$ be given. For $f \in L$, define 
\[
\phi(u)f([x]):=\omega(a^{-1},[x])^{\frac{1}{2}}\theta([a^{-1}x],u)f([a^{-1}x]).\]
It follows from Eq. \ref{inner product preserving} that $\phi(u)f \in L$. Again, by Eq. \ref{inner product preserving}, for $a \in P$, $u,v \in E(a)$ and $f \in L$, 
\begin{equation}
\label{inner product2}
\langle \phi(u)f|\phi(v)f \rangle=\langle u|v \rangle \langle f|f \rangle.\end{equation}
The above equation implies that $\phi(u)$ is a a bounded operator.  Eq. \ref{inner product2} and Eq. \ref{associativity} implies that $\phi$ is a representation of $E$ on $L$. 
We call $\phi$ the induced representation of $E$ associated with $\sigma$.

\begin{ppsn}
\label{induced0}
The representation $\phi$ is measurable and is essential. 
\end{ppsn}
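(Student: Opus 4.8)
The plan is to prove the two assertions separately, handling essentiality first, since it is cleaner and isolates the role of the normalising factor $\omega(a^{-1},[x])^{\frac12}$. Throughout I use that $\phi$ is already known (from the discussion preceding the statement) to be a well-defined representation of $E$ on the separable Hilbert space $L$.

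Fix $a\in P$ and an orthonormal basis $\{e_i\}_{i=1}^{d}$ of $E(a)$. Since $\phi$ is a representation, $\phi(e_j)^{*}\phi(e_i)=\langle e_i\mid e_j\rangle=\delta_{ij}$, so $\{\phi(e_i)\}$ is a family of isometries with mutually orthogonal ranges whose closed linear span of ranges is exactly $[\phi(E(a))L]$. Hence essentiality is equivalent to the operator identity $\sum_{i}\phi(e_i)\phi(e_i)^{*}=1_{L}$. I would compute $\phi(e_i)^{*}$ explicitly: changing variables $[y]=[a^{-1}x]$ in the defining integral of the inner product of $L$ and using $\tfrac{d(\mu\circ\tau_a)}{d\mu}=\omega(a,\cdot)$ gives
\[
\phi(e_i)^{*}g([y])=\omega(a^{-1},[ay])^{\frac12}\,\omega(a,[y])\,\theta([y],e_i)^{*}g([ay]).
\]
Substituting back into $\phi(e_i)\phi(e_i)^{*}$, the two half-powers of $\omega$ recombine and one is left with the fibrewise operator $\omega(a^{-1},[x])\,\omega(a,[a^{-1}x])\sum_i\theta([a^{-1}x],e_i)\theta([a^{-1}x],e_i)^{*}$. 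The sum equals $1_{H_{[x]}}$ by Prop.~\ref{essential}, and the scalar is $1$ by the cocycle identity $\omega(a,[a^{-1}x])\,\omega(a^{-1},[x])=\omega(e,[x])=1$. This yields $\sum_i\phi(e_i)\phi(e_i)^{*}=1_L$ and hence essentiality. When $d=\infty$ the manipulation is justified because the partial sums of $\phi(e_i)\phi(e_i)^{*}$ form an increasing sequence of projections, so they converge strongly.

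For measurability, by separability of $L$ it suffices to verify that $x\mapsto\langle\phi(s(x))\xi\mid\eta\rangle$ is Borel for every measurable section $s$ of $E$ and for $\xi,\eta$ ranging over the total family of sections built from the measurable structure, i.e. those whose fibres are scalar multiples of the basis vectors $[e_i(\kappa([\cdot]))\otimes\xi_j]$. Writing this coefficient as the direct integral $\int_{G/K}\omega(x^{-1},[y])^{\frac12}\langle\theta([x^{-1}y],s(x))\xi([x^{-1}y])\mid\eta([y])\rangle\,d\mu([y])$ and appealing to Fubini, the task reduces to joint measurability of the integrand in $(x,[y])$. Here $\omega$ is continuous and $\kappa$ is Borel, so the only substantive point is measurability of the matrix coefficients of $\theta([x^{-1}y],s(x))$. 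I would make these explicit through the transition cocycle $W_{z',y'}$ of Prop.~\ref{equivalencegoat}: the coefficient
\[
\big\langle\theta([z],u)\,[e_i(\kappa([z]))\otimes\xi_j]\mid[e_k(\kappa([az]))\otimes\xi_l]\big\rangle
\]
equals $\big\langle W_{\kappa([az]),\,a\kappa([z])}\big(u\,e_i(\kappa([z]))\otimes\xi_j\big)\mid e_k(\kappa([az]))\otimes\xi_l\big\rangle$, and $W$ is assembled from the product-system maps $U_{\cdot,\cdot}$ and from $\sigma$, both measurable by hypothesis.

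The step I expect to be the main obstacle is exactly this last one: to realise $W_{\kappa([az]),\,a\kappa([z])}$ as a genuinely measurable function of $[y]$ (equivalently of $[z]=[x^{-1}y]$ and $x$) one must select, measurably in $[z]$, elements $a',b\in Q$ solving $a\kappa([z])\,a'=\kappa([az])\,b$. Such a selection exists on a Borel set of full measure by a measurable-selection argument (Jankov--von Neumann applied to the Borel set of triples $([z],a',b)$ satisfying the equation, which surjects onto $G/K$ by cofinality of $Q$); since $W$ is independent of the selection by Prop.~\ref{equivalencegoat}, the matrix coefficients are well defined and $\mu$-measurable, which is all the direct-integral argument needs. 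Granting this, continuity of $\omega$, Borel-ness of $\kappa$, and measurability of $U$ and $\sigma$ combine into joint measurability of the integrand, and Fubini completes the proof.
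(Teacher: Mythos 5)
Your proof is correct and, on essentiality, follows exactly the paper's route: the paper's entire proof reads that measurability is left to the reader and that essentiality follows from Prop.~\ref{essential}, and your adjoint computation together with the cocycle identity $\omega(a,[a^{-1}x])\,\omega(a^{-1},[x])=\omega(e,[x])=1$ is precisely the right way to convert the fibrewise identity of Prop.~\ref{essential} into the operator identity $\sum_i\phi(e_i)\phi(e_i)^{*}=1_L$ (your formula for $\phi(e_i)^{*}$ checks out against the change of variables $[x]=[ay]$, and the strong convergence remark handles $d=\infty$). For measurability, where the paper supplies no argument at all, your reduction to the matrix coefficients of the transition operators $W_{\kappa([az]),\,a\kappa([z])}$, followed by a measurable choice of $(a',b)\in Q\times Q$ with $a\kappa([z])a'=\kappa([az])b$, is sound and correctly identifies the one genuinely delicate point; note the selection can be made to depend only on the single variable $k:=\kappa([xz])^{-1}x\kappa([z])\in K$, since all that is required is $ka'=b$ with $a',b\in Q$, which exists because $k\in QQ^{-1}$. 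One refinement you should record: Jankov--von Neumann produces only a universally measurable selection, so as written you obtain universal (hence $\mu$-complete) measurability of $x\mapsto\langle\phi(s(x))\xi|\eta\rangle$ rather than Borel measurability; this suffices for every use made of the induced representation, and if one insists on Borel measurability, observe that in each application of Thm.~\ref{induced} in the paper the subsemigroup $Q$ is countable (the cyclic $Q=\{a^n:n\geq 1\}$ in Prop.~\ref{order unit}, the countable cofinal $Q$ in Thm.~\ref{main_abelian}), in which case the relevant Borel set has countable sections and Lusin--Novikov yields a genuinely Borel selection, removing even this caveat.
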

\textit{Proof.} The verification that $\phi$ is measurable is left to the reader. The fact that $\phi$ is essential follows from Prop. \ref{essential}. \hfill $\Box$

We have now proved the following theorem. 
\begin{thm}
\label{induced}
Let $G$ be a locally compact, second countable, Hausdorff group, and let $P \subset G$ be a  Borel subsemigroup with non-empty interior such that $PP^{-1}=G$. Let $Q \subset P$ be a 
cofinal Borel subsemigroup such that $QQ^{-1}$ is a closed subgroup of $G$. Let $E$ be a product system over $P$. If $E|_Q$ is concrete, then $E$ is concrete. 

\end{thm}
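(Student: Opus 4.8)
The plan is to reduce the statement to the construction already assembled in this section, since all of the analytic content has been isolated in the preceding propositions. First I would invoke the standard characterization (valid verbatim as in the one-parameter case, see \cite{Arveson}): a product system is concrete if and only if it admits a measurable, essential representation on a separable Hilbert space. Thus it suffices to manufacture such a representation of $E$, and the strategy is to \emph{induce} one up from $Q$.

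Since $E|_Q$ is assumed concrete, this same characterization, applied to the semigroup $Q$, furnishes a measurable, essential representation $\sigma$ of $E|_Q$ on a separable Hilbert space $H$. The hypotheses on $Q$ are precisely what is needed to feed $\sigma$ into the machinery developed above: cofinality of $Q$ guarantees that every fibre $\Delta([x])$ is non-empty (given $x \in G$ there is $y \in P$ with $[y]=[x]$), while the assumption that $K:=QQ^{-1}$ is a closed subgroup makes $G/K$ a standard homogeneous space carrying a quasi-invariant measure $\mu$ together with a Borel cross section $\kappa$ whose range lies in $P$.

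With $\sigma$ fixed, I would run the construction of this section. Forming $\Delta([x])$ and the relation $\sim$, Proposition \ref{equivalencegoat} shows that $\sim$ is an equivalence relation, so the fibre Hilbert spaces $H_{[x]}$ are well defined and separable, and the operators $\theta([x],u)$ satisfy the algebraic identities \eqref{associativity} and \eqref{inner product preserving}. Assembling the $H_{[x]}$ into a measurable field over $G/K$ via $\kappa$ and integrating against $\mu$ yields the separable Hilbert space $L$, and the stated formula for $\phi(u)$ defines a representation of $E$ on $L$, the representation property following from \eqref{inner product2} together with \eqref{associativity}. Finally, Proposition \ref{induced0} records that $\phi$ is measurable and essential. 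By the characterization invoked at the outset, $E$ is concrete, which completes the proof.

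Because the substantive work is already done, the theorem itself is essentially a bookkeeping statement and presents no further obstacle. If I had to single out the genuinely hard steps they are the two that are already in hand: the well-definedness of the fibres $H_{[x]}$, i.e. the cocycle identities (C1)--(C3) for the transition unitaries $W_{z,y}$ established in Proposition \ref{equivalencegoat}, and the essentiality, which rests on the Cuntz-type relation $\sum_{i}\theta([x],e_i)\theta([x],e_i)^{*}=1$ of Proposition \ref{essential}.
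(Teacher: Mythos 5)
Your proposal is correct and coincides with the paper's own proof: the paper introduces Theorem \ref{induced} with the words ``We have now proved the following theorem,'' meaning its proof is exactly the construction of the section --- take a measurable essential representation $\sigma$ of $E|_Q$ (which exists since $E|_Q$ is concrete, by the standard characterization), run the induction machinery through Propositions \ref{equivalencegoat}, \ref{essential} and \ref{induced0}, and conclude that the induced representation $\phi$ on the separable space $L$ is measurable and essential, so $E$ is concrete. You have assembled the same ingredients in the same order, so there is nothing further to add.
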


\begin{rmrk}
Suppose $P=G$ and $Q=K$ is a closed subgroup. Let $E$ be the trivial product system over $G$, i.e. $E(x)=\bbc$ for every $x \in G$, and the multiplication is the usual multiplication
of complex numbers. Then, measurable essential representations of $E$ correspond to measurable, and hence strongly continuous  unitary representations of $G$. Similary,  measurable essential representations
of $E|_Q$ correspond to strongly continuous unitary representations of $K$. If $\sigma$ is a strongly continuous unitary representation of $K$, then the representation $\phi$ constructed 
above is  the usual induced representation of $G$ associated with $\sigma$. 
\end{rmrk}

\section{An ultraproduct construction}
In this section, we prove Thm. \ref{inductive limit}. We start with a preparatory result. 

\begin{lmma}
\label{reduction}
Let $P$ be a discrete, countable semigroup. Suppose that $E:=\{E(a)\}_{a \in P}$ is a product system of separable Hilbert spaces. If $E$ has an essential representation, then  it has an essential representation on a separable Hilbert space. 
\end{lmma}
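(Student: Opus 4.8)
The plan is to start from a given essential representation $\phi$ of $E$ on a (possibly non-separable) Hilbert space $H$ and carve out a \emph{separable reducing subspace} on which $\phi$ restricts to an essential representation. Since $P$ is countable and each fibre $E(a)$ is separable, I first fix, for every $a \in P$, a countable orthonormal basis $\{e_i^{a}\}_{i}$ of $E(a)$; the collection $\mathcal{S}:=\{\phi(e_i^{a}),\,\phi(e_i^{a})^{*}: a \in P,\ i \geq 1\}$ is then a countable family of bounded operators on $H$. The whole argument reduces to producing a nonzero separable subspace invariant under every operator in $\mathcal{S}$.

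Before that I would record two elementary facts that make the reduction work. Axiom (1) gives $\phi(e_j^{a})^{*}\phi(e_i^{a})=\delta_{ij}$, so the $\phi(e_i^{a})$ are isometries with pairwise orthogonal ranges and $Q_a:=\sum_i \phi(e_i^{a})\phi(e_i^{a})^{*}$ converges strongly to the orthogonal projection onto $\overline{\bigoplus_i \mathrm{Ran}\,\phi(e_i^{a})}$. Next, $\phi$ is automatically \emph{linear} on each fibre: for $u \in E(a)$ the operator $D:=\phi(u)-\sum_i \langle u|e_i^{a}\rangle\phi(e_i^{a})$ satisfies, after combining cross terms via axiom (1) and Parseval, $D^{*}D=\phi(u)^{*}\phi(u)-\sum_i|\langle u|e_i^{a}\rangle|^{2}=\|u\|^{2}-\|u\|^{2}=0$, whence $D=0$. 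Linearity then yields $[\phi(E(a))H]=\overline{\bigoplus_i \mathrm{Ran}\,\phi(e_i^{a})}=\mathrm{Ran}\,Q_a$, so the hypothesis that $\phi$ is essential is \emph{equivalent} to $Q_a=1$ for every $a \in P$.

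Now I would fix any unit vector $\xi_0 \in H$, let $\mathcal{W}$ be the countable set of all finite words in the operators of $\mathcal{S}$ (including the empty word), and set $H_0:=\overline{\mathrm{span}}\{W\xi_0:W \in \mathcal{W}\}$. Then $H_0$ is separable and nonzero (it contains $\xi_0$), and by construction it is invariant under every operator of $\mathcal{S}$; since $\mathcal{S}$ is closed under adjoints, $H_0$ is a \emph{reducing} subspace, i.e.\ its projection $P_0$ commutes with each $\phi(e_i^{a})$ and $\phi(e_i^{a})^{*}$. Invoking fibrewise linearity again, $H_0$ is then invariant under $\phi(u)$ and $\phi(u)^{*}$ for all $u$, so $\phi_0(u):=\phi(u)|_{H_0}$ is a well-defined representation of $E$ on $H_0$ (axioms (1) and (2) are inherited from $H$ by restriction). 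Measurability need not be tracked, since $P$ is countable and discrete and so every representation is automatically measurable.

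Finally I would verify that $\phi_0$ is essential, which is the one step to be careful about. Because $P_0$ commutes with the generators, restricting the global identity $Q_a=1$ gives $\sum_i \phi_0(e_i^{a})\phi_0(e_i^{a})^{*}=P_0 Q_a P_0=P_0=1_{H_0}$ for every $a$, and by the equivalence recorded above this says precisely that $[\phi_0(E(a))H_0]=H_0$. Thus $\phi_0$ is an essential representation of $E$ on the separable space $H_0$. The genuinely delicate point is exactly this last one: merely invariant subspaces can fail to inherit essentialness, and it is the stronger \emph{reducing} property of $H_0$, together with the reformulation of essentialness as $Q_a=1$, that secures the conclusion.
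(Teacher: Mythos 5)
Your proof is correct and takes essentially the same approach as the paper: the paper passes to a separable reducing subspace for the separable $C^{*}$-algebra $C^{*}\{\phi(E(a)):a\in P\}$ and restricts, which is exactly what your subspace $H_0=\overline{\mathrm{span}}\{W\xi_0: W\in\mathcal{W}\}$ constructs by hand. Your verification of essentialness via fibrewise linearity and the identity $\sum_i\phi(e_i^{a})\phi(e_i^{a})^{*}=1$ is simply an explicit form of the paper's one-line observation that an essential representation stays essential on a reducing subspace.
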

 \textit{Proof.} Let $\phi: E \to B(K)$ be a representation of $E$ on a Hilbert space $K$ which may not be separable. Set $A:=C^{*}\{\phi(E(a)):a \in P\}$. Note that $A$ is a separable $C^{*}$-algebra. Consequently, there exists a 
separable, closed subspace $H \subset K$ such that $H$ and $H^{\perp}$ are invariant under $A$. Then, the representation $\phi$ leaves $H$ and $H^{\perp}$ invariant. The fact that $\phi$ is an essential representation of $E$ on $K$ together with the fact that $\phi$ leaves $H$ and $H^{\perp}$ invariant imply that its restriction to $H$ is again an essential representation.  \hfill $\Box$

\begin{rmrk}
\label{the most crucial remark}
Let $P$ be a measurable semigroup. Assume that $P$ is left reversible, i.e. $aP \cap bP \neq \emptyset$ for every $a,b \in P$. 
Let $E:=\{E(a)\}_{a \in P}$ be a product system over $P$. Suppose $\phi:E \to B(K)$ is a representation. Observe that for $a, b \in P$, $\phi(E(b))K \subset \phi(E(a))K$ if $b \geq a$ (Recall that we say $b \geq a$ if $b=ac$ for some $c \in P$).
Set \[
K_\infty:=\bigcap_{ a \in P}\phi(E(a))K.\]
Then, $K_\infty$ is reducing for the set $\{\phi(u): u \in E(a), a \in P\}$ of operators. 
 As in Prop. 3.2.4 of \cite{Arveson}, we can prove that $\phi$ restricted to $K_\infty$ is essential.  We call $K_\infty$ the essential part of $\phi$ and $K_\infty^{\perp}$ the singular part. Thus, to show $E$ has an essential representation on a Hilbert space, it suffices to construct a representation on a Hilbert space $K$ such that $K_\infty \neq 0$. 
  Note that since $\{\phi(E(a))K\}_{a \in P}$ is a decreasing family of subspaces  \[\displaystyle K_\infty=\bigcap_{n \geq 1}\phi(E(s_n))K\] for any cofinal sequence $(s_n)$.

\end{rmrk}

\textit{Proof of Thm. \ref{inductive limit}:} Let $P$, $P_n$ and $E$ be as in the statement of Thm. \ref{inductive limit}. Let $(s_n)_{n \geq 1}$ be a cofinal sequence in $P$ which is increasing. After renumbering, we can assume that 
for every $n \in \bbn$, $s_n \in P_n$ and there exists $t_{n+1} \in P_{n+1}$ such that $s_{n+1}=s_nt_{n+1}$. 
Set $t_1:=s_1$. 

Let $E_n$ be the product system $E$ restricted to $P_n$. Let $\phi_n$ an essential representation of $E_n$ on a separable Hilbert space $H_n$.  Let $K_0 \subset \prod_{n=1}^{\infty}H_n$ be the set of sequences $(\xi_n)_{n \geq 1}$ such that $(||\xi_n||)_{n \geq 1}$ is bounded. Fix a state $\omega$ on $\ell^{\infty}(\bbn)$ that vanishes on $c_0(\bbn)$. 
Define a semi-definite inner product $\langle~|~\rangle_{\infty}$ on $K_0$ by setting
\[
\langle \xi|\eta \rangle_{\infty}:=\omega\Big((\langle \xi_n|\eta_n \rangle)_{n=1}^{\infty}\Big)\]
for $\xi,\eta \in K_0$. We mod out the null vectors and complete to obtain a Hilbert space which we  denote by $K$. Since $\omega$ vanishes on $c_0(\bbn)$, it follows that  for $\xi:=(\xi_n)_{n \geq 1}$ and $\eta:=(\eta_n)_{n \geq 1}$ in $K_0$, the equality
\begin{equation}
\label{eventuality}
[\xi]=[\eta]   \end{equation} holds in $K$ if $\xi_n=\eta_n$ eventually. 

Let $a \in P$ and let $u \in E(a)$ be given. Choose $N \geq 1$ such that $a \in P_n$ for $n \geq N$. Define $\phi(u):K \to K$ by setting
\begin{equation}
\label{definition of phi}
\phi(u)[\xi]=[\eta]
\end{equation}
where $\eta$ is any sequence such that $\eta_n=\phi_n(u)\xi_n$ eventually. Note that $\phi_n(u)$ makes sense when $n \geq N$. 
Clearly, $\phi$ is a representation of $E$ on $K$. 

We claim that the essential part $K_\infty$ is non-zero. 
  Note that for $n \in \bbn$, $s_n=t_1t_2\cdots t_n$.  For every $k$, choose a unit vector $u_k \in E(t_k)$. For $n \geq 1$, let 
\[
v_n:=u_1u_2\cdots u_n.\]
For $n \geq 1$, let $\xi_n \in H_n$ be a unit vector, and set $\eta_n:=\phi_n(v_n)\xi_n$. Then, $[\eta]$ is a unit vector in $K$. 

Let $n \geq 1$ be given. For $k \geq n+1$, let $w_k:=u_{n+1}u_{n+2}\cdots u_k$.  Let $\gamma:=(\gamma_i)_i$ be any sequence in $K_0$ such that 
$\gamma_k=\phi_k(w_k)\xi_k$ for $k \geq n+1$. Then, for $k \geq n+1$, $\eta_k=\phi_k(v_n)\gamma_k$. By Eq. \ref{definition of phi}, we have \[[\eta]=\phi(v_n)[\gamma].\] Thus, $[\eta] \in \phi(E(s_n))K$ for every $n \geq 1$. 
Consequently, $[\eta]  \in \displaystyle \bigcap_{n \geq 1}\phi(E(s_n))K$. Thanks to Remark \ref{the most crucial remark}, $[\eta] \in K_\infty$. Thus, the essential part of $\phi$ is non-zero. The conclusion follows from Lemma \ref{reduction}
and Remark \ref{the most crucial remark}. \hfill $\Box$

\section{Proof of the main theorem}

In this section, we prove Thm. \ref{main}. 
The next four  results  are certainly well known. However,  the presentation given here can be considered different. 

\begin{ppsn}
\label{trivial case}
Every product system over the trivial group $\{0\}$ is concrete. 
\end{ppsn}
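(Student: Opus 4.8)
The plan is to invoke the characterization, recalled just before the first Question, that a product system is concrete precisely when it admits a measurable, essential representation on a separable Hilbert space. Since $P=\{0\}$ consists of a single point, every section is measurable and the measurability requirement on a representation is vacuous. Hence it suffices to produce an \emph{essential} representation of $E$ on a separable Hilbert space.

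First I would unpack the data. A product system $E$ over $\{0\}$ is nothing but a single non-zero separable Hilbert space $K:=E(0)$ together with the unitary $U:=U_{0,0}:K\otimes K\to K$ implementing the (necessarily associative) multiplication, so that $uv=U(u\otimes v)$ for $u,v\in K$. The key step is then to use the \emph{left regular representation} of $K$ on itself: take $H:=K$ and define $\phi(u):K\to K$ by $\phi(u)v:=uv=U(u\otimes v)$. Because $U$ is unitary, $\|uv\|=\|u\|\,\|v\|$, so each $\phi(u)$ is bounded and $u\mapsto\phi(u)$ is linear.

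The representation axioms are immediate. Condition (1) follows from the fact that $U$ preserves inner products: for $u,v\in K$ and $w,w'\in K$,
\[
\langle \phi(u)w\mid \phi(v)w'\rangle=\langle uw\mid vw'\rangle=\langle u\otimes w\mid v\otimes w'\rangle=\langle u\mid v\rangle\,\langle w\mid w'\rangle,
\]
so that $\phi(v)^{*}\phi(u)=\langle u\mid v\rangle$. Condition (2), $\phi(uv)=\phi(u)\phi(v)$, is exactly the associativity identity $U(U\otimes 1)=U(1\otimes U)$ applied to $u\otimes v\otimes w$. Finally, for essentiality, $[\phi(E(0))H]$ is the closed linear span of $\{uv:u,v\in K\}$, which equals $U\big(\overline{\operatorname{span}}\{u\otimes v\}\big)=U(K\otimes K)=K$ since $U$ is a surjective unitary and hence a homeomorphism. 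Thus $\phi$ is an essential representation on the non-zero, separable space $K$, and $E$ is concrete.

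I would emphasize that there is essentially no hard step here; this is a base case. The only points worth care are that $K$ is non-zero and separable (both guaranteed by the definition of a product system), that essentiality is \emph{precisely} the surjectivity of the multiplication $U$, and that the conjugation in condition (1) matches the convention $\langle S\mid T\rangle=T^{*}S$ used for the inner product on the fibres — a bookkeeping matter rather than a genuine obstacle.
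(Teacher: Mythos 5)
Your proof is correct and follows exactly the paper's argument: the paper also identifies $E$ with the single Hilbert space $E(0)$ carrying a multiplication whose linearization is unitary, and takes the left regular representation $\phi(u)v=uv$, with essentiality coming from surjectivity of the multiplication unitary. Your write-up merely spells out the verifications the paper leaves implicit.
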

\textit{Proof.} Let $E$ be a product system over the trivial group $\{0\}$. This means that $E$ is a Hilbert space which is also an algebra over $\bbc$ such that the multiplication map 
 $E \otimes E \ni x \otimes y \to xy \in E$ is a unitary operator.  For $x \in E$, let $\phi(x):E \to E$ be the operator defined by $\phi(x)y=xy$. Then, $\phi$ defines an essential representation of $E$ on $E$. The proof is complete.  \hfill $\Box$

\begin{ppsn}
\label{natural numbers}
Let $E=\{E(n)\}_{n \in \bbn}$ be a product system over $\bbn$. Then, $E$ is concrete. Every product system over $\bbn_0$ is concrete. 
\end{ppsn}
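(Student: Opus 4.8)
The plan is to produce, in each case, a measurable essential representation on a separable Hilbert space; since $\bbn$ and $\bbn_0$ are countable and discrete, every section and every representation is automatically measurable, so only essentiality and separability need genuine attention. The key structural fact I would exploit is that associativity of the multiplication yields canonical unitaries $E(n)\cong E(1)^{\otimes n}$, so that over $\bbn$ a representation is completely determined by its restriction to the first fibre $E(1)$.

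For $\bbn$ I would fix an orthonormal basis $\{e_i\}_i$ of $E(1)$ (finite or countable) and a separable infinite-dimensional Hilbert space $H$ carrying an orthogonal decomposition $H=\bigoplus_i H_i$ with each $H_i$ unitarily isomorphic to $H$. Letting $s_i:H\to H_i\subset H$ be the resulting isometries, one has $s_j^*s_i=\delta_{ij}$ and $\sum_i s_is_i^*=1$ strongly, i.e. the $s_i$ are Cuntz isometries. Setting $\phi(e_i)=s_i$, extending linearly on $E(1)$ and then to each $E(n)\cong E(1)^{\otimes n}$ by $\phi(u_1\cdots u_n)=\phi(u_1)\cdots\phi(u_n)$, a check on basis vectors shows that $\phi$ is inner-product preserving on every fibre and multiplicative, hence a representation. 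Essentiality then follows by telescoping: if $\{e_w\}$ is the orthonormal basis of $E(n)$ consisting of length-$n$ products of the $e_i$, repeated use of $\sum_i s_is_i^*=1$ collapses $\sum_w\phi(e_w)\phi(e_w)^*$ to $1$, so $[\phi(E(n))H]=H$ for every $n$. This settles the $\bbn$ case.

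For $\bbn_0$ the only new feature is the unit fibre $E(0)$, which the definition does not force to be $\bbc$: it is a separable Hilbert space with an associative unitary multiplication acting unitarily on each $E(n)$. Starting from a measurable essential representation $\phi$ of $E|_{\bbn}$ on a separable $H$ as above, I would extend it to $E(0)$ by $\phi(u):=\sum_i\phi(ue_i)\phi(e_i)^*$ for $u\in E(0)$, where $\{e_i\}_i$ is an orthonormal basis of $E(1)$. Because $\langle ue_i|ue_j\rangle=\langle u|u\rangle\langle e_i|e_j\rangle$ and $\sum_i\phi(e_i)\phi(e_i)^*=1$, this series converges strongly to a bounded operator of norm $\|u\|$, and is independent of the chosen basis. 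I would then verify the representation identities by reducing everything to the level-one relations $\phi(u)\phi(f)=\phi(uf)$ and $\phi(f)\phi(u)=\phi(fu)$ for $f\in E(1)$ (using associativity and $\sum_i\phi(e_i)\phi(e_i)^*=1$), and, upon multiplying on the right by the $\phi(e_i)$ and summing, the identities $\phi(u)\phi(u')=\phi(uu')$ and $\phi(v)^*\phi(u)=\langle u|v\rangle$ for $u,u',v\in E(0)$. Finally, since $\{f_ke_i\}_{k,i}$ is an orthonormal basis of $E(1)$ whenever $\{f_k\}_k$ is one of $E(0)$, essentiality at level $1$ gives $\sum_k\phi(f_k)\phi(f_k)^*=\sum_{k,i}\phi(f_ke_i)\phi(f_ke_i)^*=1$, which is essentiality at level $0$; together with the $\bbn$ case this makes the extended $\phi$ an essential representation of $E$ over $\bbn_0$.

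The main obstacle is precisely the $\bbn_0$ case, and specifically the possibility that $E(0)$ is not one-dimensional: one cannot simply let $\phi|_{E(0)}$ be scalars, and the real content lies in checking that the canonical extension $\phi(u)=\sum_i\phi(ue_i)\phi(e_i)^*$ is well defined, genuinely multiplicative across all mixed products $E(0)\cdot E(n)$ and $E(n)\cdot E(0)$, and restores essentiality at the unit. The $\bbn$ case, by contrast, is routine once the problem is reduced to exhibiting Cuntz isometries on a separable space.
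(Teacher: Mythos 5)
Your proof is correct, and while the $\bbn$ half coincides with the paper's argument, your treatment of $\bbn_0$ takes a genuinely different route. For $\bbn$, the paper also fixes a unitary $U:E(1)\otimes H\to H$ and sets $T_u\xi=U(u\otimes\xi)$ --- this is exactly your Cuntz family $\phi(e_i)=s_i$ --- but instead of verifying essentiality by telescoping $\sum_{|w|=n}s_ws_w^{*}=1$, it forms the $E_0$-semigroup $\alpha(A)=\sum_i T_{e_i}AT_{e_i}^{*}$ and invokes the classification fact that product systems over $\bbn$ are isomorphic precisely when their fibres at $1$ have equal dimension (the same structural fact $E(n)\cong E(1)^{\otimes n}$ underlies both versions, so this is only a cosmetic difference). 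For $\bbn_0$, however, the paper never touches $E(0)$ directly: it simply observes that $\bbn$ is a cofinal subsemigroup of $\bbn_0$ with $\bbn-\bbn=\bbz$ closed, and cites Thm.~\ref{induced}, i.e.\ the induced-representation machinery of Section~2 (which in this degenerate case, $K=\bbz$, $G/K$ a point, amounts to passing to an inductive limit of the spaces $E(n)\otimes H$). You instead keep the same Hilbert space and extend the representation to the unit fibre by $\phi(u)=\sum_i\phi(ue_i)\phi(e_i)^{*}$; your checks are all sound --- basis-independence is immediate from the identification $\phi(u)=U(L_u\otimes 1_H)U^{*}$ where $L_u:E(1)\to E(1)$ is left multiplication, the relations $\phi(v)^{*}\phi(u)=\langle u|v\rangle$ and multiplicativity on mixed products reduce to $\langle ue_i|ve_j\rangle=\langle u|v\rangle\delta_{ij}$ (unitarity of $E(0)\otimes E(1)\to E(1)$) together with $\sum_i\phi(e_i)\phi(e_i)^{*}=1$, and your essentiality argument at level $0$ via the orthonormal basis $\{f_ke_i\}$ of $E(1)$ is exactly right. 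Your concern that $E(0)$ need not be $\bbc$ is legitimate ($\dim E(0)$ can be $1$ or $\infty$; compare Prop.~\ref{trivial case}, which handles product systems over the trivial group by left multiplication, in the same spirit as your extension). What each approach buys: yours is elementary and self-contained, needing nothing beyond the level-one Cuntz relations; the paper's is a one-line corollary of the induction theorem it must develop anyway, and that theorem immediately generalises from $\bbn\subset\bbn_0$ to arbitrary cofinal Borel subsemigroups $Q\subset P$ with $QQ^{-1}$ closed, which is where the real work of the paper lies.
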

\textit{Proof.} Let $H$ be an infinite dimensional separable Hilbert space. Fix a unitary operator $U:E(1) \otimes H \to H$. For $\xi \in E(1)$, define 
$T_u:H \to H$ by setting 
\[
T_u(\xi)=U(u \otimes \xi).\] Let $\{e_i\}_{i \in I}$ be an orthonormal basis for $E(1)$. Define a unital, normal $^*$-endomorphism of $B(H)$ by setting
\[
\alpha(A):=\sum_{i \in I}T_{e_i}AT_{e_i}^{*}.\]
Let $F:=\{F(n)\}_{n \in \bbn}$ be the product system associated with the $E_0$-semigroup $\{\alpha^n\}_{n \in \bbn}$. 
Then, $F(1)=\{T_u: u \in E(1)\}$. 
Thus, $F(1)$ and $E(1)$ have the same dimension.  It is clear that two product systems over $\bbn$ are isomorphic if and only if the corresponding fibres at $1$ have the same dimension. 
Thus, $E$ and $F$ are isomorphic. In other words, $E$ is a concrete product system. Note that $\bbn$ is a cofinal subsemigroup of $\bbn_0$. The last conclusion is  immediate from Thm. \ref{induced}. \hfill $\Box$

The following is a corollary to Prop. \ref{trivial case} and Thm. \ref{induced}.  
\begin{ppsn}
\label{group case}
Let $G$ be a locally compact group, Hausdorff, second countable group. Suppose $E=\{E(x)\}_{x \in G}$ is a product system. Then, $E$ is concrete. 
\end{ppsn}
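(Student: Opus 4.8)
The plan is to derive this as an immediate corollary of Theorem \ref{induced}, taking the ambient semigroup to be the whole group and the cofinal subsemigroup to be the trivial subgroup.

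First I would set $P = G$ in Theorem \ref{induced}. The hypotheses on $P$ are trivially satisfied: $G$ is a Borel subsemigroup of itself with non-empty interior (it is open in itself), and clearly $PP^{-1} = GG^{-1} = G$. So the standing assumptions of Theorem \ref{induced} hold with this choice.

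The decisive step is the choice of the cofinal subsemigroup $Q$. I would take $Q = \{e\}$, the trivial subgroup. This is a Borel subsemigroup, since a singleton is closed and hence Borel, and $QQ^{-1} = \{e\}$ is a closed subgroup of $G$ because $G$ is Hausdorff. The essential point — and the only place the group structure is actually used — is cofinality: for any $x \in G$, the element $a := x^{-1}$ lies in $P = G$ and satisfies $xa = e$, so $x \leq e$ in the divisibility preorder. Thus $\{e\}$ dominates every element of $G$, and $Q = \{e\}$ is cofinal. With this, $E|_Q$ is a product system over the trivial group $\{e\} \cong \{0\}$, which is concrete by Proposition \ref{trivial case}. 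Applying Theorem \ref{induced} to this $P$ and $Q$ then yields that $E$ is concrete.

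There is no serious obstacle here; the entire content sits in the two results already established, and the argument is short. The only thing one must recognize is that the trivial subgroup is cofinal in a group. One might be tempted to hunt for a larger cofinal subsemigroup whose enveloping subgroup $QQ^{-1}$ is a proper closed subgroup, but the smallest closed subgroup already works, precisely because invertibility forces the identity to be a maximum for the preorder $\leq$. (As a sanity check, the remark following Theorem \ref{induced} records that in this situation the induced construction with $Q = \{e\}$ recovers the regular-representation picture on $G/\{e\} = G$, which is exactly what one expects.)
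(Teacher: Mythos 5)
Your proposal is correct and coincides with the paper's own proof, which consists precisely of applying Thm.~\ref{induced} with $P=G$ and $Q=\{e\}$, with Prop.~\ref{trivial case} supplying concreteness of $E|_Q$. The extra details you supply --- that $\{e\}$ is cofinal because $x \leq e$ via $a = x^{-1}$, and that $QQ^{-1}=\{e\}$ is a closed subgroup --- are exactly the verifications the paper leaves implicit.
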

\textit{Proof.}  In Thm. \ref{induced}, let $P=G$, and let $Q=\{e\}$. \hfill $\Box$

\begin{ppsn}
\label{singly generated}
Let $P$ be a subsemigroup of a discrete group $G$. Suppose that $P$  is singly generated, i.e. there exists $a \in P$ such that $P=\{a^n:n \in \bbn\}$. Then, every product system over $P$ is concrete. 
\end{ppsn}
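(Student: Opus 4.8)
The plan is to split into two cases according to the order of the generator $a$ in $G$, reducing each to a proposition already proved. First, a general remark that simplifies everything: since $P$ is a subsemigroup of a \emph{discrete} group, $P$ is countable and carries the discrete Borel structure, so \emph{every} function out of $P$ is measurable. Consequently measurability of representations is automatic, and concreteness of a product system over $P$ is equivalent to the bare existence of an essential representation on a separable Hilbert space. I would also record that concreteness is invariant under semigroup isomorphism: if $\Theta$ is an isomorphism of discrete semigroups and $E$ is a product system over the target, then pulling the fibres back along $\Theta$ yields a product system over the source, and an essential (automatically measurable) representation of one transports verbatim to the other.

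First I would treat the case where $a$ has infinite order. Then the map $\bbn \ni n \mapsto a^n \in P$ is a bijection satisfying $a^n a^m = a^{n+m}$, hence a semigroup isomorphism $\bbn \xrightarrow{\sim} P$. By Prop.~\ref{natural numbers} every product system over $\bbn$ is concrete, so by the invariance noted above every product system over $P$ is concrete.

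Next I would treat the case where $a$ has finite order, say $a^m = e$ with $m$ minimal. The point to notice here is that $P$ is in fact a \emph{group}: since $a^m = e \in P$, the identity lies in $P$, and each $a^n$ has inverse $a^{m-n} \in P$, so $P = \{e, a, \dots, a^{m-1}\}$ is the finite cyclic group $\bbz/m\bbz$. A finite group is a locally compact, second countable, Hausdorff group, so Prop.~\ref{group case} applies directly and gives that every product system over $P$ is concrete.

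I expect no serious obstacle; the only step that is not completely mechanical is recognizing in the finite-order case that the positively generated semigroup $\{a^n : n \in \bbn\}$ collapses into a finite cyclic \emph{group} as soon as $a^m = e$, which is what lets Prop.~\ref{group case} take over. Once the dichotomy (infinite order versus finite order) is set up and this collapse is observed, both branches are immediate appeals to earlier results, and the invariance of concreteness under isomorphism closes the argument.
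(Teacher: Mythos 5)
Your proposal is correct and is essentially the paper's own proof: the paper observes that $P$ is either isomorphic to $\bbn$ or is a finite group, and then invokes Prop.~\ref{natural numbers} and Prop.~\ref{group case}, exactly your dichotomy. Your added details (automatic measurability over a discrete semigroup, invariance of concreteness under semigroup isomorphism, and the collapse to a finite cyclic group when $a^m=e$) merely make explicit what the paper leaves implicit.
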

\textit{Proof.} Note that either $P$ is isomorphic to $\bbn$ or $P$ is a finite group.  The conclusion is immediate from Prop. \ref{natural numbers} and Prop. \ref{group case}. \hfill $\Box$

\begin{dfn}
Let $P$ be a semigroup, and let $a \in P$. We say that $a$ is an order unit of $P$ if given $x \in P$, there exists $n \in \bbn$ such that $x \leq a^n$.
We say $P$ has an order unit if there exists $a \in P$ such that $a$ is an order unit of $P$
\end{dfn}

\begin{ppsn}
\label{order unit}
Let $G$ be a locally compact, second countable, Hausdorff topological group, and let $P \subset G$ be a Borel subsemigroup with non-empty interior such that $PP^{-1}$ is a subgroup. Suppose $P$ has an order unit.  Then, every product system over $P$ is concrete.  
\end{ppsn}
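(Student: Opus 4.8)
The plan is to produce a cofinal Borel subsemigroup $Q\subseteq P$ whose associated group $QQ^{-1}$ is a \emph{closed} subgroup of $G$ and for which $E|_Q$ is concrete; Thm~\ref{induced} then immediately upgrades this to concreteness of $E$. Since $P$ has non-empty interior, $PP^{-1}=\mathrm{Int}(P)\,\mathrm{Int}(P)^{-1}$ is an open, hence closed, subgroup, so after replacing $G$ by $PP^{-1}$ we may assume $PP^{-1}=G$ and Thm~\ref{induced} is applicable. Let $a\in P$ be an order unit and set $K:=\overline{\langle a\rangle}$, the closed subgroup topologically generated by $a$. Being monothetic, $K$ is, by the structure theory of monothetic locally compact groups, either compact or topologically isomorphic to $\bbz$. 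A preliminary fact I would record first is that some power of $a$ lies in $\mathrm{Int}(P)$: right translation by $p\in P$ is a homeomorphism carrying the open set $\mathrm{Int}(P)$ into $P$, so $\mathrm{Int}(P)\,P\subseteq\mathrm{Int}(P)$, and picking $x\in\mathrm{Int}(P)$ together with $n$ and $c\in P$ with $a^n=xc$ (order unit) yields $a^n\in\mathrm{Int}(P)$.

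Case 1: $\langle a\rangle$ is closed. Then $\langle a\rangle$ is a countable closed subgroup of the Baire space $G$, hence discrete, so it is finite or infinite cyclic. I would take $Q:=\{a^n:n\in\bbn\}$, a countable (hence Borel) subsemigroup which is cofinal because $a$ is an order unit and satisfies $QQ^{-1}=\langle a\rangle$, a closed subgroup. Viewing $Q$ as a singly generated subsemigroup of the discrete group $\langle a\rangle$, Prop~\ref{singly generated} gives that $E|_Q$ is concrete, and Thm~\ref{induced} concludes this case.

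Case 2: $\langle a\rangle$ is not closed. Then by the dichotomy $K$ is compact, and I would take $Q:=K\cap P$, a Borel subsemigroup. It is cofinal since it contains $\{a^n:n\in\bbn\}$. I claim $QQ^{-1}=K$: the inclusion $\subseteq$ is clear, and for $g\in K$ the order-unit property $\bigcup_m Pa^{-m}=G$ gives $m$ with $ga^m\in P$; as $g,a^m\in K$ we get $ga^m\in Q$ and $a^m\in Q$, whence $g=(ga^m)(a^m)^{-1}\in QQ^{-1}$. Thus $QQ^{-1}=K$ is a closed subgroup. It remains to see $E|_Q$ is concrete, and for this I would prove $Q=K$, so that $E|_Q=E|_K$ is concrete by Prop~\ref{group case}. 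The set $Q$ is dense in $K$: it contains $\{a^n:n\in\bbn\}$, whose closure is a closed subsemigroup of the compact group $K$, hence a subgroup containing $a$, hence all of $K$. Moreover $Q$ has non-empty interior in $K$, since it contains the relatively open set $K\cap\mathrm{Int}(P)$, which is non-empty by the preliminary fact. Writing $W:=\mathrm{Int}_K(Q)$, for any $g\in K$ the open set $gW^{-1}$ is non-empty and so meets the dense set $Q$ in some point $s=gw^{-1}$ with $s\in Q$ and $w\in W\subseteq Q$; then $g=sw\in QQ\subseteq Q$. Hence $Q=K$, and Thm~\ref{induced} finishes.

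The genuinely routine points, which I would not belabour, are that $Q$ is Borel, that it is cofinal, and the measurability bookkeeping, all of which are immediate or already subsumed by the construction behind Thm~\ref{induced}. The substantive obstacle I expect is entirely in Case~2: one must recognise $K=\overline{\langle a\rangle}$ as compact (this is where the structure theorem for monothetic groups enters) and then collapse $Q=K\cap P$ onto the whole subgroup $K$. The two ingredients that make the collapse work are the order-unit identity $QQ^{-1}=K$ and the density-plus-interior argument forcing $Q=K$; crucially, it is the placement of a power $a^n$ inside $\mathrm{Int}(P)$ that supplies the needed interior, and without the order unit one could not guarantee that $K$ meets $\mathrm{Int}(P)$ at all.
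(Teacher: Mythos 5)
Your proof is correct, but it takes a genuinely different route from the paper's in the one place where real work is needed. The paper first disposes of the degenerate case by observing that $a^{-1}\in P$ forces $P=\bigcup_{n}Pa^{-n}=G$, which is handled by Prop.~\ref{group case}; assuming $a^{-1}\notin P$, it then proves by a bare-hands argument that $\langle a\rangle$ is \emph{always} closed: if $a^{m_k}\to x$, the order-unit property places $x\in Int(P)a^{-r}$, so $a^{m_k+r}\in Int(P)$ eventually, and since $Int(P)$ is a semigroup, $m_k+r<0$ would give $a^{-1}\in Int(P)$, a contradiction; applying the same to $(a^{-m_k})_k$ bounds $(m_k)$ on both sides, yielding a constant subsequence and $x\in\langle a\rangle$. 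Then $Q=\{a^n:n\in\bbn\}$ is cofinal with $QQ^{-1}=\langle a\rangle$ closed, and Prop.~\ref{singly generated} plus Thm.~\ref{induced} finish --- exactly your Case~1, minus any proof that its hypothesis always holds. Your alternative is to tolerate non-closedness and invoke Weil's dichotomy for monothetic locally compact groups: in your Case~2, $K=\overline{\langle a\rangle}$ is compact, and you collapse $Q=K\cap P$ onto $K$ via the fact that a closed subsemigroup of a compact group is a group (giving density of $\{a^n\}$ in $K$) together with your density-plus-interior argument; this is valid, and note that it secretly recovers the paper's degenerate case, since $Q=K$ gives $a^{-1}\in K\subseteq P$ and hence $P=G$, so you could also have concluded Case~2 directly from Prop.~\ref{group case} without inducing from $K$. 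What each approach buys: the paper's argument is elementary and self-contained, needing no structure theory and showing in effect that the compact alternative never survives the reduction $a^{-1}\notin P$; yours trades that elementary sequence bookkeeping for the conceptual transparency of the monothetic dichotomy, at the cost of citing Weil's theorem and standard compact-group facts. One genuine merit of your write-up is the preliminary fact that some power $a^n$ lies in $Int(P)$ (via $Int(P)P\subseteq Int(P)$ and $a^n=xc$): the paper's own closedness argument tacitly uses $a\in Int(P)$, which is not among its hypotheses, and your observation (together with the remark that $a^n$ is again an order unit) is precisely the patch that makes such steps rigorous.
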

\textit{Proof.} Since $P$ has non-empty interior, $PP^{-1}=Int(P)Int(P)^{-1}$. Thus, $PP^{-1}$ is an open subgroup of $G$. Without loss of generality, we can assume that $PP^{-1}=G$. Let $a \in P$ be an order unit, i.e $\bigcup_{n=1}^{\infty}Pa^{-n}=G$. We can assume that $a^{-1} \notin P$. Otherwise, $P=G$, and we are done by Prop. \ref{group case}.   Let $Q:=\{a^n: n \geq 1\}$.  In view of Thm. \ref{induced}, it suffices to prove that the cyclic group generated by $a$, denoted $\langle a \rangle$, is closed in $G$. 

Let $(m_k)$ be a sequence of integers such that $(a^{m_k}) \to x \in G$. Since $a$ is an order unit, $x \in Int(P)a^{-r}$ for some $r>0$. Then, eventually $a^{m_k+r} \in Int(P)$. Since $Int(P)$ is a semigroup and $a \in Int(P)$,  if $m_k+r < 0$ for some $k$, then  $a^{-1} \in Int(P)$ which is a contradiction. Thus, $m_k \geq -r$ eventually. In other words, $(m_k)_k$ is bounded below. Applying the same argument to the convergent sequence $(a^{-m_k})_k$, we see that $(-m_k)_k$ is bounded below, i.e. $(m_k)$ is bounded above. Thus, there exists a subsequence $(m_{k_i})_i$ such that $(m_{k_i})_i$ is a constant sequence. Hence, $x \in \langle a \rangle$. The proof is complete. \hfill $\Box$. 

\begin{xmpl}
Two examples that satisfy the hypothesis of Prop. \ref{order unit} are given below. 

\begin{enumerate}
\item[(1)] Let $G:=H_{2n+1}$ be the Heisenberg group, and let $P:=H_{2n+1}^{+}$ be the subsemigroup of $G$ with all the matrix entries non-negative.  
\item[(2)] Let $G:=\Big\{\begin{bmatrix}
          a & b \\
          0 & 1
          \end{bmatrix}: x>0, y \in \bbr\Big\}$ be the $ax+b$-semigroup, and let \[
          P:=\Big\{\begin{bmatrix}
                        a & b \\
                        0 & 1 \end{bmatrix} \in G: a \geq 1, b \geq 0\Big\}.\]
\end{enumerate}
However, if we consider the  subsemigroup $Q$ defined by
\[
Q:=\Big \{\begin{bmatrix}
a & b \\
0 & 1 \end{bmatrix} \in G: a \leq 1, b \geq 0 \Big\},\]
then $Q$ is right Ore but $Q$ does not have an order unit. 
\end{xmpl}

In the connected case, we record the observation that normal semigroups have order units. 
\begin{ppsn}
\label{connected}
Let $G$ be a locally compact, second countable, Hausdorff topological group. Suppose that $G$ is connected. Let $P \subset G$ be a Borel subsemigroup with non-empty interior. If $P$ is normal in $G$,  then every product system over $P$ is concrete. 
\end{ppsn}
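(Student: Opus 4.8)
The plan is to reduce the connected case to Proposition~\ref{order unit}, whose hypotheses we have already verified suffice. By the standing reductions, since $P$ has non-empty interior, $PP^{-1}=\mathrm{Int}(P)\mathrm{Int}(P)^{-1}$ is an open subgroup of $G$; but $G$ is connected, so $PP^{-1}=G$. In particular $PP^{-1}$ is a subgroup, so two of the three hypotheses of Proposition~\ref{order unit} hold automatically. Thus the entire content of the proposition reduces to establishing the remaining hypothesis: \emph{a connected group forces a normal subsemigroup with non-empty interior to possess an order unit.}

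First I would exploit normality to understand the order structure. Pick any $a \in \mathrm{Int}(P)$. The key structural fact I expect to use is that, because $P$ is normal ($gPg^{-1}=P$ for all $g\in G$), the relation $x\le y$ (meaning $x^{-1}y\in P$) interacts well with conjugation and translation on both sides. Concretely, normality should let me show that $\bigcup_{n\ge 1}\mathrm{Int}(P)a^{-n}$ (equivalently $\bigcup_n Pa^{-n}$) is a \emph{subgroup} of $G$, not merely a subsemigroup: normality gives $a^{-1}P = Pa^{-1}$-type exchange identities, so that the set $S:=\bigcup_{n\ge 1}Pa^{-n}$ is closed under the group operations. Indeed $S$ is clearly closed under multiplication and contains $e$; the crucial point normality buys is that $S$ is closed under inverses, since conjugating $P$ by powers of $a$ returns $P$, allowing one to rewrite $a^n s^{-1}$ back into the form $s' a^{-m}$.

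Next, I would show $S$ is open: it is a union of translates $Pa^{-n}$, each containing the open set $\mathrm{Int}(P)a^{-n}$, and since $a\in\mathrm{Int}(P)$ the sets $\mathrm{Int}(P)a^{-n}$ are increasing and exhaust a neighbourhood of the identity that grows with $n$. An open subgroup of any topological group is also closed (its complement is a union of cosets, each open), and $G$ is connected, so the only non-empty open-and-closed subgroup is $G$ itself. Therefore $S=\bigcup_{n\ge1}Pa^{-n}=G=PP^{-1}$, which is exactly the statement that $a$ is an order unit of $P$. With an order unit in hand, Proposition~\ref{order unit} applies verbatim and yields that every product system over $P$ is concrete.

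The main obstacle I anticipate is the inverse-closure step that promotes $S$ from a subsemigroup to a subgroup; this is precisely where normality is indispensable and where the argument fails for a general right-Ore semigroup without an order unit (as the $ax+b$ example $Q$ in the preceding example illustrates). I would need to verify carefully, using $a^{-n}P a^{n}=P$, that for $s\in P$ and $n\ge 1$ one can express $(sa^{-n})^{-1}=a^{n}s^{-1}$ in the form $s'a^{-m}$ with $s'\in P$; concretely, writing $s^{-1}=s_1 s_2^{-1}$ with $s_i\in P$ (possible since $PP^{-1}=G$) and then using normality to sweep all the negative powers of $a$ to the right. Once that bookkeeping is done the topology does the rest, so I expect the bulk of the genuine work to sit in this single algebraic identity.
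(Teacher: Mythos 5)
Your overall route is the same as the paper's: fix $a \in Int(P)$, show that $\bigcup_{n\geq 1}Pa^{-n}$ is an open subgroup, use connectedness to conclude it equals $G$, and then apply Proposition \ref{order unit}. But the step you yourself single out as the crux --- that normality \emph{alone} makes $S:=\bigcup_{n\geq 1}Pa^{-n}$ closed under inverses --- is false as a purely algebraic claim, and the bookkeeping you sketch is circular. Writing $s^{-1}=s_1s_2^{-1}$ with $s_i \in P$ and sweeping with $a^nPa^{-n}=P$ turns $a^ns^{-1}$ into $s_1'a^ns_2^{-1}$, and then into $s_1'\tilde{s}_2^{-1}a^{n}$ with $\tilde{s}_2 \in P$: conjugation moves powers of $a$ past elements of $P^{-1}$, but it never converts a factor of the form $a^ns_2^{-1}$ into the required shape $s'a^{-m}$. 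Indeed, $a^ns_2^{-1}\in Pa^{-m}$ asks precisely that $a^{n+m}\in P\,(a^{-m}s_2a^{m})$, which is the order-unit property you are trying to prove. A concrete counterexample to the algebraic claim: in $G=\bbz^2$ (discrete, so $P$ is open; abelian, so $P$ is normal) take $P=\{(m,n):n\geq 1\}\cup\{(m,0):m\geq 1\}$ and $a=(1,0)$. Then $P-P=G$, yet $\bigcup_{k}(P-ka)=\{(m,n):n\geq 1\}\cup\{(m,0):m\in\bbz\}$ is a subsemigroup that is \emph{not} a subgroup (it contains $(0,1)$ but not $(0,-1)$), and $a$ is not an order unit. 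Since every hypothesis your identity invokes (normality, $PP^{-1}=G$, $a$ interior) holds in this example, no amount of algebraic sweeping can close the gap: connectedness must enter \emph{before} the subgroup claim, not after it, whereas you invoke it only afterwards.

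The repair is short, and it is also the only correct reading of the paper's own one-line assertion that normality makes the union a subgroup. Normality of $Int(P)$ (inherited from that of $P$, conjugation being a homeomorphism) gives that $S_0:=\bigcup_{n\geq 1}Int(P)a^{-n}$ is a \emph{subsemigroup}, via $ua^{-n}\cdot va^{-m}=u(a^{-n}va^{n})a^{-(n+m)}$; it is open and contains $e=aa^{-1}$. Now symmetrize: $V:=S_0\cap S_0^{-1}$ is an open, symmetric subsemigroup containing $e$, hence an open subgroup of $G$, hence closed, and connectedness forces $V=G$; therefore $S_0=G$, so $\bigcup_{n\geq 1}Pa^{-n}=G$, which is exactly the form of the order-unit hypothesis used in the proof of Proposition \ref{order unit}. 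With this one substitution --- proving $S_0=G$ through the symmetrization $S_0 \cap S_0^{-1}$ rather than through a direct inverse-closure computation --- your argument is complete and coincides with the paper's proof.
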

\textit{Proof.} Since $P$ is normal, $PP^{-1}$ is a group. In view of Prop. \ref{order unit}, it suffices to prove that $P$ has an order unit $a \in P$.  Let $a \in Int(P)$. As $P$ is normal,  $Int(P)$ is normal. The normality of $Int(P)$ implies that $\bigcup_{n=1}^{\infty}Int(P) a^{-n}$ is a  subgroup. Moreover,  $\bigcup_{n=1}^{\infty}Int(P) a^{-n}$ is open. Since $G$ is connected, 
\[
G=\bigcup_{n=1}^{\infty}Int(P) a^{-n}.\]
Hence, $a$ is an order unit for $P$. \hfill $\Box$

\begin{ppsn}
\label{exhausting normal semigroups}
Let $G$ be a discrete countable group, and let $P \subset G$ be a semigroup that is normal in $G$, i.e. $gPg^{-1}=P$ for every $g \in G$. Then, $P$ is right Ore. Also, there exists a sequence of subsemigroups $(P_n)_n$ such that 
\begin{enumerate}
\item[(1)] for every $n \in \bbn$, $P_n$ is right Ore, 
\item[(2)] for every $n \in \bbn$, $P_n$ has an order unit, and
\item[(3)] the sequence $(P_n)_n \nearrow P$. 
\end{enumerate}
\end{ppsn}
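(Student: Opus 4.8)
The plan is to dispose of the right Ore claim first, and then to build the exhausting sequence $(P_n)$ as the sets of elements dominated by powers of a suitably chosen increasing sequence $(a_n)$ in $P$. The one device used throughout is that normality of $P$ is equivalent to the identity $gP=Pg$ for every $g\in G$: for $c\in P$ and $h\in G$ one has $h^{-1}ch\in h^{-1}Ph=P$, so any element of $P$ can be \emph{slid} past any element of $G$ at the cost of replacing it by a conjugate that still lies in $P$. This single move is what keeps all the witnesses we produce inside the relevant subsemigroup.

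For the right Ore property I would argue that normality forces left reversibility, which in a group upgrades to right Ore. Given $a,b\in P$, normality gives $a^{-1}ba\in P$; writing $c:=a^{-1}ba$ yields $ba=ac$, so $ba\in aP\cap bP$ and $P$ is left reversible. Since $P$ sits inside $G$, for $x,y,z,w\in P$ I choose $p,q\in P$ with $yp=zq$, so that $y^{-1}z=pq^{-1}$ and $(xy^{-1})(zw^{-1})=(xp)(wq)^{-1}\in PP^{-1}$; combined with $(xy^{-1})^{-1}=yx^{-1}$ this shows $PP^{-1}$ is a subgroup. The same implication will be reused verbatim for each $P_n$ once $P_n$ is shown to be left reversible.

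The technical heart is a lemma, proved by induction on $m$ using the sliding identity: if $a\ge g_1,\dots,g_r$, then $a^m$ dominates every product of $m$ of the $g_i$; in particular $a\le b$ implies $a^k\le b^k$ for all $k$. Using this, I would enumerate $P=\{g_1,g_2,\dots\}$ and construct $a_n\in P$ inductively as a common upper bound of $\{a_{n-1},g_1,\dots,g_n\}$ (available by left reversibility), so that $(a_n)$ is increasing with $g_i\le a_n$ for $i\le n$. Then I set
\[
P_n:=\{x\in P: x\le a_n^k \text{ for some } k\ge 1\}.
\]

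It remains to verify the four properties, and I expect the order-unit step to be the main obstacle, precisely because the dominating witnesses must be certified to lie in $P_n$ and not merely in $P$. That $P_n$ is a subsemigroup follows from sliding: if $a_n^j=xc$ and $a_n^k=yd$ then $a_n^{j+k}=xy\,(y^{-1}cy)\,d$ with $y^{-1}cy\in P$, so $xy\le a_n^{j+k}$. For the order unit, given $x\le a_n^k$ with witness $c=x^{-1}a_n^k\in P$, I set $p:=a_n^{-k}xa_n^{k+1}=(a_n^{-k}xa_n^k)a_n\in P$ and check $cp=a_n^{k+1}$, whence $c\le a_n^{k+1}$ and so $c\in P_n$; thus $a_n^k=xc$ with $c\in P_n$, i.e. $a_n$ is an order unit of $P_n$. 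An identical conjugation argument shows $P_n$ is left reversible: a common upper bound $a_n^N$ of $x,y\in P_n$ produces witnesses that are again $\le a_n^{N+1}$, hence in $P_n$, so $P_n$ is right Ore by the implication used for $P$. Finally $P_n\nearrow P$, since each $g_i\le a_n\le a_n^2$ gives $g_i\in P_n$ for $n\ge i$ (so $\bigcup_n P_n=P$), while $a_n\le a_{n+1}$ together with the lemma gives $a_n^k\le a_{n+1}^k$, whence $P_n\subseteq P_{n+1}$.
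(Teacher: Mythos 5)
Your proposal is correct and takes essentially the same route as the paper: the paper also picks an increasing cofinal sequence $(s_n)$ (your $(a_n)$, built by the same left-reversibility bootstrap) and sets $P_n:=\{x\in P: x\leq s_n^k \text{ for some } k\}$, with normality doing all the work. The paper simply asserts that $P_n$ is a semigroup with order unit $s_n$, whereas you supply the verifications — the conjugation/sliding lemma and, in particular, the genuinely needed check that the dominating witness $c=x^{-1}a_n^k$ lies in $P_n$ itself rather than merely in $P$ — all of which are carried out correctly.
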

\textit{Proof.} The fact that $PP^{-1}$ is a subgroup follows immediately from the normality assumption on $P$. 
Let $(s_n)_n$ be an increasing cofinal sequence in $P$. For $n \geq 1$, let 
\[
P_n:=\{x \in P: \textrm{ there exists $k \in \bbn$ such that $x \leq s_n^k$}\}.\]
The normality of $P$ ensures that $P_n$ is a semigroup, and $G_n=P_nP_n^{-1}$ is a subgroup of $G$. Moreover, $s_n \in P_n$ is an order unit of $P_n$.   
Thanks to the normality of $P$, $P_n \subset P_{n+1}$. The fact that $(s_n)_n$ is cofinal implies that $\bigcup_{n \geq 1}P_n=P$. \hfill $\Box$.

The following result is a corollary to Prop. \ref{inductive limit}, Prop. \ref{exhausting normal semigroups} and to Prop. \ref{order unit}. 
\begin{crlre}
\label{discrete}
Let $G$ be a discrete countable group, and let $P \subset G$ be a semigroup that is normal in $G$. Then, every product system over $P$ is concrete. 
\end{crlre}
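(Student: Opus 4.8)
The plan is to assemble the corollary directly from the three results it cites, so the work is almost entirely a matter of matching hypotheses rather than proving anything new. The key simplifying observation is that $G$ is discrete: every subset of $G$ is then open, so any subsemigroup of $G$ automatically has non-empty interior. This is what lets me apply Prop. \ref{order unit}, whose statement is phrased for Borel subsemigroups with non-empty interior in a locally compact second countable group, to subsemigroups of a discrete group without further fuss.

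First I would invoke Prop. \ref{exhausting normal semigroups}. The normality of $P$ in the discrete countable group $G$ gives at once that $P$ is right Ore and produces an increasing sequence of subsemigroups with $P_n \nearrow P$, where each $P_n$ is right Ore (so that $G_n := P_nP_n^{-1}$ is a subgroup of $G$) and each $P_n$ carries an order unit. Next, fix an arbitrary product system $E$ over $P$ and write $E_n := E|_{P_n}$ for its restriction to $P_n$. For each fixed $n$ I would apply Prop. \ref{order unit} to $P_n$ regarded as a subsemigroup of the discrete group $G_n = P_nP_n^{-1}$: discreteness supplies the non-empty interior, $P_nP_n^{-1}$ is a subgroup by construction, and $P_n$ has an order unit, so all hypotheses are met. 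Prop. \ref{order unit} then yields that \emph{every} product system over $P_n$ is concrete, and in particular $E_n$ is concrete.

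Finally, since $P$ is right Ore it is left reversible, as recorded at the beginning of Section 2 ($PP^{-1}$ being a subgroup forces $aP \cap bP \neq \emptyset$). With $P$ countable and left reversible, the exhaustion $P_n \nearrow P$, and each $E_n$ concrete, Thm. \ref{inductive limit} applies verbatim and concludes that $E$ is concrete. As $E$ was arbitrary, every product system over $P$ is concrete.

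I do not expect a genuine obstacle here; the only step needing care is the bookkeeping check that the hypotheses of Prop. \ref{order unit} really hold for each $P_n$ inside its own group $G_n$ rather than inside $G$ — in particular that the order unit $a$ furnished by Prop. \ref{exhausting normal semigroups} satisfies $\bigcup_{n}P_n a^{-n} = P_nP_n^{-1}$ in the sense demanded there. Since $G_n$ is itself a discrete (hence locally compact, second countable, Hausdorff) group, this reduces to the order-unit bookkeeping already built into Prop. \ref{exhausting normal semigroups}, and the corollary follows as a clean combination of the preceding propositions.
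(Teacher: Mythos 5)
Your proposal is correct and follows exactly the route the paper intends: the paper gives no separate proof, stating only that the corollary follows from Prop.~\ref{exhausting normal semigroups} (exhaustion by right Ore subsemigroups with order units), Prop.~\ref{order unit} (each $E|_{P_n}$ is concrete, the hypotheses being trivially met since discrete groups make every subsemigroup Borel with non-empty interior), and Thm.~\ref{inductive limit} (left reversibility coming from $PP^{-1}$ being a group). Your hypothesis-checking, including applying Prop.~\ref{order unit} to $P_n$ inside $G_n=P_nP_n^{-1}$, is sound.
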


We have now proved  Thm. \ref{main}   except for the locally compact abelian case which we prove next. 

\begin{thm}
\label{main_abelian}
Let $G$ be a second countable, locally compact abelian group, and let $P \subset G$ be a Borel subsemigroup with non-empty interior. Then, every product system over $P$ is concrete. 
\end{thm}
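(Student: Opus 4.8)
The plan is to reduce the abelian case to the already-settled discrete case, Corollary \ref{discrete}, by means of the inducing machinery of Theorem \ref{induced}; the entire difficulty will be concentrated in producing one good subsemigroup. First I would carry out the harmless reductions. Since $G$ is abelian, $PP^{-1}$ is automatically a subgroup (inverses via $(xy^{-1})^{-1}=yx^{-1}$ and products via $(x_1y_1^{-1})(x_2y_2^{-1})=(x_1x_2)(y_1y_2)^{-1}$), and because $P$ has non-empty interior it equals $Int(P)Int(P)^{-1}$, an open and hence closed subgroup. As only the fibres $E(x)$ with $x\in P$ enter the discussion, I may replace $G$ by $PP^{-1}$ and assume $PP^{-1}=G$, placing us exactly in the hypotheses of Theorem \ref{induced}.

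The heart of the matter is the following claim, which I would isolate as the key lemma: \emph{there exists a countable cofinal subsemigroup $Q\subseteq P$ such that $Q-Q$ is closed in $G$.} Granting this, the proof finishes quickly. Being countable and closed, $K:=Q-Q$ is a countable locally compact Hausdorff group, hence discrete (a countable, locally compact Hausdorff group has an isolated point by Baire, and homogeneity makes every point isolated). Thus $K$ is a discrete countable subgroup of $G$ and $Q$ is a countable subsemigroup of $K$. Since $K$ is abelian, $Q$ is normal in $K$, so Corollary \ref{discrete}, applied to the discrete countable group $K$ and its normal subsemigroup $Q$, shows that every product system over $Q$ is concrete; in particular $E|_Q$ is concrete. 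As $Q$ is a cofinal Borel subsemigroup with $Q-Q=K$ a closed subgroup and $PP^{-1}=G$, Theorem \ref{induced} upgrades concreteness of $E|_Q$ to concreteness of $E$.

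Everything therefore rests on constructing $Q$, and this is where I expect the genuine work to lie. My strategy would be to find a discrete subgroup $K\le G$ for which $K\cap P$ is cofinal and then set $Q:=K\cap P$, which is automatically a countable subsemigroup with $Q-Q\subseteq K$ closed. Cofinality of $K\cap P$ unwinds to the requirement that $K\cap(x+Int(P))\neq\emptyset$ for every $x\in P$; fixing $a\in Int(P)$ and a symmetric open neighbourhood $V$ of $0$ with $a+V\subseteq Int(P)$, and using that $Int(P)$ is an open subsemigroup so that $x+a+V\subseteq x+Int(P)$, this reduces to the single condition $K+V=G$. In the Euclidean factor this is immediate: in $\bbr^n$ a lattice $K$ whose covering radius is smaller than a ball inside $V$ satisfies $K+V=\bbr^n$, and then for each $x$ the point of $K$ lying within $V$ of $x+a$ sits in $K\cap(x+Int(P))\subseteq K\cap P$ and dominates $x$.

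For general $G$ I would invoke the structure theorem, writing $G\cong\bbr^n\times H$ with $H$ possessing arbitrarily small compact open subgroups $W'$, so that $G/(\{0\}\times W')\cong\bbr^n\times\Lambda$ with $\Lambda$ discrete and countable, and attempt $K=K_1\times D$ with $K_1$ a fine lattice as above and $D\le H$ a discrete subgroup covering the $\Lambda$-direction. \textbf{The main obstacle} is precisely that such a discrete $D$ need not exist: the totally disconnected part may have no non-trivial discrete subgroups at all (for instance $\bbq_p$ has none, since the closure of any non-zero cyclic subgroup is compact). The saving observation, which I would prove, is that an open subsemigroup $U=Int(P)$ with $U-U=G$ is forced to be \emph{coarse} in the $H$-direction: modulo a suitable compact open subgroup its image lands in a torsion-rich quotient, where a subsemigroup is automatically a subgroup, so after quotienting by an appropriate $W'$ the non-archimedean factor collapses to something discrete and the construction genuinely reduces to the tractable model $\bbr^n\times\Lambda$. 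Reconciling this quotient reduction with the demand that $Q$ itself sit inside $P$ as a discrete cofinal set is the delicate point I would need to handle with care.
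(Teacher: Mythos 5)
Your reductions and your closing steps are exactly right and coincide with the paper's: reduce to the case $P-P=G$ with $P$ open, isolate the key lemma (a countable cofinal subsemigroup $Q\subseteq P$ with $Q-Q$ closed), observe that $Q-Q$ is then discrete by a Baire argument, and conclude via Corollary \ref{discrete} and Theorem \ref{induced}. The genuine gap is that you never prove the key lemma, and the route you sketch for it would not work as stated. Your plan --- find a discrete subgroup $K\leq G$ with $K+V=G$ and set $Q:=K\cap P$ --- breaks in the totally disconnected direction, as you yourself note (e.g. $\bbq_p$ has no non-trivial discrete subgroups), and the proposed repair by passing to $G/(\{0\}\times W')\cong\bbr^n\times\Lambda$ does not deliver what is needed: a lattice in the quotient pulls back to a $W'$-saturated, non-discrete subset of $G$, and a set-theoretic section of the quotient map is not a homomorphism, so the pullback is neither a subsemigroup of $P$ nor a set with closed difference group. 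Your ``saving observation'' (that open subsemigroups are forced to be groups in the non-archimedean direction) is plausible but unproved, and even granting it, manufacturing $Q$ \emph{inside} $P$ is precisely the point you flag as unresolved. So the heart of the proof is missing.

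For comparison, the paper constructs $Q$ directly and elementarily, with no structure theory at all. Take an increasing cofinal sequence $(s_n)$ in $P$; one may assume $s_n\notin -P$ for all $n$, since otherwise $P=G$ and Prop. \ref{group case} applies. Let $K_n:=\{x\in G:\ \exists k\in\bbn,\ -ks_n\leq x\leq ks_n\}$; since $P$ is open, each $K_n$ is an open (hence closed) subgroup, $K_n\subseteq K_{n+1}$, and $\bigcup_n K_n=G$. If the chain $(K_n)$ stabilises, some $s_N$ is an order unit for $P$ and Prop. \ref{order unit} finishes the proof --- note that this case is one your lattice strategy does not even address separately. Otherwise, pass to a subsequence with $K_n\neq K_{n+1}$ for all $n$, which forces $s_{n+1}$ to have infinite order modulo $K_n$, and let $Q$ be the subsemigroup generated by $\{s_n\}$, clearly countable and cofinal. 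Then $(Q-Q)\cap K_n=\{\sum_{k=1}^{n}m_ks_k:\ m_k\in\bbz\}$, and since each quotient $K_n/K_{n-1}$ is discrete, any convergent sequence in $(Q-Q)\cap K_n$ has eventually constant top coefficient and, recursively, eventually constant coefficients throughout; hence $(Q-Q)\cap K_n$ is closed for every $n$, and $Q-Q$ is closed because $\{K_n\}$ is an increasing open cover. The moral: rather than hunting for a discrete subgroup transverse to $P$ (which need not exist fibrewise), one adapts $Q$ to $P$ by generating it from a cofinal sequence and uses the order-interval subgroups $K_n$ to verify closedness; that is the idea your proposal lacks.
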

\textit{Proof.} Since $P$ has non-empty interior, we do not lose generality  if  we assume that $P-P=Int(P)-Int(P)=G$.  Hence, $Int(P)$ is a cofinal semigroup. 
 In view of Thm. \ref{induced},  it suffices to consider the case when $P$ is open. Thus, let $P \subset G$ be an open subsemigroup such that $P-P=G$. 

Let $(s_n)_n$ be a cofinal sequence in $P$. We can assume that $s_n \leq s_{n+1}$. Note that, by passing to a subsequence, we can assume that, for every $n$, $s_n \notin -P$. Otherwise, the cofinality of $(s_n)_n$ implies that $P=G$ in which case we are done by Prop. \ref{group case}. 
For $n \in \bbn$, let 
\[
K_n:=\{x \in G: \textrm{there exists $k \in \bbn$ such that $-ks_n \leq x \leq ks_n$}\}.\]
Since $P$ is open,  $K_n$ is an open subgroup (and hence closed). Note that $K_n \subset K_{n+1}$, and $\bigcup_{n=1}^{\infty}K_n=G$. 

\

\textit{Case (i):} Suppose that the sequence $(K_n)_n$ stabilises. Choose $N$ such that $K_N=G$. In this case, $s_N$ is an order unit of $P$, and we can apply Prop. \ref{order unit} to conclude the result. 
 
 \textit{Case (ii):} Assume that the sequence $(K_n)_n$ does not stabilise.   Then, by passing to a subsequence, we can assume  for every $n$, $K_n \neq K_{n+1}$. This, in particular, implies that $rs_{n+1} \notin K_n$ unless $r=0$, i.e. $s_n+K_{n-1}$ is not of finite order in the discrete group $K_n/K_{n-1}$. Let $Q$ be the subsemigroup generated by $\{s_n: n \geq 1\}$. Clearly, $Q$ is a cofinal semigroup.  Let $H:=Q-Q$. We claim that $H$ is closed in $G$. Since $\{K_n\}_{n \geq 1}$ is an increasing open cover, it suffices to prove that $H \cap K_n$ is closed for each $n$. Since for $r \neq 0$, $rs_k \notin K_n$  for $k >n$, 
\[
H \cap K_n=\{\sum_{k=1}^{n}m_ks_k: m_k \in \bbz\}.\]
Let $n \geq 1$ be given. Let $(x_r)_r$ be a sequence in $H \cap K_n$ such that $(x_r)_r \to x$. Since $K_n$ is closed, $x \in K_n$. Write, for $r \geq 1$, 
\[
x_r:=\sum_{i=1}^{n}m_i^{(r)}s_i\]
where $m_i \in \bbz$. Since $K_{n-1}$ is a clopen subgroup of $K_n$, $K_n/K_{n-1}$ is discrete. This forces that the sequence $(m_n^{(r)}s_n+K_{n-1})_r$ is eventually constant. By assumption, $s_n+K_{n-1}$ is not of finite order. Hence, the sequence $(m_{n}^{(r)})_r$ is eventually constant. 

Then, the sequence $(\sum_{i=1}^{n-1}m_i^{(r)}s_i)_r$ converges in $K_{n-1}$ (note that $K_{n-1}$ is closed in $K_n$). Proceeding recursively, we can conclude that for every $i \in \{1,2,\cdots,n\}$, the sequence $(m_i^{(r)})_r$ is eventually constant. Hence, $H \cap K_n$ is closed for every $n$. Consequently, $H$ is closed. 
Thus, $Q$ is a countable, cofinal semigroup such that $Q-Q$ is closed. Appealing to Thm. \ref{induced} and Corollary \ref{discrete}, we can conlcude that every product system over $P$ is concrete. \hfill $\Box$

\section{A non-example}
In this section, we show that if $P$ is a discrete semigroup that does not embedd inside a group, then there is a product system over $P$ that is not concrete. 
 Recall that a semigroup $P$ is said to be group-embeddable if there exists a group $G$ and a semigroup homomorphism $\tau:P \to G$ such that $\tau$ is injective. As alluded to in the introductory section, there are countable, cancellative semigroups that do not embedd in a group, and the first such example was due to Malcev (\cite{Malcev}).  We refer the reader to \cite{Edwardes} for a recent construction of such examples of semigroups that are motivated by questions in $C^{*}$-algebras. For a historical account of the embedding problem of monoids inside groups, we refer the reader to \cite{Hollings}. 
 
 Let $P$ be a discrete, cancellative, countable semigroup that is not group-embeddable. Let $V$ be the left regular representation of $P$ on $\ell^2(P)$, i.e. for $x \in P$, let $V_x$ be the isometry on $\ell^2(P)$ defined by 
 \[
 V_{x}(\delta_y)=\delta_{xy}.\]
Here, $\{\delta_y:y \in P\}$ is the standard orthonormal basis of $\ell^2(P)$. Then, $V:=\{V_x\}_{x \in P}$ is a semigroup of isometries on $\ell^2(P)$. Let $\alpha:=\{\alpha_x\}_{x \in P}$ be the CCR flow associated with $V$ on $B(\Gamma(\ell^2(P))$. Here, $\Gamma(\ell^2(P))$ denotes the symmetric Fock space of $\ell^2(P)$. Recall that, for $x \in P$, the endomorphism $\alpha_x$ on the set of Weyl operators $\{W(\xi):\xi \in \ell^2(P)\}$ is given by the equation
\[
\alpha_x(W(\xi))=W(V_x\xi).\]
Observe that the map $x \to \alpha_x$ is injective, i.e. for $x, y \in P$, $\alpha_x=\alpha_y$ if and only if $x=y$. 

Let $E:=\{E(x)\}_{x \in P}$ be the product system associated with $\alpha$. Then, the opposite product system $E^{op}$ is a product system over the opposite semigroup $P^{op}$. The opposite $E^{op}$ is defined as follows:  for $x \in P^{op}$, the fibre $E^{op}(x)=E(x)$ and the product is defined by 
\[
u \odot v=vu\]
for $u \in E^{op}(x)$ and $v \in E^{op}(y)$.  Then, $E^{op}$ is a product system over $P^{op}$. 

\begin{ppsn}
\label{example}
With the above notation, the product system $E^{op}$ over $P^{op}$ is not concrete. 
\end{ppsn}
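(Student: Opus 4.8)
The plan is to argue by contradiction. Since $P^{op}$ is discrete, weak measurability is automatic, so if $E^{op}$ were concrete it would admit an essential representation $\phi$ on a separable Hilbert space $H$ (using Lemma \ref{reduction} to arrange separability). From $\phi$ I will extract an isometric representation of $P^{op}$ and show that, together with the one-particle data of the CCR product system, it forces $P$ into a group, contradicting the choice of $P$. The starting point is the vacuum unit: the CCR flow carries a canonical unit $\Omega=\{\Omega_x\}$, where $\Omega_x\in E(x)$ is the intertwiner attached to the Fock vacuum (the exponential vector $e(0)$), satisfying $\Omega_x\Omega_y=\Omega_{xy}$ and $\langle\Omega_x|\Omega_x\rangle=1$. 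Inside $E^{op}$ it stays a unit, with $\Omega_x\odot\Omega_y=\Omega_y\Omega_x=\Omega_{yx}$. Setting $S_x:=\phi(\Omega_x)$, the representation axioms give $S_x^*S_x=1$ and $S_xS_y=\phi(\Omega_x\odot\Omega_y)=S_{yx}$, so $x\mapsto S_x$ is an isometric anti-representation of $P$. I record the consequence that whenever $ax=by$ one has $S_xS_a=S_{ax}=S_{by}=S_yS_b$, and likewise for the other products of a Malcev configuration.

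The goal is then to promote $\{S_x\}$ to an \emph{injective} homomorphism of $P^{op}$ into a group $\mathcal{G}$; writing $s_x$ for the image (so $s_xs_y=s_{yx}$ with each $s_x$ invertible), and recalling that $P$ embeds in a group iff $P^{op}$ does, this yields the contradiction as follows. By Malcev's analysis of the embedding problem (\cite{Malcev}), the failure of group-embeddability of the cancellative semigroup $P$ is witnessed by a finite configuration of product-identities violated by one further product; the prototypical instance is $a,b,c,d,x,y,u,v\in P$ with $ax=by$, $cx=dy$, $au=bv$ but $cu\neq dv$. In $\mathcal{G}$ the first three read $s_xs_a=s_ys_b$, $s_xs_c=s_ys_d$, $s_us_a=s_vs_b$, whence $s_x^{-1}s_y=s_as_b^{-1}=s_u^{-1}s_v$, so $s_cs_d^{-1}=s_x^{-1}s_y=s_u^{-1}s_v$, i.e. $s_us_c=s_vs_d$, i.e. $s_{cu}=s_{dv}$. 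Injectivity forces $cu=dv$, contradicting the configuration. Thus the whole problem is reduced to legitimately \emph{inverting} the isometries $S_x$ inside the essential representation.

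The hard part, and the only place where the specific structure of $\alpha$ must be used, is exactly this inversion, and I expect it to be the main obstacle. It cannot come from the vacuum isometries alone: the regular anti-representation $R_x\delta_z=\delta_{zx}$ already gives isometries with $R_xR_y=R_{yx}$ and $R_x^*R_x=1$ for \emph{every} cancellative $P$, so purely isometric data can never detect non-embeddability. What an essential representation additionally provides, and what I would exploit, is the full one-particle structure: for $\zeta\in(V_x\ell^2(P))^\perp$ the exponential vectors $e(\zeta)\in E(x)$ give operators $T_x(\zeta):=\phi(e(\zeta))$ with $T_x(\zeta_1)T_y(\zeta_2)=T_{yx}(\zeta_2+V_y\zeta_1)$, with $T_x(\eta)^*T_x(\zeta)$ the scalar $e^{\langle\zeta|\eta\rangle}$, and with $S_xT_y(\zeta)=T_{yx}(\zeta)$. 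Since $\dim E(x)=\infty$, essentiality (via the analogue of Proposition \ref{essential}, giving $\sum_i\phi(e_i)\phi(e_i)^*=1$ for an orthonormal basis of $E(x)$) should exhibit $H$ as the Fock space built over the proper subspace $S_xH$ from the one-particle space $(V_x\ell^2(P))^\perp$, and hence should produce genuine Weyl unitaries $W_x(\zeta)$ on $H$ with $T_x(\zeta)=W_x(\zeta)S_x$. These unitaries, together with the shift relations above, encode the entire left-regular geometry of $P$, and my aim is to read off from them the formal quotients $s_x^{-1}s_y$ as honest invertible elements, thereby constructing $\mathcal{G}$ and the embedding $P^{op}\to\mathcal{G}$. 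I anticipate that the genuine technical heart is precisely verifying that the Weyl/creation relations are rigid enough to make $x\mapsto s_x$ well defined and injective — that is, that essentiality really does upgrade the proper isometries $S_x$ to invertible elements — this being the single point at which a naive isometric argument breaks down.
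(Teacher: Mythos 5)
Your proposal contains a genuine gap, and you flag it yourself: the entire argument funnels into the step ``upgrade the proper isometries $S_x$ to invertible elements of a group $\mathcal{G}$,'' and that step is never carried out. Everything after ``the hard part\dots is exactly this inversion'' is a plan, not a proof: the claim that essentiality should exhibit $H$ as a Fock space over $S_xH$ with one-particle space $(V_x\ell^2(P))^\perp$, producing Weyl unitaries $W_x(\zeta)$ with $T_x(\zeta)=W_x(\zeta)S_x$, is a nontrivial structural reconstruction that does not follow from $[\phi(E(x))H]=H$ in any routine way, and no mechanism is given for reading off the quotients $s_x^{-1}s_y$ as honest group elements. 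Your own observation that the regular anti-representation $R_x\delta_z=\delta_{zx}$ satisfies all the isometric relations for \emph{every} cancellative $P$ shows correctly that the vacuum unit alone cannot work, but the route you propose around this obstacle is left entirely conjectural. (Two smaller points: the Malcev-configuration analysis is superfluous --- once you have an \emph{injective} semigroup homomorphism of $P^{op}$ into a group you already have a group embedding, with no quasi-identities needed --- and it is also inaccurate as stated, since Malcev proved that non-embeddability is \emph{not} always witnessed by any single finite prototypical configuration.)

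The paper closes this gap with a device (borrowed from Arveson's extension trick in \cite{Arv_Spectral1990}) that sidesteps the inversion problem entirely: instead of working with a single unit, take an orthonormal basis $\{T_i(x)\}_{i=1}^{d(x)}$ of the whole fibre $E(x)$ and form, on $K:=\Gamma(\ell^2(P))\otimes H$, the operator
\begin{equation*}
U_x:=\sum_{i=1}^{d(x)}T_i(x)\otimes \phi(T_i(x))^*.
\end{equation*}
Each summand is a partial isometry; summing, the first tensor leg gives $\sum_i T_i(x)T_i(x)^*=\alpha_x(1)=1$, while on the second leg essentiality gives $\sum_i\phi(T_i(x))\phi(T_i(x))^*=1$ (the analogue of Prop.~\ref{essential} that you cite, but applied where it actually does the work). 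Hence $U_x$ is \emph{unitary} outright --- no Fock-space reconstruction, no inversion of isometries. Basis-independence of $U_x$ plus anti-multiplicativity of $\phi$ gives $U_xU_y=U_{xy}$, and the intertwining property $T_i(x)A=\alpha_x(A)T_i(x)$ yields $U_x(A\otimes 1)U_x^*=\alpha_x(A)\otimes 1$; since $x\mapsto\alpha_x$ is injective for the CCR flow of the left regular representation, $x\mapsto U_x$ is an injective homomorphism of $P$ into the unitary group of $K$, contradicting non-embeddability. So your overall strategy (contradiction via a group embedding extracted from an essential representation of $E^{op}$) matches the paper's, but the single idea that makes it work --- pairing the concrete intertwiners with the essential representation across a full orthonormal basis of each fibre so that unitarity is automatic --- is missing from your attempt, and without it the proof does not go through.
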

\textit{Proof.} Suppose $\phi$ is an essential representation of $E^{op}$ on a separable Hilbert space $H$. In other words,  the map $\displaystyle \phi:\coprod_{x \in P}E(x) \to B(H)$ satisfies the following conditions:
\begin{enumerate}
\item[(i)] for $x \in P$, $u,v \in E(x)$, $\phi(v)^*\phi(u)=\langle u |v \rangle$,
\item[(ii)] for $x, y \in P$, $u \in E(x)$ and $v \in E(y)$, $\phi(uv)=\phi(v)\phi(u)$, and
\item[(iii)] for $x \in P$, $[\phi(E(x))H]=H$.  
\end{enumerate} 

We follow Arveson (\cite{Arv_Spectral1990}) to make use of  $\phi$ and the concrete representation of $E$ on $\Gamma(\ell^2(P))$ to define   a semigroup of unitary operators on the Hilbert space $\Gamma(\ell^2(P))\otimes H$. 
As mentioned in the introduction, Arveson employed this trick to show that  every  one parameter $E_0$-semigroup on $B(H)$ extends to a group of automorphisms. 

Let $K:=\Gamma(\ell^2(P))\otimes H$, and let $\mathcal{U}(K)$ be the group of unitary operators on $K$. For $x \in P$, let $d(x)$ be the dimension of $E(x)$, and let $\{T_i(x)\}_{i=1}^{d(x)}$ be an orthonormal basis for $E(x)$. Define
\begin{equation}
\label{arveson unitaries}
U_x:=\sum_{i=1}^{d(x)}T_i(x)\otimes \phi(T_i(x))^*.\end{equation}
If we set $w_i:=T_{i}(x) \otimes \phi(T_i(x))^*$, then $\{w_i\}_{i=1}^{d(x)}$ is a family of partial isometries such that 
\[
\sum_{i}w_iw_i^{*}=\sum_{i}w_i^{*}w_i=1.\]
Hence, $U_x$ is a unitary operator. 

Note that $U_x$ does not depend on the orthonormal basis chosen. This along with the fact that $\phi$ is an anti-multiplicative and the fact that the multiplication map \[E(x) \otimes E(y) \ni S \otimes T \to ST \in E(xy)\] is a unitary operator imply that $U_xU_y=U_{xy}$ for $x, y \in P$. In other words, the map $P \ni x \to U_x \in \mathcal{U}(K)$ is a semigroup homomorphism. 

We claim that  for $A \in B(\Gamma(\ell^2(P)))$ and $x \in P$, 
\begin{equation}
\label{extension}
U_x(A \otimes 1)U_x^*=\alpha_x(A) \otimes 1.
\end{equation}
To see the last equality, let $x \in P$ and $A \in B(\ell^2(P))$, and calculate as follows to observe that 
\begin{align*}
U_x(A \otimes 1)&=\sum_{i=1}^{d(x)}T_i(x)A \otimes \phi(T_i(x))^*\\
&=\sum_{i=1}^{d(x)}\alpha_x(A)T_i(x) \otimes \phi(T_i(x))^{*} ~~~(\textrm{since $T_i(x) \in E_\alpha(x)$})\\
&=(\alpha_x(A) \otimes 1)\sum_{i=1}^{d(x)}T_i(x)\otimes \phi(T_i(x))^*\\
&=(\alpha_x(A) \otimes 1)U_x.
\end{align*} 
This proves the claim. 

Since $\alpha_x=\alpha_y \implies x=y$, it follows from Eq. \ref{extension} that the map $P \ni x \to U_x  \in \mathcal{U}(K)$ is an embedding which is a contradiction as $P$ is not group-embeddable. Hence, the product system $E^{op}$ over $P^{op}$ is not concrete. \hfill $\Box$

The ideas of the above proof can be used to derive a necessary condition a discrete cancelative semigroup must satisfy if every product system over it is concrete. Our condition says that if we expect that every product system over a semigroup $P$ comes from an $E_0$-semigroup over $P$, then the semigroup $P$ needs to be `amenable' to dilation theory at least at the level of sets. To explain it, let us fix some terminology. Let $P$ be a countable, cancellative, discrete semigroup. For a non-empty set $X$, denote the set of injective maps from $X \to X$ by $End(X)$, and denote the set of bijective maps from $X \to X$ by $Perm(X)$. Note that $End(X)$ is a monoid, and $Perm(X)$ is the permutation group. 
A map $\phi:P \to End(X)$ is called an injective right action of $P$ on $X$ if $\phi$ is anti-multiplicative, i.e. \[\phi_a \circ \phi_b=\phi_{ba}\] for $a,b \in P$. We call $\phi$ a bijective action if the range of $\phi$ is contained in $Perm(X)$. As usual, for $x \in X$ and $a \in P$, we denote $\phi_a(x)$ by $xa$.

\begin{dfn}
Let $X$ be a set. Let $\phi:P \to End(X)$ be an injective right action. We say $\phi$ admits a bijective dilation if there exists a set $Y$, a bijective right action $\psi:P \to Perm(Y)$, and an injective map $i:X \to Y$ such that for $a \in P$,
\[
\psi_a(i(x)):=i(\phi_a(x)).\]
\end{dfn}

We say a semigroup $P$ has Property \textbf{(D)} if every injective right action admits a bijective dilation.

\begin{ppsn}
Let $P$ be a discrete, countable semigroup.  If every product system over $P$ is concrete, then $P$ has Property \textbf{(D)}. 
\end{ppsn}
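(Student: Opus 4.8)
The plan is to run Arveson's unitary trick from the proof of Proposition \ref{example} \emph{in reverse}: rather than deriving a contradiction, I want to manufacture an honest set-theoretic bijective action out of the unitaries that an essential representation produces. Assume first that $X$ is countable. Given an injective right action $\phi:P\to End(X)$, the maps $V_a\delta_x:=\delta_{\phi_a(x)}$ define isometries on $\ell^2(X)$ with $V_aV_b=V_{ba}$, so the CCR flow $\alpha_a(W(\xi))=W(V_a\xi)$ satisfies $\alpha_a\alpha_b=\alpha_{ba}$ and is therefore an $E_0$-semigroup over $P^{op}$ on $B(\Gamma(\ell^2(X)))$. Let $E_\alpha$ be its product system over $P^{op}$; then the opposite $E_\alpha^{op}$ is a product system over $(P^{op})^{op}=P$, with separable fibres since $X$ is countable. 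By hypothesis every product system over $P$ is concrete, so $E_\alpha^{op}$ admits an essential representation $\psi$ on a separable Hilbert space $H$ (measurability being automatic as $P$ is discrete); concretely, $\psi(TS)=\psi(S)\psi(T)$ for intertwiners $T\in E_\alpha(a)$, $S\in E_\alpha(b)$, and $\psi(T)^{*}\psi(T')=\langle T'|T\rangle$.

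Next, exactly as in Eq. \ref{arveson unitaries}, I would set $U_a:=\sum_i T_i(a)\otimes\psi(T_i(a))^{*}$ on $K:=\Gamma(\ell^2(X))\otimes H$, where $\{T_i(a)\}_{i=1}^{d(a)}$ is an orthonormal basis of $E_\alpha(a)$. The computations are verbatim those of Proposition \ref{example}: each $U_a$ is unitary (here it is the essentiality of $\psi$, i.e. $\sum_i\psi(T_i(a))\psi(T_i(a))^{*}=1$, that forces $U_a^{*}U_a=1$), $U_a$ is independent of the chosen basis, $U_aU_b=U_{ba}$ (using $\psi(T_j(b))\psi(T_i(a))=\psi(T_i(a)T_j(b))$ and that $\{T_i(a)T_j(b)\}_{i,j}$ is a basis of $E_\alpha(ba)$), and $U_a(A\otimes 1)U_a^{*}=\alpha_a(A)\otimes 1$. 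In particular, writing $\Phi_a:=\mathrm{Ad}(U_a)$, the family $\{\Phi_a\}$ is a right action of $P$ by automorphisms, $\Phi_a\Phi_b=\Phi_{ba}$, and $\Phi_a(W(\delta_x)\otimes 1)=W(\delta_{\phi_a(x)})\otimes 1$.

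The decisive step, and the one I expect to be the conceptual crux, is to descend from these Hilbert-space unitaries to a genuine action on a \emph{set}. For this I would put $i(x):=W(\delta_x)\otimes 1\in\mathcal{U}(K)$, which is injective since distinct vectors give distinct Weyl unitaries, let $\mathcal{G}\le\mathrm{Aut}(B(K))$ be the group generated by $\{\Phi_a:a\in P\}$, and define $Y:=\{\Phi(i(x)):\Phi\in\mathcal{G},\ x\in X\}$, the $\mathcal{G}$-orbit of $i(X)$ inside $\mathcal{U}(K)$ viewed as a plain set. Since $\Phi_a\mathcal{G}=\mathcal{G}$, each $\Phi_a$ maps $Y$ bijectively onto $Y$, so $\psi_a:=\Phi_a|_Y\in Perm(Y)$ yields a bijective right action $\psi:P\to Perm(Y)$ with $\psi_a\psi_b=\psi_{ba}$; moreover $i(X)\subset Y$ and $\psi_a(i(x))=\Phi_a(i(x))=i(\phi_a(x))$, so $i$ is the required $P$-equivariant injection. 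Thus $\phi$ admits a bijective dilation, i.e. $P$ has Property \textbf{(D)} for countable $X$.

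Finally, for an arbitrary (possibly uncountable) set $X$ I would reduce to the countable case. Because $P$ is countable, each $\phi$-orbit $\{x\}\cup\{\phi_a(x):a\in P\}$ is countable, so $X$ is the directed union of its countable $\phi$-invariant subsets $S$, each carrying an injective right action $\phi|_S$ that is bijectively dilatable by the above. Passing to the limit, I would consider the universal bijective right action of $P$ generated by $(X,\phi)$ and argue that its structure map $X\to\widehat{Y}$ is injective: any identification it forces involves only finitely many relations, hence lies inside some countable invariant $S$, where it would already be forced in $S$'s universal object and therefore, via the morphism to the honest dilation $i_S$, would contradict injectivity of $i_S$. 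I expect this gluing/colimit argument, together with the bookkeeping required to keep the Fock spaces separable so that the hypothesis applies, to be the main technical (as opposed to conceptual) obstacle; the operator-algebraic heart is a direct adaptation of Proposition \ref{example}.
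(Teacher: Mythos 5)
Your proof is correct, and its operator-algebraic core is verbatim the paper's: the same CCR flow over $P^{op}$ built from $V_a\delta_x=\delta_{\phi_a(x)}$, the same opposite product system over $P$, the same unitaries $U_a=\sum_i T_i(a)\otimes\psi(T_i(a))^{*}$ as in Prop. \ref{example}, and the same verification that $a\mapsto U_a$ is anti-multiplicative and implements $\alpha$ on the first tensor factor; your observation that it is precisely essentiality of $\psi$ that forces $U_a^{*}U_a=1$ is exactly right. You deviate in two places, and in both you work harder than the paper does. First, for the dilating set you take the $\mathcal{G}$-orbit of $i(X)$ inside $\mathcal{U}(K)$; the paper simply takes $Y=\mathcal{U}(K)$, the full unitary group of $K$, with $\psi_a:=Ad(U_a)$ --- each $Ad(U_a)$ is already a bijection of $\mathcal{U}(K)$ and $Ad(U_a)\circ Ad(U_b)=Ad(U_aU_b)=Ad(U_{ba})$, so no orbit construction is needed (your restricted version is of course also a valid dilation, just a sub-dilation of the paper's). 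Second, the reduction to countable $X$, which you flag as the main technical obstacle and only sketch via a universal/colimit construction, is in fact a one-liner requiring no gluing along a directed system and no universal object: partition $X$ into the equivalence classes of the relation generated by $x\sim\phi_a(x)$; each class is countable (each point has countably many forward images, and by injectivity at most one $\phi_a$-preimage for each of the countably many $a\in P$), each class is $\phi$-invariant, and the disjoint union of bijective dilations of the restrictions to the classes is a bijective dilation of $\phi$. The paper asserts this reduction without comment, and with this one-line replacement for your colimit sketch your argument is complete and coincides with the paper's.
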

\textit{Proof.} Let $\phi:P \to End(X)$ be an injective right action. To show that $\phi$ has a bijective dilation, it suffices to consider the case when $X$ is countable. Define an isometric representation of $P^{op}$ on $\ell^2(X)$
by the formula 
\[
V_{a}(\delta_x)=\delta_{xa}.\]
Here, $\{\delta_x\}_{x \in X}$ is the standard orthonormal basis of $\ell^2(X)$. As in Prop. \ref{example}, consider the CCR flow associated with $V$ over $P^{op}$, denote its product system by $E$, and let $F:=E^{op}$ be the opposite product system
which is now a product system over $P$. If $F$ admits an essential representation on a Hilbert space $H$, then as in Prop. \ref{example}, we can define a family of unitary operators  $\{U_a\}_{a \in P}$ on the Hilbert space  $K:=\Gamma(L^2(X))\otimes H$
such that 
\begin{enumerate}
\item[(C1)] for $a,b \in P$, $U_aU_b=U_{ba}$, and
\item[(C2)] for $a \in P$ and $\xi \in L^2(X)$, $U_a(W(\xi) \otimes 1)U_a^{*}=W(V_a\xi)$.
\end{enumerate}
Let $Y$ be the unitary group of $K$.  For $a \in P$, let $\psi_a:Y \to Y$ be the bijection defined by  $\psi_a:=Ad(U_a)$. Condition (C1) ensures that $\psi:P \to End(Y)$ is a right action. Also, $\psi$ is a bijective action. For $x \in X$, let $i:X \to Y$ be defined by
\[
i(x)=W(\delta_x)\otimes 1.\]
Then, Condition (C2) ensures that $\psi$ is a dliation of $\phi$. The proof is complete. \hfill $\Box$

\begin{rmrk}
Let $P$ be a cancellative semigroup. 
 If $P$ is cancellative, then $P$ acts  on itself by multiplcation on the right. If this action admits a bijective dilation, then it is immediate to see that $P$ is group-embeddable. Thus, if $P$ has Property \textbf{(D)}, then $P$ is group-embeddable. 
  Left reversible semigroups and the free semigroup $\mathbb{F}_{n}^{+}$ satisfy Property \textbf{(D)}. 
    Does every subsemigroup of a group have Property \textbf{(D)}? The author does not know the answer to this. 
   Li mentions in \cite{Li_Oberwolfach} (Chapter 5, Section 5.4.1) that if $R$ is an integral domain that is not a field, then the $ax+b$-semigroup $R \rtimes R^{*}$ is right reversible, but not left reversible.  Does this semigroup have property \textbf{(D)}? 
    Does every product system over $R \times R^{*}$ come from an $E_0$-semigroup? 
     
\end{rmrk}

\section{Extension of a result of Liebscher}
 We record yet another instance where  the group of unitaries considered in Prop. \ref{example} and the automorphism group that it determines comes handy. 
We show that in the setting of  abelian subsemigroups of locally compact groups, two product systems are isomorphic if and only if they are algebraically isomorphic.  In the $1$-parameter case, this result is due to Liebscher (see Corollary 7.16 of \cite{Liebscher}). The author  believes that the proof presented in this section  will have  its own appeal. 

We start with a  lemma which is an application of the fact that symmetric multipliers on abelian groups are coboundaries. 

\begin{lmma}
\label{Wigner_kind}
Let $G$ be a locally compact, second countable, Hausdorff abelian group. Let $H$ be a separable Hilbert space. Suppose $U:=\{U_x\}_{x \in G}$ is a group of unitary operators on $H$.  For $x \in G$, let $\gamma_x:=Ad(U_x)$. Then, $\gamma:=\{\gamma_x\}_{x \in G}$ is an $E_0$-semigroup over $G$ if and only if there exists a homomorphism $\chi:G \to \bbt$, not necessarily continuous, such that $\chi U:=\{\chi(x)U_x\}_{x \in G}$ is a weakly measurable group of unitary operators. 
\end{lmma}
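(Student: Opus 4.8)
The plan is to prove both implications, the reverse (sufficiency) being routine and the forward (necessity) direction carrying the real content. For the reverse direction, suppose $\chi$ is a homomorphism with $\chi U:=\{\chi(x)U_x\}$ weakly measurable. Since $|\chi(x)|=1$, the scalar cancels in the adjoint action, so $\gamma_x=Ad(U_x)=Ad(\chi(x)U_x)$. Writing $V_x:=\chi(x)U_x$, weak measurability of $V$ together with Pettis' theorem (weak and strong measurability coincide for a separable $H$) makes $x\mapsto V_x^*\xi$ and $x\mapsto V_x^*\eta$ strongly measurable, whence $x\mapsto\langle\gamma_x(A)\xi|\eta\rangle=\langle AV_x^*\xi|V_x^*\eta\rangle$ is measurable. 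As $\gamma$ is already a group of normal, unital $^*$-automorphisms, this shows $\gamma$ is an $E_0$-semigroup over $G$.

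For the forward direction, assume $\gamma$ is an $E_0$-semigroup, i.e. $\gamma$ is weakly measurable. The first step is to produce a \emph{weakly measurable} family $V:=\{V_x\}_{x\in G}$ of unitaries with $Ad(V_x)=\gamma_x$. Weak measurability of $\gamma$ says precisely that $x\mapsto\gamma_x$ is a Borel map into $Aut(B(H))$, and the latter is Borel isomorphic to the projective unitary group $\mathcal{U}(H)/\bbt$. Choosing a Borel cross section of the quotient $\mathcal{U}(H)\to\mathcal{U}(H)/\bbt$ (which exists since $\mathcal{U}(H)$ is Polish and $\bbt$ is a compact central subgroup) and composing, I obtain a Borel, hence weakly measurable, family $V$ with $Ad(V_x)=\gamma_x$. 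Since $V_x$ and $U_x$ implement the same automorphism, $V_x=\lambda(x)U_x$ for some (possibly non-measurable) function $\lambda:G\to\bbt$.

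Next I would analyse the failure of $V$ to be a homomorphism. Put $\sigma(x,y):=V_xV_yV_{xy}^{-1}$. Because $Ad(V_xV_y)=\gamma_{xy}=Ad(V_{xy})$, the operator $\sigma(x,y)$ is a scalar in $\bbt$, and in terms of $\lambda$ one computes $\sigma(x,y)=\lambda(x)\lambda(y)\lambda(xy)^{-1}$, the coboundary $\delta\lambda$. Fixing a unit vector $\xi_0$ gives $\sigma(x,y)=\langle V_xV_y\xi_0|V_{xy}\xi_0\rangle$, which exhibits $\sigma$ as a measurable function, since $V$ is strongly measurable and multiplication on $G$ is continuous. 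Moreover, as $G$ is abelian and $U$ is a homomorphism, the $U_x$, and hence the $V_x$, commute, so $\sigma(x,y)=\sigma(y,x)$; thus $\sigma$ is a \emph{symmetric} measurable $\bbt$-valued multiplier on $G$.

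The crucial input is then the fact that a symmetric Borel multiplier on a second countable locally compact abelian group is a Borel coboundary: there is a measurable $b:G\to\bbt$ with $\sigma(x,y)=b(x)b(y)b(xy)^{-1}$. Setting $\chi:=\lambda b^{-1}$, a direct computation gives $\delta\chi=\delta\lambda\cdot(\delta b)^{-1}=\sigma\sigma^{-1}=1$, so $\chi:G\to\bbt$ is a homomorphism (not assumed measurable). Finally $\chi(x)U_x=\lambda(x)b(x)^{-1}U_x=b(x)^{-1}V_x$, and since $b$ is measurable and $V$ weakly measurable, $\chi U$ is a weakly measurable group of unitaries, completing the proof. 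The two genuinely non-trivial points, and where I expect the main obstacles, are the measurable selection of the implementing unitaries $V$ (requiring the Borel identification of $Aut(B(H))$ with the projective unitary group and a Borel cross section) and the Borel version of ``symmetric multipliers are coboundaries''; both are standard but deserve careful citation.
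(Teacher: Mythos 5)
Your proof is correct and takes essentially the same route as the paper: produce a weakly measurable implementing family $V_x=\lambda(x)U_x$ (the paper cites \cite{Arveson_invitation} for this selection step rather than spelling out the Borel cross section of $\mathcal{U}(H)\to\mathcal{U}(H)/\bbt$), observe that the resulting Borel multiplier $\sigma=\delta\lambda$ is symmetric because $U$ is a homomorphism and $G$ is abelian, and invoke the Kleppner-type theorem that symmetric Borel multipliers on $G$ are measurable coboundaries to extract the character $\chi$. The easy direction and the final bookkeeping ($\chi U=b^{-1}V$ weakly measurable) also match the paper's argument.
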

\textit{Proof.} The `only if' part is obvious. Assume now that $\gamma$ is an $E_0$-semigroup. Then, there exists a weakly measurable family of unitary operators $W:=\{W_x\}_{x \in G}$ on $H$ such that $\gamma_x=Ad(W_x)$ for $x \in G$ \footnote{The proof of the existence of such a weakly measurable family is not difficult and it follows from the representation theory of the $C^{*}$-algebra of compact operators. This is also a special case  of the fact that the product system of an $E_0$-semigroup does satisfy the axioms required for it be a product system. We can also apply a result from \cite{Arveson_invitation} (see Corollary to Lemma 4.1.4 of \cite{Arveson_invitation}) to $K(H)$ and  can justify the existence of such a weakly measurable family.}. 
 The equality $\gamma_x \circ \gamma_y=\gamma_{x+y}$ implies that for $x,y \in G$, there exists $\omega(x,y) \in \bbt$ such that 
\begin{equation}
\label{Borel multiplier1}
W_{x}W_y=\omega(x,y)W_{x+y}.\end{equation}
Clearly, $\omega:G \times G \to \bbt$ is Borel measurable. The fact that $(W_xW_y)W_z=W_x(W_yW_z)$ for $x,y,z \in G$ implies that $\omega$ is a multiplier, i.e for $x,y,z \in G$, 
\[
\omega(x,y)\omega(x+y,z)=\omega(x,y+z)\omega(y,z).\]

Since, for $x \in G$, $Ad(U_x)=Ad(W_x)$, it follows that $U_x^*W_x$ is a scalar for every $x \in G$. Let $f:G \to \bbt$ be the map such that $W_x=f(x)U_x$ for $x \in G$. Eq. \ref{Borel multiplier1} together with the assumption that $\{U_x\}_{x \in G}$ is a group of unitary operators imply that for $x,y \in G$, 
\[
\omega(x,y)=\frac{f(x)f(y)}{f(x+y)}.\]
Hence, $\omega$ is symmetric. It follows from  Lemma 7.2 of \cite{Kleppner} that there exists a measurable function $g:G \to \bbt$ such that for $x,y \in G$, 
\[
\frac{f(x)f(y)}{f(x+y)}=\omega(x,y)=\frac{g(x)g(y)}{g(x+y)}.\] Then, $\chi=\overline{g}f$ is a homomorphism, and $\chi U=\{\chi(x)U_x\}_{x \in G}=\{\overline{g}(x)W_x\}_{x \in G}$ is weakly measurable. The proof is complete. \hfill $\Box$

The next theorem shows that, in the commutative setting, any two measurable structures on an algebraic product system differ by a character which need not be measurable. 
\begin{thm}
\label{Liebscher}
Let $G$ be a locally compact abelian group, and let $P \subset G$ be a Borel subsemigroup with non-empty interior. 
Let $E:=\{E(x)\}_{x \in P}$ be an algebraic product system. Suppose $\Gamma_0$ and $\Gamma_1$ are subsets of the set of  sections of $E$ such that $(E,\Gamma_0)$ and $(E,\Gamma_1)$ are product systems. Then, there exists a multiplicative map $\chi:P \to \bbt$ such that $\Gamma_1=\chi \Gamma_0$. 
\end{thm}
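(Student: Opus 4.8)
The plan is to realize both measurable structures concretely and then compare them through a single group of unitaries, exactly as in the proof of Proposition \ref{example}. Since $P$ has non-empty interior, $PP^{-1}$ is an open subgroup and we may assume $PP^{-1}=G$. By Theorem \ref{main_abelian} the product system $(E,\Gamma_0)$ is concrete; fix a measurable essential representation $\phi_0$ of $(E,\Gamma_0)$ on a separable Hilbert space $H_0$, and let $\alpha$ be the associated $E_0$-semigroup, so that $\phi_0(E(x))$ is the intertwining space of $\alpha_x$. The opposite $(E^{op},\Gamma_1)$ is again a product system over $P$ (its measurability is immediate from that of $(E,\Gamma_1)$ by interchanging the two variables), and it is concrete by Theorem \ref{main_abelian}; fix a measurable essential representation $\psi$ of $(E^{op},\Gamma_1)$ on a separable Hilbert space $H_1$, so that $\psi$ is an anti-multiplicative essential representation of $E$, measurable with respect to $\Gamma_1$, and let $\beta$ be its associated $E_0$-semigroup on $B(H_1)$.

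Next I would form the Arveson unitaries on $K:=H_0\otimes H_1$. For $x\in P$ and an orthonormal basis $\{e_i(x)\}$ of $E(x)$, set $U_x:=\sum_i\phi_0(e_i(x))\otimes\psi(e_i(x))^*$. As in Proposition \ref{example}, $U_x$ is a basis-independent unitary, the multiplicativity of $\phi_0$ together with the anti-multiplicativity of $\psi$ gives $U_xU_y=U_{xy}$, and one checks $U_x(A\otimes 1)U_x^*=\alpha_x(A)\otimes 1$ together with $U_x^*(1\otimes B)U_x=1\otimes\beta_x(B)$. Because $G$ is abelian the $U_x$ commute, so, using $PP^{-1}=G$, the assignment extends uniquely to a group homomorphism $G\ni g\mapsto U_g\in\mathcal U(K)$. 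Write $\gamma_g:=Ad(U_g)$.

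The heart of the argument --- and the step I expect to be hardest --- is to show that $\gamma=\{\gamma_g\}_{g\in G}$ is a weakly measurable $E_0$-semigroup over $G$, i.e. that $g\mapsto U_gTU_g^*$ is weakly measurable for every $T\in B(K)$. This is subtle: the field $x\mapsto U_x$ itself is typically \emph{not} weakly measurable, because $U_x$ couples the $\Gamma_0$-measurable field $\phi_0(e_i(\cdot))$ with the $\Gamma_1$-measurable field $\psi(e_i(\cdot))$ through a common orthonormal basis, and these two measurable structures are a priori unrelated; only after the phases cancel in $Ad(U_g)$ can one hope for measurability. On $P$ one has $\gamma_x|_{B(H_0)\otimes 1}=\alpha_x\otimes\mathrm{id}$, and on $P^{-1}$ one has $\gamma_{x^{-1}}|_{1\otimes B(H_1)}=\mathrm{id}\otimes\beta_x$, both measurable because $\alpha$ and $\beta$ are $E_0$-semigroups. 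The task is to propagate these two partial measurability statements to all of $B(K)$ and all of $G$; here I would use that $\{\gamma_{x^{-1}}(B(H_0)\otimes 1)\}_{x\in P}$ is an increasing family of von Neumann algebras whose union is weakly dense in $B(K)$ (equivalently, that the tail algebra $\bigcap_{x}\beta_x(B(H_1))$ is trivial), together with $G=PP^{-1}$ and the fact that $P$ has non-empty interior. Establishing this density is the main technical obstacle.

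Granting that $\gamma$ is a weakly measurable $E_0$-semigroup over the abelian group $G$, Lemma \ref{Wigner_kind} provides a homomorphism $\chi:G\to\bbt$ such that $g\mapsto\chi(g)U_g$ is weakly measurable, and it then remains to read off the character. Fixing a $\Gamma_0$-measurable orthonormal basis field $\{e_i\}$ of $E$ and multiplying the measurable field $x\mapsto\chi(x)U_x$ on the left by the $\Gamma_0$-measurable field $x\mapsto\phi_0(e_i(x))^*\otimes 1$ collapses it, via $\phi_0(e_i(x))^*\phi_0(e_j(x))=\delta_{ij}$, to $x\mapsto\chi(x)\,(1\otimes\psi(e_i(x))^*)$; hence $x\mapsto\overline{\chi(x)}\,\psi(e_i(x))$ is weakly measurable, which says precisely that $\overline{\chi}\,\Gamma_0\subseteq\Gamma_1$. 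Running the symmetric computation with a $\Gamma_1$-measurable basis field $\{f_i\}$ and multiplying on the right by $1\otimes\psi(f_i(x))$ shows that $x\mapsto\chi(x)\phi_0(f_i(x))$ is weakly measurable, giving $\chi\,\Gamma_1\subseteq\Gamma_0$. The two inclusions yield $\Gamma_1=\overline{\chi}\,\Gamma_0$, so that the restriction of $\overline{\chi}$ to $P$ is the desired multiplicative map $P\to\bbt$. Throughout I use the standard fact that, for an essential measurable representation, a section $s$ is measurable if and only if the operator field $x\mapsto\phi_0(s(x))$, respectively $x\mapsto\psi(s(x))$, is weakly measurable.
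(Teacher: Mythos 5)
Your overall route is the paper's route: produce measurable essential representations $\phi_0$ of $(E,\Gamma_0)$ and $\psi$ of $(E^{op},\Gamma_1)$ via Thm.~\ref{main_abelian}, form the basis-independent unitaries $U_x=\sum_i\phi_0(e_i(x))\otimes\psi(e_i(x))^*$ on $H_0\otimes H_1$, extend them to a homomorphism of $G$, apply Lemma \ref{Wigner_kind}, and read off the character by collapsing $\chi(x)U_x$ against the measurable orthonormal basis fields --- your final computation is exactly the one in the paper. The genuine gap is in the step you yourself single out as the main obstacle. You propose to establish weak measurability of $\gamma$ by proving that $\bigcup_{x\in P}\gamma_{-x}\bigl(B(H_0)\otimes 1\bigr)$ is weakly dense in $B(K)$, equivalently (as you correctly observe, by taking commutants) that the tail algebra $\bigcap_{x\in P}\beta_x\bigl(B(H_1)\bigr)$ is trivial. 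This is false in general, and it fails already in the simplest instance of the theorem: for the trivial product system $E(x)=\bbc$ over $P=[0,\infty)$, the representation $\psi$ is just a unitary representation $\{W_x\}_{x\in P}$, each $\beta_x=Ad(W_x)$ is an automorphism, so $\beta_x(B(H_1))=B(H_1)$ and the tail algebra is all of $B(H_1)$; more generally the tail is nontrivial whenever $\beta$ absorbs a tensor factor, e.g. when $\beta$ is cocycle conjugate to $\beta'\otimes \mathrm{id}$. Since nothing in the hypotheses lets you choose $\psi$ with trivial tail, your argument dead-ends precisely where the theorem should reduce to the classical statement that two measurable structures on the trivial system differ by a character.

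The missing idea is that no such density is needed; the propagation is done by translation, not by exhaustion. First, it suffices to check weak measurability of $x\mapsto\gamma_x(T)$ for $T=A\otimes 1$ and $T=1\otimes B$: products of bounded weakly measurable operator fields are weakly measurable (expand the middle inner product against an orthonormal basis), and bounded WOT-sequential limits preserve measurability, so elementary tensors, their finite sums, and then all of $B(K)$ follow. Next, fix a cofinal sequence $(s_n)$ in $P$, so that $G=\bigcup_n(P-s_n)=\bigcup_n(s_n-P)$. For $x\in P-s_n$ one has
\begin{equation*}
\gamma_x(A\otimes 1)=\gamma_{s_n}^{-1}\circ\gamma_{x+s_n}(A\otimes 1)=\widetilde{U}_{s_n}^{*}\bigl(\alpha_{x+s_n}(A)\otimes 1\bigr)\widetilde{U}_{s_n},
\end{equation*}
which is weakly measurable in $x$ because $\alpha$ is an $E_0$-semigroup over $P$ and the conjugation is by the \emph{single fixed} unitary $\widetilde{U}_{s_n}$; symmetrically, for $x\in s_n-P$, $\gamma_x(1\otimes B)=\widetilde{U}_{s_n}\bigl(1\otimes\beta_{s_n-x}(B)\bigr)\widetilde{U}_{s_n}^{*}$. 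Covering $G$ by these countably many Borel sets finishes the claim, with no hypothesis on tail algebras. Everything else in your proposal (including the correct caveat that $x\mapsto U_x$ itself need not be weakly measurable, and the two inclusions $\overline{\chi}\,\Gamma_0\subseteq\Gamma_1$, $\chi\,\Gamma_1\subseteq\Gamma_0$ yielding $\Gamma_1=\overline{\chi}\,\Gamma_0$) is sound and agrees with the paper up to replacing $\chi$ by $\overline{\chi}$.
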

\textit{Proof.} Without loss of generality, we can assume $P-P=G$. 
Let $\phi$ be an  essential representation of the product system $(E,\Gamma_0)$ on a separable Hilbert space $H$ such that $\phi$ is measurable.  Let $\psi$ be an essential representation of $(E^{op},\Gamma_1)$ on a separable Hilbert space $K$ such that $\psi$ is measurable. 
Thm. \ref{main_abelian} guarantees the existence of $\phi$ and $\psi$. For $x \in P$, let $d(x):=\dim E(x)$.
Let $e_1,e_2,\cdots$ be a countable family of sections in $\Gamma_0$ such that for each $x \in P$, $\{e_i(x)\}_{i=1}^{d(x)}$ is an orthonormal basis for $E(x)$. Let $f_1,f_2,\cdots$ be a countable family of sections in $\Gamma_1$ such that for each $x \in P$, $\{f_i(x)\}_{i=1}^{d(x)}$ is an orthonormal basis for $E(x)$. 

Let $\alpha:=\{\alpha_x\}_{x \in P}$ be the $E_0$-semigroup over $P$ on $B(H)$ determined by the representation $\phi$ of $(E,\Gamma_0)$, i.e. the endomorphism $\alpha_x$, for $x \in P$, is given by 
the formula 
\begin{equation}
\label{endo alpha}
\alpha_x(A)=\sum_{i=1}^{d(x)}\phi(e_i(x))A \phi(e_i(x))^{*}
\end{equation}
for $A \in B(H)$. 
Similalry, let $\beta:=\{\beta_x\}_{x \in P}$ be the $E_0$-semigroup over $P$ on $B(K)$ defined by 
\begin{equation}
\label{endo beta}
\beta_x(B)=\sum_{i=1}^{d(x)}\psi(f_i(x))B \psi(f_i(x))^{*}
\end{equation}
for $B \in B(K)$. 

 For $x \in P$, let $\{v_i(x)\}_{i=1}^{d(x)}$ be an orthonormal basis of $E(x)$, and set
\[
U_x:=\sum_{i=1}^{d(x)}\phi(v_i(x))\otimes \psi(v_i(x))^{*}.\]
The definition of $U_x$ does not depend on the choice of the  orthonormal basis. Hence, for $x,y\in P$,  \begin{equation}
\label{unitaries arveson}
U_{x+y}=U_xU_y.
\end{equation}

For $x \in G$, write $x=a-b$ with $a,b \in P$, and set $\widetilde{U}_x=U_{b}^{*}U_a$. Using Eq. \ref{unitaries arveson}, we can prove that $\widetilde{U}_x$ does not depend on the choice of $a$ and $b$. Hence, $\widetilde{U}_x$ is well defined. Again using Eq. \ref{unitaries arveson}, we can prove that $\{\widetilde{U}_x\}_{x \in G}$ satisfy the equation 
\begin{equation}
\label{unitaries arveson1}
\widetilde{U}_{x+y}=\widetilde{U}_x\widetilde{U}_y
\end{equation}
for $x,y \in G$. Also, $\widetilde{U}_x=U_x$ for $x \in P$. For $x \in G$, let $\gamma_x$ be the automorphism of $B(H \otimes K)$ defined by $\gamma_x:=Ad(\widetilde{U}_x)$. 
Note that for $x \in P$, $A \in B(H)$ and $B \in B(K)$, \begin{equation}
\label{pairing1}\gamma_x(A \otimes 1)=\alpha_x(A) \otimes 1\end{equation}
and \begin{equation}
\label{pairing2} \gamma_{-x}(1 \otimes B)=1 \otimes \beta_{x}(B).\end{equation}

\textit{Claim:} $\gamma:=\{\gamma_x\}_{x \in G}$ is an $E_0$-semigroup over $G$ on $B(H \otimes K)$. 

By definition, $\gamma$ is a group of automorphisms, and we only need to check the measurability axiom. 
 It suffices to check that for $A \in B(H)$ and $B \in B(K)$, the map
\[
G \ni x \to \gamma_x(A \otimes 1) \in B(H \otimes K)\]
and the map 
\[
G \ni x \to \gamma_x(1 \otimes B) \in B(H \otimes K)\]
are weakly measurable.

Let $(s_n)_{n=1}^{\infty}$ be a  cofinal sequence in $P$. Then, \[\bigcup_{n=1}^{\infty}(P-s_n)=G=\bigcup_{n=1}^{\infty}(-P+s_n).\] 
Let $A \in B(H)$ be given. Then, for $n \in \bbn$ and $x \in P-s_n$, 
\[
\gamma_x(A \otimes 1)=\gamma_{s_n}^{-1}\circ \gamma_{x+s_n}(A \otimes 1)=\gamma_{s_n}^{-1}(\alpha_{x+s_n}(A) \otimes 1)=\widetilde{U}_{s_n}^{*}(\alpha_{x+s_n}(A) \otimes 1)\widetilde{U}_{s_n}. \]
It follows from the above equation  that the map \[P-s_n \ni x \to \gamma_x(A \otimes 1) \in B(H \otimes K)\] is weakly measurable. Since $\{P-s_n: n \in \bbn\}$ covers $G$, it follows that the map $G \ni x \to \gamma_x(A \otimes 1) \in B(H \otimes K)$ is weakly measurable. 

Arguing in the same fashion by using  Eq. \ref{pairing2} and by noting that $\{s_n-P: n \geq 1\}$ is a measurable cover of $G$, we prove that the map 
\[
G \ni x \to \gamma_x(1 \otimes B) \in B(H \otimes K)\]
is weakly measurable for every $B \in B(K)$.  This proves the claim. 

 By Lemma \ref{Wigner_kind}, there exists a homomorphism $\chi:G \to \bbt$ such that $\chi U:=\{\chi(x)U_x\}_{x \in G}$ is weakly measurable. 
For $i \in \bbn$, let $T_i:P \to B(H \otimes K)$ and $R_i:P \to B(H \otimes K)$ be defined by \begin{align*}
T_i(x):&=\phi(e_i(x))\otimes 1,\\
R_i(x):&=1 \otimes \psi(f_i(x)).
\end{align*}
As the definition of $U_x$ does not depend on the choice of the orthonormal basis,  observe that 
 for $i,j  \in \bbn$ and $x \in P$, 
\begin{align*}
T_i(x)^{*}\chi(x)U_xR_j(x)&=\chi(x)  (\phi(e_i(x))^* \otimes 1)\Big(\sum_{k=1}^{\infty}\phi(e_k(x) \otimes \psi(e_k(x))^*\Big)(1 \otimes \psi(f_j(x)))\\
&=\chi(x) \langle e_i(x)|f_j(x) \rangle.
\end{align*}
Since $T_i,R_j$ and $\chi U$ are weakly measurable, the above equality implies the map 
\[
P \ni x \to \langle \chi(x)e_i(x)|f_j(x)\rangle =\langle e_i(x)|\overline{\chi(x)}f_j(x) \rangle \in \bbc\]
is measurable for every  $i,j$. Hence, $\chi e_i \in \Gamma_1$ for every $i$ and $\overline{\chi}f_j \in \Gamma_0$ for every $j$. Hence, $\Gamma_1=\chi \Gamma_0$.  This completes the proof. \hfill $\Box$

\begin{crlre}
Let $G$ be a locally compact abelian group, and let $P \subset G$ be a Borel subsemigroup with non-empty interior. 
For $i=1,2$, let $E_i$ be a product system over $P$ with $\Gamma_i$ being the space of measurable sections of $E_i$. Then, $E_1$ and $E_2$ are isomorphic as product systems if and only if they are isomorphic as algebraic product systems. 
\end{crlre}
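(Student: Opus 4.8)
The plan is to reduce the corollary directly to Theorem \ref{Liebscher}. The statement to prove is that two product systems $E_1$ and $E_2$ over $P$ are isomorphic as product systems (i.e. via a measurable field of unitaries intertwining the multiplication) if and only if they are isomorphic as algebraic product systems (i.e. via a field of unitaries intertwining the multiplication, with no measurability constraint). One direction is trivial: a product system isomorphism is in particular an algebraic isomorphism, so I only need to prove the nontrivial direction, that an algebraic isomorphism can be upgraded to a measurable one.

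First I would take an algebraic isomorphism $\{\theta_x\}_{x\in P}$ with $\theta_x:E_1(x)\to E_2(x)$ unitary and $\theta_{x+y}(uv)=\theta_x(u)\theta_y(v)$. The idea is to transport the measurable structure $\Gamma_2$ of $E_2$ back to the underlying field of $E_1$ via $\theta$. Concretely, set $\Gamma_2':=\{\theta^{-1}\circ s : s\in\Gamma_2\}$, meaning the sections $x\mapsto \theta_x^{-1}(s(x))$. Because each $\theta_x$ is unitary and $\theta$ is multiplicative, $(E_1,\Gamma_2')$ is again a product system over $P$ built on the \emph{same} algebraic product system $E_1$ as $(E_1,\Gamma_1)$: the inner-product-measurability condition (2) in the definition of a product system transfers verbatim since $\theta$ preserves inner products and multiplication. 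Thus I now have two measurable structures $\Gamma_1$ and $\Gamma_2'$ on one fixed algebraic product system $E_1$.

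Now I would apply Theorem \ref{Liebscher}: there exists a multiplicative map $\chi:P\to\bbt$ with $\Gamma_2'=\chi\,\Gamma_1$. The final step is to modify $\theta$ by this character to obtain a genuine product system isomorphism. Define $\widetilde\theta_x:=\overline{\chi(x)}\,\theta_x$ (or $\chi(x)\theta_x$, fixing the sign by a direct check). Each $\widetilde\theta_x$ is still unitary, and multiplicativity is preserved because $\chi$ is multiplicative: $\widetilde\theta_{x+y}(uv)=\overline{\chi(x+y)}\theta_{x+y}(uv)=\overline{\chi(x)}\,\overline{\chi(y)}\theta_x(u)\theta_y(v)=\widetilde\theta_x(u)\widetilde\theta_y(v)$. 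It remains to check that $\{\widetilde\theta_x\}$ is a measurable field, i.e. carries $\Gamma_1$ into $\Gamma_2$. For $s\in\Gamma_1$, the section $x\mapsto\widetilde\theta_x(s(x))=\theta_x(\overline{\chi(x)}s(x))$ equals $\theta$ applied to the section $\overline{\chi}\,s$; since $\Gamma_2'=\chi\Gamma_1$ means $\overline\chi\,\Gamma_2'=\Gamma_1$, one arranges the signs so that $\overline\chi\,s\in\Gamma_2'$, whence its image under $\theta$ lies in $\Gamma_2$. Hence $\widetilde\theta$ is a bona fide isomorphism of product systems.

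The only genuinely delicate point is bookkeeping: the statement of Theorem \ref{Liebscher} is phrased for two measurable structures on \emph{one} algebraic product system, so the crux is the transport step of recognizing $(E_1,\Gamma_2')$ as such a structure and then correctly matching the direction in which $\chi$ twists the sections. I expect the sign/direction convention ($\chi$ versus $\overline\chi$) to be the main place to be careful, but it is a routine verification rather than a real obstacle; no new estimates or constructions are needed beyond the already-proven theorem.
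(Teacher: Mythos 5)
Your proposal is correct and follows essentially the same route as the paper: the paper pushes $\Gamma_1$ forward via $\theta$ to get a second measurable structure $\theta(\Gamma_1)$ on $E_2$ and applies Thm. \ref{Liebscher} there, while you pull $\Gamma_2$ back to $E_1$ — the mirror image of the same reduction, followed by the identical character-twist of $\theta$. (With your conventions the twist resolving your flagged sign check is $\widetilde{\theta}_x=\chi(x)\theta_x$, since $\Gamma_2'=\chi\Gamma_1$ gives $\theta(\chi s)\in\Gamma_2$ for $s\in\Gamma_1$.)
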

\textit{Proof.} Suppose $E_1$ and $E_2$ are isomorphic as algebraic product systems. This means that there exists a field of unitary operators $\theta:=\{\theta_x\}_{x \in P}$ such that \[
\theta_{x+y}(uv)=\theta_x(u)\theta_y(v)\]
for $x,y \in P$, $u \in E_1(x)$ and $v \in E_2(x)$. 
Let $\widetilde{\Gamma}_2=\theta(\Gamma_1)$. Then, by Thm. \ref{Liebscher}, there exists a homomorphism $\chi:G \to \bbt$ such that $\widetilde{\Gamma}_2=\chi \Gamma_2$. For $x \in P$, let $\widetilde{\theta}_x=\overline{\chi(x)}\theta_x$. Then, $\widetilde{\theta}:=\{\widetilde{\theta}_x\}_{x \in P}$ is a measurable field of unitary operators such that for $x, y\in P$, $u \in E_1(x)$ and $v \in E_2(x)$, 
\[
\widetilde{\theta}_{x+y}(uv)=\widetilde{\theta}_x(u)\widetilde{\theta}_y(v).\]
Hence, $E_1$ and $E_2$ are isomorphic. The proof is complete. \hfill $\Box$

\bibliography{references}

\def\cprime{$'$} \def\cprime{$'$} \def\cprime{$'$}
\providecommand{\bysame}{\leavevmode\hbox to3em{\hrulefill}\thinspace}
\providecommand{\MR}{\relax\ifhmode\unskip\space\fi MR }
\providecommand{\MRhref}[2]{%
  \href{http://www.ams.org/mathscinet-getitem?mr=#1}{#2}
}
\providecommand{\href}[2]{#2}
\begin{thebibliography}{10}

\bibitem{Meyer}
Suliman Albandik and Ralf Meyer, \emph{Product systems over {O}re monoids},
  Doc. Math. \textbf{20} (2015), 1331--1402.

\bibitem{Alexi}
Alexis Alevras, \emph{One parameter semigroups of endomorphisms of factors of
  type {$\rm II_1$}}, J. Operator Theory \textbf{51} (2004), no.~1, 161--179.

\bibitem{Arveson_invitation}
William Arveson, \emph{An invitation to {$C\sp*$}-algebras}, Springer-Verlag,
  New York-Heidelberg, 1976, Graduate Texts in Mathematics, No. 39.

\bibitem{Arv_Fock}
\bysame, \emph{Continuous analogues of {F}ock space {I}}, Mem. Amer. Math. Soc.
  \textbf{80} (1989), no.~409.

\bibitem{Arv_Fock4}
\bysame, \emph{Continuous analogues of {F}ock space. {IV}. {E}ssential states},
  Acta Math. \textbf{164} (1990), no.~3-4, 265--300.

\bibitem{Arv_Spectral1990}
\bysame, \emph{The spectral {$C^*$}-algebra of an {$E_0$}-semigroup}, Operator
  theory: operator algebras and applications, {P}art 1 ({D}urham, {NH}, 1988),
  Proc. Sympos. Pure Math., vol. 51, Part 1, Amer. Math. Soc., Providence, RI,
  1990, pp.~1--15.

\bibitem{Arveson}
\bysame, \emph{Noncommutative dynamics and {$E$}-semigroups}, Springer
  Monographs in Mathematics, Springer-Verlag, New York, 2003.

\bibitem{Arv}
\bysame, \emph{On the existence of {$E_0$}-semigroups}, Infin. Dimens. Anal.
  Quantum Probab. Relat. Top. \textbf{9} (2006), no.~2, 315--320.

\bibitem{Li_Oberwolfach}
Joachim Cuntz, Siegfried Echterhoff, Xin Li, and Guoliang Yu,
  \emph{{$K$}-theory for group {$C^*$}-algebras and semigroup
  {$C^*$}-algebras}, Oberwolfach Seminars, vol.~47, Birkh\"{a}user/Springer,
  Cham, 2017.

\bibitem{Sims_Kumjian}
Valentin Deaconu, Alex Kumjian, David Pask, and Aidan Sims, \emph{Graphs of
  {$C^\ast$}-correspondences and {F}ell bundles}, Indiana Univ. Math. J.
  \textbf{59} (2010), no.~5, 1687--1735.

\bibitem{Dinh}
Hung~T. Dinh, \emph{Discrete product systems and their {$C^*$}-algebras}, J.
  Funct. Anal. \textbf{102} (1991), no.~1, 1--34.

\bibitem{Dinh_Ktheory}
\bysame, \emph{On generalized {C}untz {$C^*$}-algebras}, J. Operator Theory
  \textbf{30} (1993), no.~1, 123--135.

\bibitem{Edwardes}
Milo Edwardes and Daniel. Heath, \emph{A collection of cancellative right lcm,
  not group-embeddable monoids}, arxiv:2405.20197v1.

\bibitem{Folland}
Gerald~B. Folland, \emph{A {C}ourse in {A}bstract {H}armonic {A}nalysis},
  Studies in Advanced Mathematics, CRC Press, Boca Raton, FL, 1995.

\bibitem{Fowler_2002}
Neal~J. Fowler, \emph{Discrete product systems of {H}ilbert bimodules}, Pacific
  J. Math. \textbf{204} (2002), no.~2, 335--375.

\bibitem{Hollings}
Christopher Hollings, \emph{Embedding semigroups in groups: not as simple as it
  might seem}, Arch. Hist. Exact Sci. \textbf{68} (2014), no.~5, 641--692.

\bibitem{Kleppner}
Adam Kleppner, \emph{Multipliers on abelian groups}, Math. Ann. \textbf{158}
  (1965), 11--34.

\bibitem{Kumjian_higher}
Alex Kumjian and David Pask, \emph{Higher rank graph {$C^\ast$}-algebras}, New
  York J. Math. \textbf{6} (2000), 1--20.

\bibitem{Szymanski}
Bartosz~Kosma Kwa\'sniewski and Wojciech Szyma\'nski, \emph{Topological
  aperiodicity for product systems over semigroups of {O}re type}, J. Funct.
  Anal. \textbf{270} (2016), no.~9, 3453--3504.

\bibitem{Liebscher}
Volkmar Liebscher, \emph{Random sets and invariants for (type {II}) continuous
  tensor product systems of {H}ilbert spaces}, Mem. Amer. Math. Soc.
  \textbf{199} (2009), no.~930.

\bibitem{Malcev}
A.~Malcev, \emph{On the immersion of an algebraic ring into a field}, Math.
  Ann. \textbf{113} (1937), no.~1, 686--691.

\bibitem{Srinivasan_Oliver1}
Oliver~T. Margetts and R.~Srinivasan, \emph{Invariants for {${E}_0$}-semigroups
  on {${II}_1$} factors}, Comm. Math. Phys. \textbf{323} (2013), no.~3,
  1155--1184.

\bibitem{Murugan_sundar_continuous}
S.~P. Murugan and S.~Sundar, \emph{On the existence of {$E_0$}-semigroups---the
  multiparameter case}, Infin. Dimens. Anal. Quantum Probab. Relat. Top.
  \textbf{21} (2018), no.~2, 1850007, 20.

\bibitem{Murugan_sundar_discrete_mathsci}
\bysame, \emph{An essential representation for a product system over a finitely
  generated subsemigroup of {$\Bbb Z^d$}}, Proc. Indian Acad. Sci. Math. Sci.
  \textbf{129} (2019), no.~2.

\bibitem{Murugan_Sundar}
S.P. Murugan and S.~Sundar, \emph{${E}_{0}^{P}$-semigroups and {P}roduct
  systems}, arxiv/math.OA:1706.03928.

\bibitem{Pimsner}
Michael~V. Pimsner, \emph{A class of {$C^*$}-algebras generalizing both
  {C}untz-{K}rieger algebras and crossed products by {${\bf Z}$}}, Free
  probability theory ({W}aterloo, {ON}, 1995), Fields Inst. Commun., vol.~12,
  Amer. Math. Soc., Providence, RI, 1997, pp.~189--212.

\bibitem{Renault_Pgraph}
Jean~N. Renault and Dana~P. Williams, \emph{Amenability of groupoids arising
  from partial semigroup actions and topological higher rank graphs}, Trans.
  Amer. Math. Soc. \textbf{369} (2017), no.~4, 2255--2283.

\bibitem{Rennie_Sims}
Adam Rennie, David Robertson, and Aidan Sims, \emph{Groupoid {F}ell bundles for
  product systems over quasi-lattice ordered groups}, Math. Proc. Cambridge
  Philos. Soc. \textbf{163} (2017), no.~3, 561--580.

\bibitem{Skeide_Shalit}
Orr Shalit and Michael Skeide, \emph{{CP}-semigroups and {D}ilations,
  {S}ub{P}roduct systems and {S}uper{P}roduct systems: {T}he {M}ultiparameter
  {C}ase and {B}eyond}, arxiv/math.OA:2003.05166.

\bibitem{Yeend}
Aidan Sims and Trent Yeend, \emph{{$C^*$}-algebras associated to product
  systems of {H}ilbert bimodules}, J. Operator Theory \textbf{64} (2010),
  no.~2, 349--376.

\bibitem{Skeide1}
Michael Skeide, \emph{Existence of {$E_0$}-semigroups for {A}rveson systems:
  making two proofs into one}, Infin. Dimens. Anal. Quantum Probab. Relat. Top.
  \textbf{9} (2006), no.~3, 373--378.

\bibitem{Skeide}
\bysame, \emph{A simple proof of the fundamental theorem about {A}rveson
  systems}, Infin. Dimens. Anal. Quantum Probab. Relat. Top. \textbf{9} (2006),
  no.~2, 305--314.

\bibitem{Skeide4}
\bysame, \emph{Classification of {$E_0$}-semigroups by product systems}, Mem.
  Amer. Math. Soc. \textbf{240} (2016), no.~1137.

\end{thebibliography}
 \bibliographystyle{amsplain}

\vspace{2.5 mm}
\noindent
{\sc S. Sundar}
(\texttt{sundarsobers@gmail.com})\\
{\footnotesize Institute of Mathematical Sciences, \\
A CI of Homi Bhabha National Institute, \\ CIT Campus, 
Taramani, Chennai, 600113, \\ Tamilnadu, India.}\\

\end{document}